\documentclass[12pt,a4paper,oneside,final,reqno]{amsart}

\usepackage[backend=bibtex, style=numeric, maxnames=50]{biblatex}
\renewbibmacro{in:}{}

\allowdisplaybreaks

\usepackage[T1]{fontenc}
\usepackage[utf8]{inputenc}
\usepackage[dvipsnames]{xcolor}
\usepackage[english]{babel}
\usepackage{mathtools, amsthm, amssymb, amscd} % Maths
\usepackage{newtxtext, newtxmath} % Different fonts
\usepackage[DIV=11]{typearea} % DIV adjust margin size, DIV=12 is default(?)
\usepackage{hyperref} % Clickable reference (without the ugly border)
% [pagebackref=true], show backwards references
\usepackage[notref,notcite]{showkeys} % Showing tex-reference labels in the PDF (when in draft mode, only use in the arXiv version)
\usepackage{todonotes}
\usepackage{enumitem} % Customizing list labels
\usepackage{xypic} % commutative diagrams
\usepackage{tikz}
\usetikzlibrary{cd, decorations.markings, arrows} % Decorating arrows in graphs
% Use ``\smash'' command quite a lot to prevent extra lince spacing when large paranthesis occur (typically when there are paranthesis outside the unit space in the ``\G'')

\usepackage[capitalise]{cleveref}

\DeclareMathAlphabet{\mathcal}{OMS}{cmsy}{m}{n} % Reverts \mathcal to ``standard'' (to make O_2 look as it should...) 

%Fiddling with link colors:
\definecolor{DarkPurple}{rgb}{0.40,0.0,0.20}
\hypersetup{
	colorlinks =true,
	linkcolor= DarkPurple, %Plum
	citecolor = NavyBlue %MidnightBlue
	%urlcolor = NavyBlue,
	%filecolor=magenta,      
} % MidnightBlue, NavyBlue

\usetikzlibrary{cd, decorations.markings, arrows, matrix, calc}
%\counterwithin{figure}{section} % To number figures by section 2/2

\newcommand{\G}{\mathcal{G}}
\newcommand{\E}{\mathcal{E}}
\newcommand{\A}{\mathcal{A}}

\newcommand{\Fp}{F_{\lambda}^p}
\newcommand{\Fq}{F_{\lambda}^q}

\newcommand{\T}{\mathbb{T}}
\newcommand{\C}{\mathbb{C}}
\newcommand{\R}{\mathbb{R}}
\newcommand{\Z}{\mathbb{Z}}
\newcommand{\N}{\mathbb{N}}

\newcommand{\Cred}{C_{r}^{*}}

\DeclareMathOperator{\supp}{supp}

\newtheorem{lemma}{Lemma}[section]
\newtheorem{corollary}[lemma]{Corollary}
\newtheorem{theorem}[lemma]{Theorem}
\newtheorem*{theorem*}{Theorem}
\newtheorem{proposition}[lemma]{Proposition}
\newtheorem{introtheorem}{Theorem}

\theoremstyle{definition}
\newtheorem{definition}[lemma]{Definition}
\newtheorem{example}[lemma]{Example}
\newtheorem{remark}[lemma]{Remark}
\newtheorem{question}[lemma]{Question}

\addbibresource{DocumentBibV3.bib}

\title[]{Polynomial growth and property $RD_p$ for étale groupoids with applications to $K$-theory}
\date{}

\author[1]{Are Austad, Eduard Ortega, Mathias Palmstrøm}

\address{Department of Mathematics and Computer Science, Faculty of Science, SDU Odense -- University of Southern Denmark, Odense, Denmark}
\email{are@sdu.dk}

\address{Department of Mathematical Sciences, Faculty of Information Technology and Electrical Engineering, NTNU -- Norwegian University of Science and Technology, Trondheim, Norway}
\email{eduard.ortega@ntnu.no}

\address{Department of Mathematical Sciences, Faculty of Information Technology and Electrical Engineering, NTNU -- Norwegian University of Science and Technology, Trondheim, Norway}
\email{mathias.palmstrom@ntnu.no}

\keywords{}

\numberwithin{equation}{section} % Nummerer ligninger etter seksjon istedenfor globalt

\begin{document}
	\renewcommand{\thefootnote}{}
	\footnotetext{
		\textit{MSC 2020 classification: 46L80, 47L10, 46L87 }}
	
	\begin{abstract}
		We investigate property $RD_p$ for étale groupoids and apply it to the computation of the $K$-theory of reduced groupoid $L^p$-operator algebras. In particular, under the assumption of polynomial growth, we show that the $K$-theory groups for a reduced groupoid $L^p$-operator algebra are independent of $p\in (1, \infty)$. We apply the results to coarse groupoids and graph groupoids. 
	\end{abstract}
\maketitle

%------
% INSERT THE BODY OF THE PAPER HERE (except
% acknowledgments, funding info and bibliography)
%------
\section{Introduction}
Computing $K$-theory is a central problem in the study of operator algebras. For $C^*$-algebras one of the most powerful tools is the Baum-Connes assembly map  
\begin{equation*}
	\mu_2:K^{\mathrm{top}}(\Gamma)\to K_*(C^*_r(\Gamma)) ,
\end{equation*}
where $K^{\mathrm{top}}(\Gamma)$ is the $K$-theory of the classifying space for proper actions of $\Gamma$ and $K_*(C^*_r(\Gamma))$ is the $K$-theory of the reduced group $C^*$-algebra. Groups for which the map $\mu_2$ is an isomorphism are said to satisfy the Baum-Connes conjecture,  
of which groups with the Haagerup property and Gromov hyperbolic groups are among the most important examples. See \cite{SurveyBC} and references therein for a nice survey on the Baum-Connes conjecture. The assembly map has been extended from groups to actions of groups on $C^*$-algebras (Baum-Connes assembly map with coefficients) \cite{BCH}, to groupoids \cite{TuBaumConnes}
and for metric embeddings into Hilbert spaces \cite{Yu00}. 
But while the Baum-Connes conjecture for groups is still open, there are known counterexamples in the groupoid case \cite{HLS}. 
For Banach algebras there exists also a Baum-Connes assembly map due to Lafforgue 
\begin{equation*}
	\mu_\mathcal{A}:K^{\mathrm{top}}(\Gamma)\to K_*(\mathcal{A}(\Gamma)) ,
\end{equation*}
where $\mathcal{A}(\Gamma)$ is any Banach algebra that is an unconditional completion of $C_c(\Gamma)$, for example $L^1(\Gamma)$ \cite{Lafforgue1}. This map was shown  
to be an isomorphism when the group $\Gamma$ belongs to a large class $\mathcal{C}'$, called the Lafforgue class, which 
includes  
hyperbolic groups and semi-simple real Lie groups. The assembly map for unconditional completions of groupoids has been also studied by Paravicini \cite{Paravicini}. 
There is then the question of when the $K$-groups of $C^*_r(\Gamma)$ and $\mathcal{A}(\Gamma)$ are isomorphic. Lafforgue proved in \cite{Lafforgue2} that for discrete groups satisfying the property of rapid decay, which we will denote by property $RD$,  
there exists a unconditional completion of $C_c(\Gamma)$ that is contained in $C^*_r(\Gamma)$  and stable under holomorphic functional calculus. Then, the result of Connes \cite[Appendix C, Proposition 3]{NoncommutativeGeomConnes} shows that their $K$-theories are isomorphic. Property $RD$ for discrete groups was first established for free groups by Haagerup in \cite{HaagerupAnExample...} and properly introduced and studied by Jolissaint in \cite{JolissaintRD}, who verified it for groups with polynomial growth and hyperbolic groups. See \cite{SurveyOnRDChatterji} for a nice exposition about the RD property for discrete groups. The rapid decay property for actions of locally compact groups acting on $L^p$-spaces was introduced by Liao and Yu in \cite{kTheoryOfBAandRD} and for étale groupoids by Hou in \cite{hou2017spectral}. Properties similar to RD has also been studied in the case of reduced crossed products of discrete groups  
by Christensen in \cite{christensen2021DynamicalRapidDecay}, Chen and Wei in \cite{ChenWeiSRDandCrossedProducts}, and Ji and Schweitzer in \cite{JiSchweitzerSpectralInvarianceCrossedProducts}. 

Recently, the class of Banach algebras of $L^p$-operator algebras has attracted a lot of attention \cite{CGTRigidityResultsForLpOp,Chung&Li,GardellaModernLookofLp,KTheoryCuntzPhillips,Zhang&ZhouLp}. These are Banach algebras which admit an isometric representation on an $L^p$-space, and therefore generalize $C^*$-algebras to non-selfadjoint closed subalgebras of bounded operators on $L^p$-spaces. 
Analogously to $C^*_r(\Gamma)$ one can define the reduced group $L^p$-operator algebra $F^p_\lambda(\Gamma)$ for a discrete group $\Gamma$, as the Banach subalgebra of $B(\ell^p(\Gamma))$  generated by the image of the left regular representation of $\Gamma$, so in particular $C^*_r(\Gamma)=F^2_\lambda(\Gamma)$.  

In order to compute the K-groups of $\Fp(\Gamma)$, in \cite{kTheoryOfBAandRD} it was announced that Kasparov and Yu had defined the $L^p$ version of the Baum-Connes assembly map, for $p\in [1,\infty)$, and it was proved that for groups in the Lafforgue class satisfying property $RD_p$, $K_*(F^p_\lambda(\Gamma))$ is independent of $p \in [1, \infty)$. 
In particular, they are all isomorphic to $K_*(C^*_r(\Gamma))$ and consequently isomorphic to $K^{\mathrm{top}}(\Gamma)$. The results of Liao and Yu hinted at the possibility that the $K$-theory of $L^p$-operator algebras is independent of $p$. This was previously pointed out by Phillips when he computed the $K$-theory of the $L^p$-analog of the Cuntz algebras and the UHF algebras \cite{KTheoryCuntzPhillips}. In this paper, we aim  
to prove an analogous result of Liao and Yu \cite[Theorem 1.5]{kTheoryOfBAandRD} but for 
the $L^p$-operator algebra $F^p_\lambda(\mathcal{G})$ associated to an étale groupoid $\mathcal{G}$.
That is, we will show that for certain classes of étale groupoids the groups $K_*(F^p_\lambda(\mathcal{G}))$ are independent of $p$. 

The added complexity of multiple units and general lack of symmetry in the groupoid setting forces us to attack the problem differently from Liao and Yu.
Our strategy to is to consider 
property $RD$ for étale groupoids, with respect a length function $l$, defined by Hou \cite[Definition 3.2]{hou2017spectral} and extend it to $L^p$-operator algebras by defining property $RD_p$.  
For $p,q\in (1,\infty)$ 
such that $\frac{1}{p}+\frac{1}{q}=1$, we show that whenever a groupoid $\G$  satisfies properties $RD_p$  and $RD_q$ we can construct a Fréchet subalgebra $S^l_p(\G)$ of both
$F^p_\lambda(\G)$ and $F^q_\lambda(\G)$ which is stable under holomorphic functional calculus. This allows us to prove our first main result.

\begin{introtheorem}[cf. \ref{thm: RD_q and RD_p implies isomorphisms in K-theory}]
	Let $\G$ be an étale groupoid endowed with a continuous length function for which it has property $RD_p$ and $RD_q$, where $p,q \in (1, \infty)$ are Hölder conjugate. Then   
	\begin{equation*}
		K_*(F^p_\lambda(\G))\cong K_*(S^l_p(\G)) \cong K_*(F^q_\lambda(\G)) .
	\end{equation*}
\end{introtheorem}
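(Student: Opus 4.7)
The plan is to reduce the theorem to an application of the classical holomorphic functional calculus argument of Connes, with the Fréchet subalgebra $S^l_p(\G)$ serving as the bridge between $F^p_\lambda(\G)$ and $F^q_\lambda(\G)$. I would organize the argument in three stages: establishing continuous inclusions into both ambient Banach algebras, verifying density, and then confirming spectral invariance (equivalently, stability under holomorphic functional calculus).

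The first stage uses property $RD_p$ and $RD_q$ directly. By the definition of $RD_p$ with respect to the length function $l$, there exist $s_0 \geq 0$ and a constant $C > 0$ such that for every $f \in C_c(\G)$ supported in the ball of $l$-radius $R$ one has $\|\lambda_p(f)\| \leq C (1+R)^{s_0}\|f\|_{\ell,s}$ for a suitable Sobolev-type seminorm built from $l$. Summing geometric tails over annuli in $\G$ with sufficiently large Sobolev exponent $s$ yields a continuous inclusion $S^l_p(\G) \hookrightarrow F^p_\lambda(\G)$; the identical argument using $RD_q$ produces a continuous inclusion $S^l_p(\G) \hookrightarrow F^q_\lambda(\G)$. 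Since $C_c(\G)$ sits inside $S^l_p(\G)$ and is dense in both $F^p_\lambda(\G)$ and $F^q_\lambda(\G)$ by definition of the reduced $L^p$-operator algebras, $S^l_p(\G)$ is dense in both.

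The main obstacle is stage three: showing that $S^l_p(\G)$ is stable under holomorphic functional calculus as a subalgebra of $F^p_\lambda(\G)$ (and, by the same argument, of $F^q_\lambda(\G)$). The standard route is to verify spectral invariance, which in turn follows from a submultiplicativity estimate of the form
\begin{equation*}
\|fg\|_{l,s} \leq K_s \, \|f\|_{l,s}\, \|g\|_{l,s}
\end{equation*}
on the Sobolev seminorms defining $S^l_p(\G)$, together with a Neumann-series argument: if $1-f$ is invertible in the (unitization of) $F^p_\lambda(\G)$ with inverse $\sum_{n} f^n$, the $RD_p$ inequality will control the Sobolev seminorms of the partial sums, giving summability at every Sobolev level. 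The length function's subadditivity $l(\gamma\eta) \leq l(\gamma) + l(\eta)$ on composable arrows is what makes this convolution estimate possible in the groupoid setting; the extra care compared to the group case is that one must work with units and the range/source fibers separately, which is precisely what the groupoid $RD_p$ formulation is tailored to handle. Once spectral invariance is in hand, a standard functional calculus argument (see \cite[Appendix C, Proposition 3]{NoncommutativeGeomConnes}) upgrades it to stability under holomorphic functional calculus.

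Finally, once $S^l_p(\G) \hookrightarrow F^p_\lambda(\G)$ and $S^l_p(\G) \hookrightarrow F^q_\lambda(\G)$ are both continuous, dense, and stable under holomorphic functional calculus, Connes' theorem yields isomorphisms $K_*(S^l_p(\G)) \cong K_*(F^p_\lambda(\G))$ and $K_*(S^l_p(\G)) \cong K_*(F^q_\lambda(\G))$, and composing these gives the desired chain of isomorphisms. I expect that most of the real work will go into establishing the submultiplicativity and Neumann-series estimates on $S^l_p(\G)$; the rest is formal once the ingredients from property $RD_p$, $RD_q$, and the length function are available.
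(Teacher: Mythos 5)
Your overall architecture (dense Fréchet subalgebra, spectral invariance, then Connes' holomorphic functional calculus argument) matches the paper's, and stages one and two are fine. The genuine gap is in stage three. First, the submultiplicativity estimate $\lVert f \ast g \rVert_{l,s} \leq K_s \lVert f \rVert_{l,s} \lVert g \rVert_{l,s}$ you posit is not available: what property $RD_p$ actually yields (Proposition \ref{prop: Space of rapidly decreasing functions form a Fréchet algebra}) is an estimate with a shift in the seminorm index, $\lVert f \ast g \rVert_{p,n} \leq c \lVert f \rVert_{p,n+k} \lVert g \rVert_{p,n+k}$, where $k$ is the $RD_p$ exponent. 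Iterating this on powers gives $\lVert f^m \rVert_{p,n} \lesssim c^{m-1} \prod_{j} \lVert f \rVert_{p,n+jk}$, which involves ever higher seminorms and yields no summability, so the Neumann-series argument does not close. Second, even granting convergence issues, $1-f$ being invertible in $F^p_\lambda(\G)$ does not mean its inverse is $\sum_n f^n$; spectral invariance requires handling \emph{all} invertible elements, not only those in the radius-of-convergence regime, and openness-of-invertibles arguments of this type do not upgrade to spectral invariance.

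The paper's actual mechanism is different and worth internalizing: for each unit $u$ one forms the closed unbounded derivation $\delta_{u,p} = [M_l, \cdot]$ on the closure $B^p[\G_u]$ of the finite-propagation operators (Lemma \ref{lem: derivation at u is closed}), and invokes Ji's theorem (Lemma \ref{lem: dense spectral invariant Frechet algebra, Ji}) that $\bigcap_k \mathrm{Dom}(\delta_{u,p}^k) \cap A_{u,p}$ is spectral invariant in $A_{u,p}$ once it is dense --- density being supplied by $RD_p$ via the estimate $\lVert \delta_{u,p}^k(\lambda_{u,p}(f)) \rVert \leq c \lVert f \rVert_{p,k+k'}$. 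If $a \in S^l_p(\G)$ has inverse $b$ in $F^p_\lambda(\G)$, this fiberwise spectral invariance gives decay of $b$ along each source fiber, $\sum_{x \in \G_u} |b(x)|^p l(x)^{pk} < \infty$; but the $I^p$-norm also involves the range fibers $\G^u$, and this is where both exponents are genuinely needed: one applies the same argument to $a^{\ast}$ and $b^{\ast}$ in $F^q_\lambda(\G)$, acting on $\ell^q(\G_u)$, to control $\sum_{x \in \G^u} |b(x)|^q l(x)^{qk}$, and then uses $p \geq q$ to convert this into the $p$-summability needed. Uniformity over $u$ is extracted by an induction exploiting the Leibniz rule for $\delta_{u,p}^k$ applied to $\lambda_{u,p}(a)\lambda_{u,p}(b) = 1$. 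Your remark that one ``must work with range/source fibers separately'' gestures at this, but the $p$--$q$ duality via the involution is the essential ingredient your sketch is missing, and without it (or the derivation machinery) the spectral invariance step does not go through.
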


Of special interest when studying property $RD_p$ are groupoids of polynomial growth, which we treat in Section \ref{sec: Length Functions Rapid Decay and Polynomial Growth}. It turns out that groupoids of polynomial growth have property $RD_p$ for all $p \in [1,\infty)$, which leads us to our second main result of the paper. 

\begin{introtheorem}[cf. \ref{thm: Polynomial growth implies all K-groups are isomorphic}] 
	Let $\G$ be an étale groupoid endowed with a continuous length function for which it has polynomial growth. Then the groups $ K_{\ast}(F_{\lambda}^{p}(\G))$, for $p \in (1,\infty)$, are all isomorphic.
\end{introtheorem}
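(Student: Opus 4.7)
The plan is to deduce Theorem B from Theorem A after showing that polynomial growth is strong enough to guarantee property $RD_p$ for every $p \in [1,\infty)$ simultaneously, and, crucially, that the Fréchet algebra of Theorem A can then be chosen independent of $p$.

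First, I would establish that polynomial growth with respect to a continuous length function $l$ implies $RD_p$ for every $p \in [1,\infty)$. For $f \in C_c(\G)$ with $\supp(f) \subseteq \{g : l(g) \leq R\}$, the $F^p_\lambda(\G)$-norm is dominated by the maximum row and column sum of the convolution operator on $\ell^p(\G^x)$, and polynomial growth bounds each such sum by $C(1+R)^d \|f\|_\infty$ uniformly in $x$ and in $p$. Weighting by $(1+l)^k$ then delivers the $RD_p$-inequality with constants independent of $p$.

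Second, for every Hölder conjugate pair $p,q \in (1,\infty)$, Theorem A produces a Fréchet subalgebra $S^l_p(\G) \subseteq F^p_\lambda(\G) \cap F^q_\lambda(\G)$ with isomorphic $K$-theory. To connect arbitrary $p, r \in (1,\infty)$ --- and not merely Hölder conjugate pairs --- I would exhibit a single, $p$-independent Schwartz algebra
\begin{equation*}
S^l(\G) := \bigl\{ f : \sigma_k(f) := \sup_{x \in \G^{(0)}} \|(1+l)^k f\|_{\ell^2(\G^x)} < \infty \text{ for all } k \in \N \bigr\} ,
\end{equation*}
and argue that it replaces each $S^l_p(\G)$. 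The uniform $RD_p$ estimates of the previous step make $S^l(\G) \hookrightarrow F^r_\lambda(\G)$ a continuous inclusion for \emph{every} $r \in (1,\infty)$, and rerunning the proof of Theorem A at each conjugate pair $(r, r')$ shows $S^l(\G)$ is spectrally invariant in each $F^r_\lambda(\G)$. Then \cite[Appendix C, Proposition 3]{NoncommutativeGeomConnes} yields $K_*(S^l(\G)) \cong K_*(F^r_\lambda(\G))$ for every $r \in (1,\infty)$, and the theorem follows.

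The main obstacle is verifying that a \emph{single} algebra $S^l(\G)$ is holomorphically closed in every $F^p_\lambda(\G)$ simultaneously. A priori the spectral radius depends on the ambient Banach algebra, so spectral invariance must be checked separately at each $p$. Polynomial growth is precisely what uncouples the argument from $p$: the weighted seminorms $\sigma_k$ dominate every $F^p_\lambda$-operator norm uniformly, so that one functional calculus serves every $p \in (1,\infty)$.
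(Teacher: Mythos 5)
Your proposal is correct and follows essentially the same route as the paper: first show polynomial growth gives $RD_p$ for all $p\in[1,\infty)$, then invoke Theorem A at each Hölder-conjugate pair, and finally observe that polynomial growth plus Hölder's inequality collapses all the Fréchet algebras $S^l_p(\G)$ to a single $p$-independent one (the paper identifies them all with $S^l_1(\G)$, you with an $\ell^2$-based version; under polynomial growth these coincide). The only cosmetic caveat is that your $\sigma_k$ should take the maximum over both source and range fibers, as in the paper's $I^p$-norms, so that the algebra is $\ast$-closed and the two-sided estimates in the spectral-invariance argument go through --- though under polynomial growth the one-sided and two-sided conditions in fact define the same space.
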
	

Polynomial growth of an étale groupoid can sometimes be read off from an analogous property in the setting the groupoid arises. This is in particular the case for a coarse groupoid associated to a uniformly locally finite extended metric space $(X,d)$. In this case, polynomial growth of the extended metric space will imply polynomial growth of the coarse groupoid. Applying Theorem \ref{thm: Polynomial growth implies all K-groups are isomorphic} in this case, we prove 
the following.
\begin{introtheorem}[cf. \ref{corollary:roe-algebra-k-theory-results}]
	Let $(X,d)$ be a uniformly locally finite extended metric space. 
	Denote by $ \overline{B}(x,r)$ the closed $r$-ball with center $x$, and let $B^p_u (X,d)$ denote the uniform $L^p$-Roe algebra of $(X,d)$. If there is a polynomial $f$ such that $\vert \overline{B}(x,r)\vert \leq f(r)$ for all $x \in X$ and all $r \geq 0$, then the $K$-theory groups $K_* (B^p_u (X,d))$,  for $*=0,1$ are independent of $p \in (1,\infty)$.
\end{introtheorem}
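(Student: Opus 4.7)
The plan is to reduce the statement to Theorem~B applied to the coarse groupoid of $(X,d)$. The first step is to recall the identification
\begin{equation*}
B^p_u(X,d) \;\cong\; \Fp(\G(X)),
\end{equation*}
where $\G(X)$ denotes the coarse groupoid of $(X,d)$. Both algebras arise as completions of the algebra of finite-propagation operators on $\ell^p(X)$ inside $B(\ell^p(X))$: compact open bisections of $\G(X)$ correspond precisely to the partial translations generating finite-propagation operators, and this identification extends to an isomorphism of $L^p$-operator algebras.

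Next, equip $\G(X)$ with the continuous length function $l(g) := d(r(g), s(g))$, where $r$ and $s$ are the range and source maps of the groupoid. This is well-defined and continuous because compact open subsets of $\G(X)$ are contained in closures of entourages $E_R = \{(x,y) \in X\times X : d(x,y) \le R\}$, on which the distance takes only finitely many values thanks to uniform local finiteness of $(X,d)$.

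The main step is to verify that $(\G(X), l)$ has polynomial growth. For any $x \in X$, the source fibre satisfies
\begin{equation*}
\bigl| \{ g \in \G(X) : s(g) = x,\ l(g) \le r \} \bigr| \;=\; |\overline{B}(x,r)| \;\le\; f(r),
\end{equation*}
and a symmetric bound holds for the range fibres, uniformly in $x$. Combined with uniform local finiteness, this yields the polynomial growth condition on the étale groupoid $\G(X)$ as formulated in the paper.

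Applying Theorem~B to $(\G(X), l)$ then gives that $K_*(\Fp(\G(X))) \cong K_*(B^p_u(X,d))$ is independent of $p \in (1,\infty)$, which is the desired conclusion. The main obstacle I anticipate is translating the pointwise metric ball estimate $|\overline{B}(x,r)| \le f(r)$ into the precise polynomial growth condition for étale groupoids used in Theorem~B, in particular ensuring uniform control across compact open bisections; once the correspondence between bisections of bounded propagation and entourages in $X \times X$ is pinned down, the rest of the argument should be bookkeeping.
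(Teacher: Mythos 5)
Your overall route — identify $B^p_u(X,d)$ with $\Fp(\G_{(X,d)})$ for the coarse groupoid, check polynomial growth of that groupoid, and invoke Theorem~B — is exactly the paper's strategy. But there is a genuine gap in the growth verification. The unit space of the coarse groupoid is $\beta X$, not $X$, and Definition~\ref{def: polynomial growth} requires the cardinality bound $\sup_{u\in\G^{(0)}}|B_{\G_u}(r)|\le c(1+r)^{D}$ uniformly over \emph{all} units, including the corona points $\chi\in\beta X\setminus X$. Your verification only treats source fibres over $x\in X$, where the fibre is literally the ball $\overline{B}(x,r)$ and the bound $f(r)$ is immediate. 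Over a corona point $\chi$, the fibre of $\overline{E_r}$ consists of limits of nets in $E_r$, and its cardinality is \emph{not} controlled by $\sup_x|\overline{B}(x,r)|$ in any obvious pointwise way; there is no continuity or semicontinuity of $\chi\mapsto|(\overline{E_r})_\chi|$ that lets you pass from $X$ to $\beta X$ by density. The paper closes this gap with the graph decomposition lemma (Roe's Lemma~4.10, quantified in Proposition~\ref{prop:graph-decomposition}): a controlled set $E$ with $N:=\sup_x\max\{|E_x|,|E^x|\}$ decomposes into at most $2N(N-1)+1$ orthogonal graphs of partial translations, each of whose closures meets any fibre in at most one point. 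This gives $|(\overline{E_r})_\chi|\lesssim f(r)^2$ for corona points (Proposition~\ref{prop:cardinalities-of-sections-in-coarse-groupoids}) — note the bound is quadratically worse than on $X$, which is harmless for polynomial growth but shows that the estimate genuinely does not transfer verbatim. You flagged "uniform control across compact open bisections" as a potential obstacle, but you neither identified that the problem lives precisely at $\beta X\setminus X$ nor supplied the decomposition argument that resolves it, so as written the key step is missing.

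Two smaller points. First, your length function $l(g)=d(r(g),s(g))$ needs the range and source to be evaluated in $\beta X$, so you must first extend $d$ to an extended metric $\beta d$ taking finite values on $\G_{(X,d)}$ and check continuity of the induced length function; the paper does this in Lemma~\ref{lemma:coarse-groupoid-metric-length-observations} following Ma--Wu. Second, the identification of $B^p_u(X,d)$ with $\Fp(\G_{(X,d)})$ is obtained in the paper only as a contractive bijective homomorphism (the $L^p$-analogue of Roe's Propositions~10.28--10.29), which still suffices for the $K$-theory conclusion; asserting an isometric isomorphism is stronger than what is justified, though this does not affect the argument.
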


Another relevant example of étale groupoids are those arising from directed graphs \cite{KPRR}. Graph groupoids produce nice classes of algebras which includes the Toeplitz algebra, Cuntz-Krieger algebras and the algebra of continuous functions on the quantum lens space \cite{HongSzy}. We give a combinatorial condition on the graph that exactly determines when the graph groupoid has polynomial growth (Proposition \ref{Growth_Graphs}). 

The paper is structured as follows. In Section \ref{sec: preliminaries}, we recall basic results on $K$-theory for Fréchet algebras, étale groupoids, and reduced groupoid $L^p$-operator algebras. We then define property $RD_p$ and polynomial growth for étale groupoids and derive some basic results and permanence properties concerning these in Section \ref{sec: Length Functions Rapid Decay and Polynomial Growth}. In Section \ref{sec: applications of polynomial growth and RDp}, we apply property $RD_p$ and polynomial growth to derive $K$-theoretic results. This is done by constructing spectral invariant dense Fréchet subalgebras of the reduced groupoid $L^p$-operator algebras. In Section \ref{sec:Examples}, we apply the results of Section \ref{sec: applications of polynomial growth and RDp} to specific examples. Of note are the reduced $L^p$-operator algebras associated to coarse groupoids of uniformly locally finite coarse spaces and to groupoids associated to finite directed graphs. Lastly, in Appendix \ref{appendix:twisted-case} we indicate how to extend  the main results to the setting of $2$-cocycle twisted reduced groupoid $L^p$-operator algebras.

\section{Preliminaries} \label{sec: preliminaries}

\subsection{$K$-theory for Fréchet Algebras}
A \emph{Fréchet space} is a locally convex space which satisfies the following three conditions:
\begin{itemize}
	\item it is a Hausdorff space;
	\item its topology is induced by a countable family of semi-norms $\{ \lVert \cdot \rVert_{k} \}_{k \in \N_0} $, where $\N_0 = \{0\} \cup \N$;
	\item it is complete with respect to the family of semi-norms $\{ \lVert \cdot \rVert_k \}_{k \in \N_0}$.
\end{itemize} 
In a Fréchet space, a sequence $\{ f_n \}_{n \in \N}$ converges to $f $ if and only if $f_n$ converges to $f$ with respect to each semi-norm $\lVert \cdot \rVert_k $, $k \in \N_0$. We can, and will, always assume that the countable family of semi-norms are increasing.  Moreover, we shall only consider Fréchet spaces arising from a countable collection of norms. 

By a \emph{Fréchet algebra}, we mean an associative $\C$-algebra which is also a Fréchet space and such that the multiplication is jointly continuous. Since we are assuming that the countable family of norms $\{ \lVert \cdot \rVert_n \}_{n \in \N_0}$ is an increasing family, the multiplication is jointly continuous if and only if for each $n \in \N_0$ there exist $C_n > 0$ and $m \geq n $, such that $\lVert ab \rVert_{n} \leq C_n \lVert a \rVert_m \lVert b \rVert_m $, for all $a,b \in \A $. In what follows, we shall define the $K$-groups associated to Fréchet algebras whose underlying Fréchet space arises from a countable family of norms. The definition we shall employ is Definition 7.1 from \cite{KTheoryFrechetPhillips}, wherein Phillips defines the K-groups for Fréchet algebras arising from a countable family of \emph{sub-multiplicative} semi-norms. However, \cite[Definition 7.1]{KTheoryFrechetPhillips} makes sense in our case as well. 

To be able to state this definition, we need some preparations. Let $\A$ be a Fréchet algebra whose Fréchet topology is induced by a countable family of norms $\{ \lVert \cdot \rVert_{k} \}_{k \in \N}$. If $\A$ is not unital, one may adjoin a unit in the same manner as one does for a Banach algebra to obtain a unital Fréchet algebra $\tilde{\A} = \C \oplus \A $, whose topology is induced by the countable family of norms $\{ \lVert \cdot \rVert_{k, \sim} \}_{n \in \N} $, where $\lVert (\mu,a) \rVert_{k,\sim} := | \mu | + \lVert a \rVert_k $, for $(\mu, a) \in \tilde{A}$. Also, for any $n \in \N$, the matrix algebra $M_n (\A)$ is naturally a Fréchet algebra via the countable family of norms $\{ \lVert \cdot \rVert_{k,n} \}_{k \in \N_0} $, given by $\lVert a \rVert_{k,n} := \sum_{i,j = 1}^{n} \lVert a_{i,j} \rVert_{k} $. Using the embeddings $a \mapsto \text{diag}(a,0)$, we define $M_{\infty}(\A)$ as the algebraic direct limit of the matrix algebras $M_n (\A)$.

\begin{definition} \label{def: definition of the K-groups for Fréchet algebras}
	Let $\A$ be a unital Fréchet algebra. We define $K_0 (\A)$ as the Grothendieck group of the semigroup of algebraic equivalence classes of idempotents in $M_{\infty} (\A)$. Using the embeddings $u \mapsto \text{diag}(u,1) $, we define $K_1 (\A) = \varinjlim GL_n (\A)/GL_n (\A)_0 $, where the $GL_n (\A)$ are the invertible matrices in the Fréchet algebra $M_n (\A)$ endowed with the induced topology, and $GL_n (\A)_0$ is the normal subgroup given by the path component of the identity. If $\A$ is not unital, $K_{\ast}(\A)$ is defined to be the kernel of the naturally induced map from $K_{\ast}(\tilde{\A})$ to $K_{\ast}(\C)$.
\end{definition}
When $\A$ is a Banach algebra, the above defines its usual $K$-groups. The following lemma is a special case of \cite[Lemma 1.2]{SchweitzerSpectralInvarianceAndHolomorphicFuncCalc}. Recall that a Fréchet subalgebra $\A$ of a Banach algebra $A$ is spectral invariant if the invertible elements of $\tilde{\A}$ are precisely those elements of $\tilde{\A}$ which are invertible in $\tilde{A}$.
\begin{lemma} \label{lem: specrtal invariant iff stable under holomorphic functional calc.}
	Suppose that $\A$ is a subalgebra of a Banach algebra $A$, and that under a possibly finer topology than the one inherited by $A$, $\A$ is a Fréchet algebra. Then $\A$ is closed under holomorphic functional calculus in $A$ if and only if $\A$ is spectral invariant in $A$.
\end{lemma}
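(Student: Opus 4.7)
The plan is to prove the two implications separately, with the reverse direction (spectral invariance implies stability under holomorphic functional calculus) being the substantive one.

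For the easy direction, assume $\A$ is closed under holomorphic functional calculus in $A$, and let $a\in\tilde{\A}$ be invertible in $\tilde{A}$. Then $0\notin\mathrm{spec}_{\tilde A}(a)$, so the function $f(z)=1/z$ is holomorphic on an open neighbourhood of the spectrum. Applying holomorphic functional calculus in $\tilde A$ gives $f(a)=a^{-1}$, and by assumption this element lies in $\tilde{\A}$, so $a$ is invertible in $\tilde{\A}$. The reverse inclusion of invertibles is automatic since $\tilde{\A}\hookrightarrow\tilde{A}$ is a continuous homomorphism.

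For the harder direction, assume $\A$ is spectral invariant. Fix $a\in\A$ and a holomorphic function $f$ defined on an open neighbourhood $U\supseteq\mathrm{spec}_A(a)$; by spectral invariance this is the same as $\mathrm{spec}_{\tilde\A}(a)$. Choose a smooth contour $\gamma\subset U\setminus\mathrm{spec}_A(a)$ enclosing the spectrum once. The usual definition in $A$ reads
\begin{equation*}
f(a) \;=\; \frac{1}{2\pi i}\oint_{\gamma} f(z)(z-a)^{-1}\,dz.
\end{equation*}
By spectral invariance, $(z-a)^{-1}\in\tilde{\A}$ for every $z\in\gamma$. The goal is to upgrade the integral, which a priori converges in $A$, to one converging in the finer Fréchet topology of $\tilde{\A}$; since $\tilde{\A}$ is complete and embeds continuously in $\tilde A$, the two limits must coincide, placing $f(a)$ in $\tilde{\A}$.

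The main obstacle, and the crux of the argument, is to show that the resolvent map $z\mapsto(z-a)^{-1}$ is continuous from $\gamma$ into $\tilde{\A}$ with its Fréchet topology. Given $z_0\in\gamma$, I would use the resolvent identity
\begin{equation*}
(z-a)^{-1}-(z_0-a)^{-1} \;=\; (z_0-z)(z-a)^{-1}(z_0-a)^{-1},
\end{equation*}
combined with the Neumann-series expansion
\begin{equation*}
(z-a)^{-1} \;=\; (z_0-a)^{-1}\sum_{n=0}^{\infty}\bigl((z_0-z)(z_0-a)^{-1}\bigr)^{n},
\end{equation*}
valid for $z$ sufficiently close to $z_0$. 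The joint continuity of multiplication in $\tilde{\A}$, together with the assumption that the Fréchet topology is defined by an increasing family of norms, lets one control each Fréchet semi-norm of the partial sums after choosing $|z-z_0|$ small enough depending on the chosen semi-norm; here one invokes a closed-graph / open-mapping argument (available since $\tilde{\A}$ is Fréchet and the inclusion into $\tilde A$ is continuous) to produce uniform estimates of $(z_0-a)^{-1}$ in every Fréchet semi-norm. Once continuity into $\tilde{\A}$ is established, the contour integral can be realised as a norm-convergent Riemann sum in each Fréchet semi-norm, hence converges in $\tilde{\A}$ by completeness, yielding $f(a)\in\tilde{\A}\cap\A=\A$ (or in $\tilde{\A}$ if $f$ does not vanish at $0$).
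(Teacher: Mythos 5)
Your forward direction (closure under holomorphic functional calculus implies spectral invariance, via $f(z)=1/z$ applied in $\tilde{A}$) is fine. For comparison, note that the paper does not prove this lemma at all: it cites it as a special case of Schweitzer's Lemma 1.2, so any self-contained argument is already doing more than the text does.

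The substantive direction, however, has a genuine gap at exactly the step you identify as the crux. To sum the Neumann series $(z-a)^{-1}=\sum_{n\ge 0}(z_0-z)^n\bigl((z_0-a)^{-1}\bigr)^{n+1}$ in the Fréchet topology of $\tilde{\A}$ you need, for each fixed semi-norm $\lVert\cdot\rVert_k$, a geometric bound $\lVert b^{n}\rVert_k\le C_k r^n$ with $b=(z_0-a)^{-1}$. But the Fréchet algebras here are not assumed locally $m$-convex: joint continuity only gives $\lVert xy\rVert_k\le C_k\lVert x\rVert_{m}\lVert y\rVert_{m}$ for some $m\ge k$, so iterating to estimate $b^n$ pushes the semi-norm index up at every step, giving roughly $\lVert b^n\rVert_k\le \tilde{C}\prod_{j}\lVert b\rVert_{m_j}$ with $m_j\to\infty$; since the norms are merely increasing, the factors $\lVert b\rVert_{m_j}$ can grow super-exponentially in $j$ and the series can have radius of convergence zero for every $k$. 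This is not a hypothetical pathology: the algebras $S_{p}^{l}(\G)$ to which the lemma is applied satisfy only $\lVert f\ast g\rVert_{p,n}\le c\lVert f\rVert_{p,n+k}\lVert g\rVert_{p,n+k}$ with a genuine index shift $k$. The appeal to a ``closed-graph / open-mapping argument'' does not repair this: those theorems apply to linear maps, and there is no linear map in sight whose continuity would produce geometric bounds on $\lVert b^n\rVert_k$; operators on a Fréchet space carry no single operator norm and no spectral-radius formula. Continuity of inversion on $G(\tilde{\A})$ in the topology $\tau$ — equivalently, continuity of $z\mapsto(z-a)^{-1}$ into $(\tilde{\A},\tau)$ — is precisely the non-trivial content of Schweitzer's lemma and must be established by a different argument. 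Once that continuity is granted, your passage from it to convergence of the Riemann sums in $\tilde{\A}$ by completeness, and hence to $f(a)\in\tilde{\A}$, is correct.
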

The proof of \cite[Corollary 7.9]{KTheoryFrechetPhillips} together with Lemma \ref{lem: specrtal invariant iff stable under holomorphic functional calc.} and \cite[Chapter 3, Appendix C, Proposition 2 and 3]{NoncommutativeGeomConnes} give the next lemma.
\begin{lemma} \label{lem: Connes lemma that spectral invariance induce isomorphism in K-theory}
	Let $\A$ be a subalgebra of a Banach algebra $A$, and suppose $\A$ is a Fréchet algebra under a topology $\tau$ which is finer than the one inherited from $A$. If $\A$ is dense and spectral invariant in $A$, then the inclusion $(\A , \tau) \hookrightarrow A$ induces isomorphisms in $K$-theory $K_{\ast}(\A) \cong K_{\ast}(A)$, $\ast = 0,1$.
	\begin{proof}
		For $K_0 $, since $\A$ is a dense spectral invariant subalgebra of $A $, Lemma \ref{lem: specrtal invariant iff stable under holomorphic functional calc.} and \cite[Chapter 3, Appendix C, Proposition 3 a)]{NoncommutativeGeomConnes} imply together that $K_{0}(\A) \cong K_0 (A)$. 
		
		Lemma \ref{lem: specrtal invariant iff stable under holomorphic functional calc.} together with \cite[Chapter 3, Appendix C, Proposition 2]{NoncommutativeGeomConnes} gives that $M_n (\A)$ is spectral invariant in $M_n (A)$, for every $n \in \N $. Thus, $GL_n (\tilde{\A}) = GL_n (\tilde{A}) \cap M_n (\tilde{\A}) $. By $GL_n (\tilde{\A}) $, we shall mean the subgroup $GL_n (\tilde{\A}) \subset GL_n (\tilde{A})$ with the induced topology, whilst by $GL_n (\tilde{\A}, \tau)$, we shall mean the same group but with the topology induced from $M_n (\tilde{\A}, \tau) $. To see the isomorphism for $K_1 $, we shall show that $GL_n (\tilde{\A}, \tau)_0 = GL_n (\tilde{\A})_0 $, for each $n \in \N $. We shall argue similarly as in the proof of \cite[Corollary 7.9]{KTheoryFrechetPhillips}. Any path of invertibles in $M_n (\tilde{\A})$ that is continuous in the Fréchet topology is also a continuous path of invertibles in the induced topology. Thus, $GL_n (\tilde{\A}, \tau)_0 \subset GL_n (\tilde{\A})_0 $. Conversely, assume we are given a path of invertibles that is continuous in the induced topology. By compactness, we may cover the path by a finite number of balls whose radius is so small that the elements on any straight line segment joining two of its points are all in $GL_n (\tilde{A})$. Since the path is contained in $GL_n (\tilde{\A)} = M_n (\tilde{\A}) \cap GL_n (\tilde{A})$, we may find piece-wise linear paths gluing together to give a path in $GL_n (\tilde{\A})$, continuous in the Fréchet topology, with same end- and start point as the original path. This means that $GL_n (\tilde{\A})_0 \subset GL_n (\tilde{\A}, \tau)_0$, and so $GL_n (\tilde{\A}, \tau)_0 = GL_n (\tilde{\A})$. Thus, for each $n \in \N$, 
		\begin{equation*}
			GL_n (\tilde{\A})/GL_n (\tilde{\A})_0 = GL_n (\tilde{\A} , \tau)/GL_n (\tilde{\A}, \tau)_0 .	
		\end{equation*}
		This, together with \cite[Chapter 3, Appendix C, Proposition 3 b)]{NoncommutativeGeomConnes}, gives us that
		\begin{equation*}
			K_1 (\tilde{\A}) = \varinjlim GL_n (\tilde{\A} , \tau)/GL_n (\tilde{\A} , \tau)_0 = \varinjlim GL_n (\tilde{\A})/GL_n (\tilde{\A})_0 \cong K_1 (\tilde{A}) ,
		\end{equation*}
		and so by definition, $K_1 (\A) \cong K_1 (A)$.
	\end{proof}
\end{lemma}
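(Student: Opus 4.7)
The plan is to reduce to Connes' classical theorem (Appendix C, Propositions 2 and 3 of \cite{NoncommutativeGeomConnes}) that a dense subalgebra which is stable under holomorphic functional calculus has the same $K$-theory as the ambient Banach algebra. By the preceding Lemma \ref{lem: specrtal invariant iff stable under holomorphic functional calc.}, spectral invariance already gives stability under holomorphic functional calculus, so the analytic input is in place; the only real subtlety is that $K$-theory of Fréchet algebras is defined using the Fréchet topology $\tau$, which is a priori finer than the topology induced from $A$, so I need to check that this extra refinement does not produce additional $K$-theory classes.

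For $K_0$, the key observation is that spectral invariance passes to matrix algebras: $M_n(\A)$ is spectral invariant in $M_n(A)$ for every $n$, essentially because holomorphic functional calculus commutes with forming matrices (this is Connes' Proposition 2). Thus every idempotent in $M_n(\tilde A)$ can be perturbed into $M_n(\tilde\A)$ through holomorphic functional calculus on a neighbourhood of $\{0,1\}$, algebraic equivalence of idempotents in $\tilde\A$ coincides with algebraic equivalence in $\tilde A$, and Connes' Proposition 3(a) immediately delivers $K_0(\A) \cong K_0(A)$.

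For $K_1$, spectral invariance at each matrix size gives the set-theoretic equality $GL_n(\tilde\A) = M_n(\tilde\A) \cap GL_n(\tilde A)$. What is left is to show that the path components of $GL_n(\tilde\A)$ agree whether one uses $\tau$ or the topology induced from $A$. One inclusion is automatic since $\tau$ is finer. For the reverse, I would take a continuous path of invertibles in the induced topology and use compactness of $[0,1]$ to cover it by finitely many balls in $M_n(\tilde A)$ so small that straight-line segments between any two points of a given ball remain invertible in $M_n(\tilde A)$. Such segments are affine and lie in $M_n(\tilde\A)$, so by spectral invariance they remain in $GL_n(\tilde\A)$, and the resulting piecewise-linear reparametrisation is automatically continuous in $\tau$ because line segments are continuous in every vector topology on $M_n(\tilde\A)$.

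The main obstacle is this piecewise-linear approximation step for $K_1$: one has to transport continuity in the coarser topology to continuity in $\tau$ without leaving the invertibles. The crucial simplification is that the approximating path is built inside the affine subspace $M_n(\tilde\A)$ from the outset, so the task reduces to keeping line segments inside $GL_n(\tilde\A)$, which is exactly what spectral invariance provides. Passing to the direct limit over $n$ and invoking Connes' Proposition 3(b) then yields $K_1(\tilde\A) \cong K_1(\tilde A)$, and the nonunital case follows by naturality applied to the unitisation sequence.
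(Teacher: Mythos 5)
Your proposal is correct and follows essentially the same route as the paper's proof: Connes' Propositions 2 and 3 combined with Lemma \ref{lem: specrtal invariant iff stable under holomorphic functional calc.} for $K_0$, and for $K_1$ the identification $GL_n(\tilde{\A}) = M_n(\tilde{\A}) \cap GL_n(\tilde{A})$ followed by the compactness/piecewise-linear argument showing the path components of $GL_n(\tilde{\A})$ agree in the two topologies. No gaps to report.
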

For information regarding $K$-theory for Banach algebras, we refer the reader to Blackadars book \cite{blackadarKTheory}. See also \cite{KTheoryFrechetPhillips} for the so called representable $K$-theory defined for locally multiplicatively convex Fréchet algebras.

\subsection{Étale Groupoids} \label{subsec: preliminaries on étale groupoids}
A \emph{groupoid} is a set $\G$ equipped with a partially defined multiplication (composition) $\G^{(2)} \to \G  , (x,y) \mapsto xy$, where $\G^{(2)} \subset \G \times \G$ is the set of composable pairs, and with an inverse map $\G \to \G  , x \mapsto x^{-1}$, such that the following three axioms are satisfied:

\begin{itemize}
	\item[(1)]If $(x , y), (y, z) \in \G^{(2)}$, then $(x y , z) , (x , yz) \in \G^{(2)}$ and $(xy ) z = x (y z)$;
	\item[(2)]$(x^{-1})^{-1} = x$, for all $x \in \G$;
	\item[(3)] $(x, x^{-1}) \in \G^{(2)}$, for all $x \in \G$, and when $(x,y) \in \G^{(2)}$, we have $x^{-1} (xy) = y$ and $(xy)y^{-1} = x$.
\end{itemize}

The set $\G^{(0)} := \{ x x^{-1} \colon x \in \G \}$ is called the \emph{unit space} of the groupoid $\G$, and the maps $r \colon \G \to \G , r(x) = x x^{-1}$ and $s \colon \G \to \G , x \mapsto x^{-1} x$ are called the \emph{range} and \emph{source} maps respectively. We have that $(x,y) \in \G^{(2)}$ if and only if $s(x) = r(y)$. 

A groupoid endowed with a topology such that the operations of multiplication and inversion are continuous is called a \emph{topological groupoid}. Moreover, if the topology is locally compact Hausdorff such that the range map, and therefore also the source map, is a local homeomorphism, the groupoid is said to be \emph{étale}. The open sets $U \subset \G$ for which both $s |_U$ and $r |_U$ are homeomorphisms are called \emph{bisections}. Thus, an étale groupoid is one whose topology has a basis of open bisections. An étale groupoid is said to be \emph{ample} if its topology admits a basis of compact open bisections. 

For any $X \subset \G^{(0)} $, we denote by $\G_X = \{x \in \G \colon s(x) \in X\} $ and $\G^{X} = \{x \in \G \colon r(x) \in X\}$. We shall write $\G_u$ and $\G^{u}$ instead of $\G_{\{u\}}$ and $\G^{\{u\}}$ whenever $u \in \G^{(0)}$ is a unit. The set $\G |_X = \G_X \cap \G^{X} = \{x \in \G \colon r(x) , s(x) \in X\}$ is a subgroupoid of $\G$, with unit space $X$, called the \emph{restriction} of $\G$ to $X$. 
The \emph{isotropy group} at a unit $u \in \G^{(0)}$ is the group $\G_{u}^{u} := \G_u \cap \G^{u}$ and the \emph{isotropy bundle} is
\begin{equation*}
	\text{Iso}(\G) := \{ x \in \G \colon s(x) = r(x) \} = \bigsqcup_{u \in \G^{(0)}} \G_{u}^{u} .
\end{equation*}
A groupoid $\G$ is said to be \emph{principal} if its isotropy bundle coincides with its unit space and is called \emph{effective} if the interior of its isotropy bundle coincides with its unit space. 

The \emph{orbit} of a unit $u \in \G^{(0)}$ is the set $r(\G_u) = \{r(x) \colon s(x) = u\} = s(\G^{u})$. A subset $F \subset \G^{(0)}$ is said to be \emph{full} if it meets every orbit, that is, $F \cap r(\G_u) \neq \emptyset$, for each unit $u \in \G^{(0)}$. Given two sets $U,V \in \G$, we define $UV = \{xy \colon s(x) = r(y) \text{ and } x \in U , y \in V \}$ and $U^{-1} = \{x^{-1} \colon x \in U\}$. 

A \emph{groupoid homomorphism} between two étale groupoids $\G$ and $\mathcal{H}$, is a map $\phi \colon \G \to \mathcal{H}$ such that if $(x,y) \in \G^{(2)}$, then $(\phi(x) , \phi(y)) \in \mathcal{H}^{(2)}$, and in this case $\phi(xy) = \phi(x) \phi(y)$. Two étale groupoids are said to be isomorphic if there is a bijective groupoid homomorphism that is also a homeomorphism. 

We say that two étale groupoids $\G$ and $\mathcal{H}$ are \emph{Kakutani equivalent} if there are full clopen subsets $U \subset \G^{(0)}$ and $V \subset \mathcal{H}^{(0)}$ such that $\G |_{U} \cong \mathcal{H} |_{V}$, as étale groupoids. In the setting of ample groupoids with $\sigma$-compact unit spaces, this notion of equivalence of groupoids is equivalent to, among other things, Morita equivalence of groupoids and equivalence of groupoids in the sense of Renault (see \cite{AmpleGroupoidsAndHomology}). For more background material on étale groupoids, we refer the reader to the books of Jean Renault \cite{renault2006groupoid}, Alan Paterson \cite{patersonGroupoids} and Aidan Sims \cite{SimsNotesOnGroupoids}. 

\subsection{Reduced Groupoid $L^p$-Operator Algebras} \label{subsec: preliminaries on reduced Lp operator algebras}
For an étale groupoid $\G$ and $p \in [1, \infty)$, one can construct an $L^p $-operator algebra in a similar manner as to how one constructs its reduced $C^{\ast}$-algebra. We recall this construction in what follows. 

Since $\G$ is étale, the fibers $\G_u$, for $u \in \G^{(0)}$, are discrete. Let $C_c (\G)$ denote the space of continuous compactly supported functions on $\G$. We endow $C_c (\G)$ with the convolution product, which for $f,g \in C_c (\G)$ is given by 
\begin{equation*}
	f \ast g (x) = \sum_{y \in \G_{s(x)}} f(x y^{-1}) g(y) ,
\end{equation*}
for $x \in \G$. Under the convolution product, $C_c (\G)$ is an associative $\C$-algebra. Moreover, under the \emph{$I$-norm} given by 
\begin{equation*}
	\lVert f \rVert_I = \max \Big\lbrace \sup_{u \in \G^{(0)}} \sum_{x \in \G_u} |f(x)| , \sup_{u \in \G^{(0)}} \sum_{x \in \G^{u}} |f(x)| \Big\rbrace ,
\end{equation*}
$C_c (\G)$ becomes a normed algebra. Fix any unit $u \in \G^{(0)}$. The operator $\lambda_u (f)$ associated to $f \in C_c(\G)$, is the operator given by 
\begin{equation*}
	\lambda_u (f) (\xi) (x) = \sum_{y \in \G_u} f(x y^{-1}) \xi (y) ,
\end{equation*}
for $x \in \G_{u}$ and $\xi \in C_c (\G_u)$. The map $\lambda_u \colon C_c (\G) \to B(\ell^p (\G_u))$ is a contractive representation of $C_c(\G)$, and is called the \emph{left regular representation} at $u$. The \emph{reduced $L^p $-operator algebra} associated to $\G$ is denoted $F_{\lambda}^p (\G)$ and is the completion of $C_c(\G)$ under the norm $\lVert f \rVert_{\Fp(\G)} := \sup_{u \in \G^{(0)}} \lVert \lambda_{u}(f) \rVert$. Since $\bigoplus_{u \in \G^{(0)}} \lambda_u$ is an isometric representation of $F_{\lambda}^p (\G)$ on an $L^p $-space, $F_{\lambda}^p (\G)$ is indeed an $L^p $-operator algebra. $F_{\lambda}^p (\G)$ is unital if and only if $G^{(0)}$ is compact, in which case the indicator function of the unit space is the unit. 

As in the $C^*$-algebraic case, we have the following result.
\begin{lemma} [{\cite[Lemma 4.5]{CGTRigidityResultsForLpOp}}] \label{lem: basic estimaste involving sup-norm, operator-norm and I-norm}
	For any $f \in C_c (\G)$, we have 
	\begin{equation*}
		\lVert f \rVert_{\infty} \leq \lVert f \rVert_{\Fp(\G)} \leq \lVert f \rVert_I .
	\end{equation*}
	Moreover, for $f \in C_c (\G^{(0)})$, we have $\lVert f \rVert_{\infty} = \lVert f \rVert_{\Fp(\G)} = \lVert f \rVert_I$. 
\end{lemma}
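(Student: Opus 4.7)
The plan is to prove the three displayed inequalities in sequence, then handle the equality case for functions supported on the unit space.

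First I would tackle the upper bound $\lVert f\rVert_{\Fp(\G)}\leq \lVert f\rVert_I$ via a Schur test applied to the convolution kernel $K(x,y)=f(xy^{-1})$ on $\G_u\times\G_u$. Fix $u\in\G^{(0)}$. For fixed $x\in\G_u$, as $y$ ranges over $\G_u$ the element $z=xy^{-1}$ ranges bijectively over $\G^{r(x)}$ (since $y=z^{-1}x$ recovers $y\in\G_u$), so
\begin{equation*}
    \sum_{y\in\G_u}|f(xy^{-1})|=\sum_{z\in\G^{r(x)}}|f(z)|\leq \lVert f\rVert_I.
\end{equation*}
The symmetric argument, holding $y$ fixed and reparametrising $z=xy^{-1}$ over $\G_{r(y)}$, gives the column-sum bound $\sum_{x\in\G_u}|f(xy^{-1})|\leq\lVert f\rVert_I$. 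Combining these two bounds via Hölder's inequality, in the standard way (writing $|K(x,y)\xi(y)|=|K(x,y)|^{1/q}\cdot|K(x,y)|^{1/p}|\xi(y)|$ and applying Hölder in $y$, then summing in $x$), one obtains $\lVert\lambda_u(f)\rVert\leq\lVert f\rVert_I$; taking the supremum over $u$ yields the inequality.

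Next I would obtain the lower bound $\lVert f\rVert_\infty\leq\lVert f\rVert_{\Fp(\G)}$ by testing $\lambda_u(f)$ on the $\ell^p$-basis vector $\delta_u\in\ell^p(\G_u)$. A direct computation gives $\lambda_u(f)\delta_u(x)=f(x)$ for every $x\in\G_u$, so
\begin{equation*}
    \lVert\lambda_u(f)\rVert\geq\lVert\lambda_u(f)\delta_u\rVert_p=\Bigl(\sum_{x\in\G_u}|f(x)|^p\Bigr)^{1/p}\geq\sup_{x\in\G_u}|f(x)|.
\end{equation*}
Now for any $x_0\in\G$, set $u=s(x_0)$; then $x_0\in\G_u$ and the above gives $|f(x_0)|\leq\lVert f\rVert_{\Fp(\G)}$. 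Taking the supremum over $x_0\in\G$ yields $\lVert f\rVert_\infty\leq\lVert f\rVert_{\Fp(\G)}$.

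For the final assertion, suppose $f\in C_c(\G^{(0)})$. Since the only element of $\G_u$ (respectively $\G^u$) that lies in the unit space is $u$ itself, both sums defining $\lVert f\rVert_I$ collapse to $|f(u)|$. Hence $\lVert f\rVert_I=\sup_{u\in\G^{(0)}}|f(u)|=\lVert f\rVert_\infty$, and the chain of inequalities already proved forces $\lVert f\rVert_\infty=\lVert f\rVert_{\Fp(\G)}=\lVert f\rVert_I$. The only mildly delicate point in the whole argument is the Schur/Hölder estimate for the upper bound; everything else is essentially bookkeeping with the groupoid structure and the fact that the fibers $\G_u$ are discrete.
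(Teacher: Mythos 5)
Your proposal is correct and is essentially the standard argument (the paper itself gives no proof, citing \cite[Lemma 4.5]{CGTRigidityResultsForLpOp}, whose proof proceeds by the same Schur-test/row-and-column-sum reparametrisation and evaluation on $\delta_u$). The only cosmetic point is that the Hölder step with exponent $1/q$ degenerates at $p=1$, where the column-sum bound alone already gives $\lVert\lambda_u(f)\rVert\leq\lVert f\rVert_I$.
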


Let $u \in \G^{(0)}$ and $q \in (1, \infty]$ be the Hölder conjugate to $p$, so that $\frac{1}{p} + \frac{1}{q} = 1$. We identify the dual of $\ell^p (\G_u)$ with $\ell^{q} (\G_u)$, where the dual pairing is given by 
\begin{equation*}
	\langle \gamma , \eta \rangle = \sum_{x \in \G_u} \gamma(x) \eta(x) ,
\end{equation*} 
for $\gamma \in \ell^p (\G_u)$ and $\eta \in \ell^q (\G_u)$. In the next proposition, by $\delta_x$ we mean the standard basis element in $\ell^p (\G_{s(x)})$ corresponding to $x \in \G_{s(x)}$. The space of continuous functions on $\G$ vanishing at infinity will be written as $C_0 (\G)$.

\begin{proposition}[{\cite[Proposition 4.7 and 4.9]{CGTRigidityResultsForLpOp}}] \label{prop: identification of Fp functions with C0 functions}
	The map $j_p \colon F_{\lambda}^{p}(\G) \to C_0 (\G)$ given by 
	\begin{equation*}
		j_p (a) (x) = \langle \lambda_{s(x)} (a) (\delta_{s(x)}) , \delta_{x} \rangle ,
	\end{equation*}
	for $a \in F_{\lambda}^p (\G)$ and $x \in \G$, is contractive, linear, injective and extends the identity on $C_c (\G)$. Moreover, given $a,b \in \Fp(\G)$ and $x \in \G$, we have that $j_p (a b) (x) = j_p (a) \ast j_p (b) (x)$, where the sum defining $j_p (a) \ast j_p (b) (x)$ is absolutely convergent.
\end{proposition}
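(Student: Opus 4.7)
The plan is to read off the elementary statements from the definition and to base everything else on a single identity: for every $a \in \Fp(\G)$, every $u \in \G^{(0)}$, and every $x, y \in \G_u$, the matrix coefficient of $\lambda_u(a)$ should satisfy
$$\langle \lambda_u(a)(\delta_y), \delta_x \rangle = j_p(a)(xy^{-1}).$$
For $f \in C_c(\G)$ this is a direct calculation with the convolution formula (both sides equal $f(xy^{-1})$), and the general case follows by approximation since both sides depend continuously on $a$: the left through $\lambda_u$, the right through the pointwise bound
$$|j_p(a)(x)| \leq \|\lambda_{s(x)}(a)(\delta_{s(x)})\|_p \, \|\delta_x\|_q \leq \|a\|_{\Fp(\G)},$$
which simultaneously supplies linearity (obvious) and contractivity. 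The extension-of-identity statement then drops out of the special case $y = u = s(x)$, since for $f \in C_c(\G)$ one computes $\lambda_{s(x)}(f)(\delta_{s(x)})(z) = f(z)$ for $z \in \G_{s(x)}$. To see $j_p(a) \in C_0(\G)$, I approximate $a$ by a sequence $f_n \in C_c(\G)$ in $\Fp(\G)$; by contractivity (combined with Lemma \ref{lem: basic estimaste involving sup-norm, operator-norm and I-norm}) $j_p(f_n) = f_n$ converges uniformly to $j_p(a)$, exhibiting $j_p(a)$ as a uniform limit of functions in $C_c(\G) \subset C_0(\G)$. Injectivity is then immediate from the displayed identity: if $j_p(a) = 0$, every matrix coefficient of every $\lambda_u(a)$ vanishes, so $\lambda_u(a) = 0$ for all $u$ and $\|a\|_{\Fp(\G)} = \sup_u \|\lambda_u(a)\| = 0$.

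For the convolution formula and absolute convergence, the natural route is through the Banach-space transpose. Fixing $x \in \G$ and setting $u = s(x)$, I write $\eta = \lambda_u(b)(\delta_u) \in \ell^p(\G_u)$, so that by definition $\eta(y) = j_p(b)(y)$ and $\|\eta\|_p \leq \|b\|_{\Fp(\G)}$. Writing $\lambda_u(a)'$ for the dual operator on $\ell^q(\G_u)$ (which has the same norm as $\lambda_u(a)$), the matrix-coefficient identity above, read in transposed form, yields
$$\lambda_u(a)'(\delta_x)(y) = \langle \lambda_u(a)(\delta_y), \delta_x \rangle = j_p(a)(xy^{-1}),$$
so the function $y \mapsto j_p(a)(xy^{-1})$ lies in $\ell^q(\G_u)$ with norm at most $\|a\|_{\Fp(\G)}$. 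The duality $\langle \lambda_u(a)(\eta), \delta_x \rangle = \langle \eta, \lambda_u(a)'(\delta_x) \rangle$ then gives
$$j_p(ab)(x) = \sum_{y \in \G_u} j_p(b)(y)\, j_p(a)(xy^{-1}),$$
and Hölder's inequality bounds the right-hand side in absolute value by $\|a\|_{\Fp(\G)} \|b\|_{\Fp(\G)}$, delivering the absolute convergence and the identification with $(j_p(a) \ast j_p(b))(x)$.

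The main obstacle is the promotion of $\lambda_u(a)'(\delta_x)(y) = j_p(a)(xy^{-1})$ from $C_c(\G)$ to the full completion $\Fp(\G)$; concretely one needs continuity of both sides in $a$ under the reduced norm, together with the standard fact that the Banach-space dual of a bounded operator on $\ell^p$ is bounded on $\ell^q$ with equal norm. Once this key identification is secured, every remaining step is an exercise in density of $C_c(\G)$ in $\Fp(\G)$ and in elementary manipulation of the dual pairing.
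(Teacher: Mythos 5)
Your argument is correct; the paper itself gives no proof of this proposition (it is quoted from \cite[Propositions 4.7 and 4.9]{CGTRigidityResultsForLpOp}), and your route via the matrix-coefficient identity $\langle\lambda_u(a)(\delta_y),\delta_x\rangle = j_p(a)(xy^{-1})$, density of $C_c(\G)$, and the Banach-space transpose of $\lambda_u(a)$ paired against $\lambda_u(b)(\delta_u)$ is precisely the standard argument of that reference. All the steps check out, including the absolute convergence via H\"older applied to $\eta\in\ell^p(\G_u)$ and $\lambda_u(a)'(\delta_x)\in\ell^q(\G_u)$.
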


Thus, under the identification provided by the map $j_p$, the algebraic operations on $\Fp(\G)$ may be expressed in the same way as the algebraic operations on $C_c (\G)$. In the $C^*$-algebraic case one also defines the involution on $C_c (\G)$ as $f^{\ast}(x) := \overline{f(x^{-1})}$, and this extends to an involution on $\Cred(\G)$. In our more general case, it extends to an isometric anti-isomorphism between $\Fp(\G)$ and $\Fq(\G)$, as the next lemma shows.

\begin{lemma} \label{lem: involution extends to isometric anti-homomorphism}
	Assume $p \in (1, \infty)$. For $f \in C_c (\G)$, we have that $\lVert f \rVert_{\Fp(\G)} = \lVert f^{\ast} \rVert_{\Fq(\G)}$, and the assignment $f \mapsto f^{\ast}$ extends to an isometric anti-isomorphism $^{\ast} \colon \Fp(\G) \to \Fq(\G)$. Moreover, for any $a \in \Fp(\G)$, $j_q (a^{\ast}) = j_p (a)^{\ast} $ as elements in $C_0 (\G)$.
	\begin{proof}
		
		The first statement can be proved as in \cite[Lemma 3.5]{AreOgEduardHermitianBanach...}. If $f_n \to a$ in $\Fp(\G)$, also $f_{n}^{\ast} \to a^{\ast}$ in $\Fq(\G)$. Since $j_p (f) = f = j_q (f) $, for all $f \in C_c (\G)$, and $^{\ast}$ is continuous on $C_0 (\G)$, we obtain 
		\begin{equation*}
			j_q (a^{\ast}) = \lim_{n \to \infty} j_q (f_{n}^{\ast}) = \lim_{n \to \infty} f_{n}^{\ast} = \lim_{n \to \infty} j_p (f_n)^{\ast} = (\lim_{n \to \infty} j_p (f_n))^{\ast} = j_p (a)^{\ast} .
		\end{equation*}
	\end{proof}
\end{lemma}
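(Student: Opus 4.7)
The plan is to reduce the first (isometry) assertion to a fact about transposes of operators on $\ell^p$-spaces, and then obtain the remaining assertions by density and continuity of the maps $j_p, j_q$.

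For the isometry, fix a unit $u \in \G^{(0)}$ and use the bilinear pairing $\ell^p(\G_u) \times \ell^q(\G_u) \to \C$ from the excerpt. A Fubini computation on finitely supported $\xi, \eta$ (legal since the fibres $\G_u$ are discrete and $f$ has compact support) identifies the transpose of $\lambda_u(f)$ with respect to this pairing as the operator $T \colon \ell^q(\G_u) \to \ell^q(\G_u)$ given by $T\eta(y) = \sum_{x \in \G_u} f(xy^{-1})\eta(x)$. The substitution $z = xy^{-1}$ (with $z$ ranging over $\{z \in \G : s(z) = r(y)\}$, not over $\G_u$) converts this into $\lambda_u(\check f)\eta(y)$, where $\check f(x) := f(x^{-1})$. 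Since the Banach-space transpose is an isometry between $B(\ell^p(\G_u))$ and $B(\ell^q(\G_u))$, this gives $\lVert \lambda_u(f) \rVert_{B(\ell^p(\G_u))} = \lVert \lambda_u(\check f) \rVert_{B(\ell^q(\G_u))}$. The antilinear complex conjugation $J \colon \xi \mapsto \overline{\xi}$ is an isometry of any $\ell^p$-space and satisfies $J \lambda_u(g) J = \lambda_u(\overline{g})$, so $\lVert \lambda_u(\overline{g}) \rVert = \lVert \lambda_u(g) \rVert$. Since $f^{\ast} = \overline{\check f}$, chaining the two equalities yields $\lVert \lambda_u(f) \rVert_{B(\ell^p(\G_u))} = \lVert \lambda_u(f^{\ast}) \rVert_{B(\ell^q(\G_u))}$, and taking suprema over $u$ produces $\lVert f \rVert_{\Fp(\G)} = \lVert f^{\ast} \rVert_{\Fq(\G)}$.

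Next I would verify algebraically on $C_c(\G)$ that $(f \ast g)^{\ast} = g^{\ast} \ast f^{\ast}$ and $(f^{\ast})^{\ast} = f$, which follows from a substitution in the convolution formula together with $\overline{fg} = \overline{f}\,\overline{g}$. Combined with the isometry established above and the density of $C_c(\G)$ in both $\Fp(\G)$ and $\Fq(\G)$, the involution extends uniquely to an isometric anti-isomorphism ${}^{\ast} \colon \Fp(\G) \to \Fq(\G)$. For the identity $j_q(a^{\ast}) = j_p(a)^{\ast}$, I would follow the continuity route sketched by the authors: on $C_c(\G)$ both sides equal $f^{\ast}$ since $j_p$ and $j_q$ restrict to the identity on $C_c(\G)$; for general $a \in \Fp(\G)$, choose $f_n \in C_c(\G)$ with $f_n \to a$ in $\Fp(\G)$, so that $f_n^{\ast} \to a^{\ast}$ in $\Fq(\G)$. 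Lemma \ref{lem: basic estimaste involving sup-norm, operator-norm and I-norm} makes $j_p$ and $j_q$ contractive into $C_0(\G)$, so $j_p(f_n) \to j_p(a)$ and $j_q(f_n^{\ast}) \to j_q(a^{\ast})$ uniformly, and since pointwise conjugation is an isometry of $C_0(\G)$ the desired equality follows.

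The main obstacle is the transpose computation for $\lambda_u(f)$: both the interchange of summation and the reindexing $z = xy^{-1}$ must be handled on the right domain of summation, and one must keep track of which target $\ell^p$-space each operator acts on so that the transpose/conjugation factorization of ${}^{\ast}$ lines up with the Hölder-dual switch from $\Fp$ to $\Fq$. Once this bookkeeping is in place, extending to the completion and deducing $j_q(a^{\ast}) = j_p(a)^{\ast}$ is essentially formal.
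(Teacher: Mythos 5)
Your proposal is correct and follows the same route as the paper: the identity $j_q(a^{\ast}) = j_p(a)^{\ast}$ is obtained by exactly the paper's density-and-continuity argument, and the isometry $\lVert f \rVert_{\Fp(\G)} = \lVert f^{\ast} \rVert_{\Fq(\G)}$ — which the paper outsources to a citation — is established by the standard Banach-space transpose computation $\lambda_u(f)^{t} = \lambda_u(\check f)$ on $\ell^q(\G_u)$ combined with conjugation by $J\colon \xi \mapsto \overline{\xi}$, which is precisely the content behind the cited lemma. The bookkeeping you flag (finite supports justifying the Fubini interchange, and the reindexing $\check f(yx^{-1}) = f(xy^{-1})$) checks out, so no gap remains.
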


The next lemma roughly states that, under the identification provided by the map $j_p$, we may view elements in $\Fp(\G)$ as convolution operators induced from the corresponding $C_0 (\G)$ functions.

\begin{lemma} \label{lem: interpretation of Fp elements as convolution operators}
	Let $a \in \Fp(\G)$ be arbitrary and fix any unit $u \in \G^{(0)}$. We have that $\lambda_u (a) (\xi) = j_p (a) \ast \xi$, for any $\xi \in C_c(\G_u)$.
	\begin{proof}
		Let $u \in \G^{(0)}$ be any unit and suppose $f_n \to a$ in $\Fp(\G)$. Then by Proposition \ref{prop: identification of Fp functions with C0 functions}, $f_n \to j_p (a)$ in $C_0 (\G)$ and $\lambda_u (f_n) \to \lambda_u (a) $, as $n \to \infty$. Let $\xi \in C_c (\G_u)$ be given. Then since $\xi$ has finite support, we have 
		\begin{equation*}
			\lambda_u (a)(\xi) (x) = \lim_{n \to \infty} \lambda_u(f_n)(\xi) (x) = \lim_{n \to \infty} f_n \ast \xi (x) = j_p (a) \ast \xi (x),
		\end{equation*}
		for any $x \in \G_u$. 
	\end{proof}
\end{lemma}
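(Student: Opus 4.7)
The strategy is a straightforward density argument. For $f \in C_c(\G)$ the formula in question holds by the very definition of the left regular representation, since $\lambda_u(f)(\xi)(x) = \sum_{y \in \G_u} f(xy^{-1})\xi(y) = (f \ast \xi)(x)$ and $j_p(f) = f$. I will then extend to an arbitrary $a \in \Fp(\G)$ by choosing a sequence $\{f_n\} \subset C_c(\G)$ with $f_n \to a$ in $\Fp(\G)$ and passing to pointwise limits on both sides at a fixed $x \in \G_u$.

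On the operator side, since $\lambda_u$ is contractive, $\lambda_u(f_n) \to \lambda_u(a)$ in $B(\ell^p(\G_u))$, so for any fixed $\xi \in C_c(\G_u)$ one has $\lambda_u(f_n)(\xi) \to \lambda_u(a)(\xi)$ in $\ell^p(\G_u)$. Using the elementary inequality $|v(x)| \leq \lVert v \rVert_p$ for $v \in \ell^p(\G_u)$ and $x \in \G_u$, this forces pointwise convergence $\lambda_u(f_n)(\xi)(x) \to \lambda_u(a)(\xi)(x)$ for every $x$.

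On the convolution side, Proposition \ref{prop: identification of Fp functions with C0 functions} tells me that $j_p$ is contractive as a map into $C_0(\G)$, hence $f_n = j_p(f_n)$ converges uniformly to $j_p(a)$ on $\G$. Fixing $x \in \G_u$, the sum $(f_n \ast \xi)(x) = \sum_{y \in \G_u} f_n(xy^{-1}) \xi(y)$ runs over $\supp(\xi)$, which is finite since $\xi$ is compactly supported in the discrete fiber $\G_u$. The number of non-zero terms is therefore independent of $n$, and uniform convergence $f_n \to j_p(a)$ allows me to interchange limit and sum, giving $(f_n \ast \xi)(x) \to (j_p(a) \ast \xi)(x)$. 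Equating the two pointwise limits then yields $\lambda_u(a)(\xi)(x) = (j_p(a) \ast \xi)(x)$ for all $x \in \G_u$, which is the desired identity.

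As the whole argument is a routine density argument and all convergences are controlled by the contractivity of $\lambda_u$ and $j_p$ together with the finite support of $\xi$, I do not anticipate any serious obstacle. The only point that requires a moment's care is ensuring that the convolution sum is genuinely finite, so that the limit interchange is automatic rather than requiring a dominated convergence argument; this is precisely what the compactness of $\supp(\xi)$ in the discrete fiber $\G_u$ provides.
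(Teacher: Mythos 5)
Your proposal is correct and follows essentially the same route as the paper's proof: approximate $a$ by $f_n \in C_c(\G)$, use contractivity of $\lambda_u$ and $j_p$ to get convergence of both sides, and exploit the finite support of $\xi$ in the discrete fiber to pass to the pointwise limit in the convolution sum. The extra care you take in justifying the pointwise convergences is fine but adds nothing beyond what the paper's shorter argument already contains.
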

For an overview of the theory of $L^p$-operator algebras, we refer the reader to \cite{GardellaModernLookofLp}.

\section{Rapid Decay and Polynomial Growth} \label{sec: Length Functions Rapid Decay and Polynomial Growth}
This section deals with length functions on groupoids, and from these the notions of rapid decay and polynomial growth of groupoids, which have been extensively studied in the group case. 
In the case of a group $\Gamma$, a length function on $\Gamma$ is a map $l \colon \Gamma \to \R_+$ such that $l(e) = 0$, where $e$ is the identity element, $l(xy) \leq l(x) + l(y)$ and $l(x^{-1}) = l(x)$. The natural generalization of this to groupoids is to 
change the first condition to $l(u) = 0$, for all $u \in \G^{(0)}$, and the second condition to $l(xy) \leq l(x) + l(y)$, for all $(x,y) \in \G^{(2)}$ (see Definition \ref{def:length-function-on-groupoid}).  

Length functions on groupoids have already been studied by several different authors. For instance, they appear in the study of amenability of measured groupoids in the book of Renault and Anantharaman-Delaroche \cite{AmenabilityGroupoidsRenault}; there, the length function is used to show that if certain growth conditions with respect to the length function are satisfied, the measured groupoid is amenable. Ma and Wu in \cite{ma2020almostElem} and \cite{ma2021fiberwise} show that length functions with the additional requirement that they are zero only on units, are in one-to-one correspondence with extended metrics on the groupoids. These metrics are then related to properties such as almost elementariness, fiberwise amenability and soficity of the topological full groups associated with groupoids. Some of these length functions studied by Ma and Wu were also used by Jiang, Zhang and Zhang in \cite{JiangZhangZhang:QuasiLocalityForEtaleGroupoids} to prove that for amenable $\sigma$-compact étale groupoids $\G$, the reduced groupoid $C^*$-algebra agrees with certain $C^*$-algebras consisting of $\G$-equivariant adjointable operators on a Hilbert $C^*$-module naturally associated with $\G$. 
In \cite{hou2017spectral}, Hou uses length functions as a tool to prove the existence of dense spectral invariant Fréchet algebras, which is one of the main motivations for their study in the group case. 

Like Hou, we shall primarily be interested in using length functions on groupoids to create dense spectral invariant Fréchet algebras in the reduced $L^p$-operator algebras $\Fp(\G)$. In particular, we will in the next section investigate the following question.
\begin{question} \label{question: motivating question}
	For which types of étale groupoids $\G$ do we have isomorphisms in $K$-theory $K_{\ast}(\Fp(\G)) \cong K_{\ast}(\Fq(\G))$, for some (or all) $p,q \in [1, \infty)$?
\end{question}
Throughout the rest of the text, we shall always assume that $\G$ is an étale groupoid and that $p \in [1, \infty)$.

\begin{definition}\label{def:length-function-on-groupoid}
	A \emph{length function} on an étale groupoid $\G$ is a map $l \colon \G \to \R_+$, satisfying
	\begin{itemize}
		\item[(1)] $l(u) = 0$, for all $u \in \G^{(0)}$;
		\item[(2)] $l(xy) \leq l(x) + l(y)$, for all $(x,y) \in \G^{(2)}$;
		\item[(3)] $l(x^{-1}) = l(x)$, for all $x \in \G$.  
	\end{itemize}
\end{definition}
A length function is said to be \emph{locally bounded} if it is bounded on compact subsets.

The analogue to the $\ell^1 $-norm of an element in the group ring is the $I $-norm of a continuous compactly supported function on an étale groupoid, defined in Section \ref{subsec: preliminaries on reduced Lp operator algebras}. The natural analogue then for the $\ell^p $-norm is the $I^p $-norm.

\begin{definition} \label{def: I^p and p,k -norm defined}
	Let $\G$ be an étale groupoid and let $p \in [1, \infty)$. For $f \in C_c (\G)$, we define the $I^p $-norm to be 
	\begin{equation*}
		\lVert f \rVert_{I^p} := \max \Big\lbrace \sup_{u \in \G^{(0)}} \Big( \sum_{x \in \G_u} \Big| f(x)\Big|^p \Big)^{1/p} , \sup_{u \in \G^{(0)}} \Big( \sum_{x \in \G^{u}} | f(x)|^p \Big)^{1/p} \Big\rbrace.
	\end{equation*}
	Assuming $\G$ is endowed with a locally bounded length function $l$, we define 
	\begin{equation*}
		\lVert f \rVert_{p,k} := \big\lVert f(1+l)^k \big\rVert _{I^p} ,
	\end{equation*}
	for $f \in C_c (\G)$ and $k \in \N_0 $. 
\end{definition}

It is clear that $\lVert f \rVert_{\infty} \leq \lVert f \rVert_{I^p} = \lVert f \rVert_{p,0} \leq \lVert f \rVert_{p,k}$, for all $f \in C_c (\G)$ and all $k \in \N_0$. Thus, $\lVert \cdot \rVert_{I^p} $ is a norm on $C_c(\G)$, and when $\G$ has a locally bounded length function, so is $\lVert \cdot \rVert_{p,k} $, for each $k \in \N_0 $. 

\begin{lemma} \label{lem: The Banach space Lpk are contained in C0}
	Let $p \in [1, \infty)$ and $k \in \N_0 $. Assume $\G$ is an étale groupoid endowed with a locally bounded length function $l$. Let $L_{p,k}(\G)$ be the subspace consisting of $f \in C_0 (\G)$ such that $\lVert f \rVert_{p,k} < \infty$. Then $L_{p,k} (\G)$ is a Banach space.
	\begin{proof}
		First of all, we clearly also have that $\lVert f \rVert_\infty \leq \lVert f \rVert_{p,k}$ for all $f \in L_{p,k}(\G)$. Let $\{f_n \}_n \subset L_{p,k} (\G)$ be a Cauchy-sequence under the norm $\lVert \cdot \rVert_{p,k}$. In particular, $\{f_n \}_n$ is a Cauchy sequence in $C_0 (\G)$, and so there exists $f \in C_0 (\G)$ such that $\lim_{n \to \infty} f_n = f$ in $C_0 (\G)$. For any $u \in \G^{(0)} $ and finite subsets $\mathcal{F}_{u} \subset \G_u$ and $\mathcal{F}^{u} \subset \G^u$, we have that 
		\begin{equation*}
			\sum_{x \in \mathcal{F}_{u}} | f(x) |^p (1+l(x))^{pk} = \lim_{n \to \infty} \sum_{x \in \mathcal{F}_{u}} | f_n (x) |^p (1+l(x))^{pk} \leq \sup_{n} \lVert f_n \rVert_{p,k} < \infty ,
		\end{equation*}
		and similarly, 
		\begin{equation*}
			\sum_{x \in \mathcal{F}^{u}} | f(x) |^p (1+l(x))^{pk} \leq \sup_{n} \lVert f_n \rVert_{p,k} < \infty ,
		\end{equation*}
		for any $k \in \N_0 $. Therefore, $\lVert f \rVert_{p,k} \leq \sup_{n} \lVert f_n \rVert_{p,k} < \infty $, and so $f \in L_{p,k}(\G)$. A priori, it is not immediately clear that $f_n \to f$ in $L_{p,k} (\G)$, but we will show in the sequel that this is indeed the case. Notice that for any $u \in \G^{(0)}$, we have that 
		\begin{equation*}
			\max \{ \lVert f_n (1 + l)^k  - f_m (1 + l)^k  \rVert_{\ell^p (\G_u)} , \lVert f_n (1 + l)^k - f_m (1 + l)^k  \rVert_{\ell^p (\G^{u})} \} \leq \lVert f_n - f_m \rVert_{p,k} \to 0 ,
		\end{equation*}
		as $m,n \to \infty$, independently of $u \in \G^{(0)}$. Thus, for each $u \in \G^{(0)}$ there exist $g_u \in \ell^p (\G_u)$ and $g^{u} \in \ell^p (\G^{u})$ such that $(1+l)^k f_n \to g_u$ in $\ell^p (\G_u)$ and $(1 + l)^k f_n \to g^u$ in $\ell^p (\G^u)$. It follows that there exists a subsequence $n_j$ such that $(1 + l)^k f_{n_j} \to g_u$ and $(1 + l)^k f_{n_j} \to g^u$ pointwise in respectively $\G_u$ and $\G^u$. But we know that $(1 + l)^k f_{n_j} \to (1 + l)^k f$ pointwise, so $g_u = (1 + l)^k f \big|_{\G_u}$ and $g^u = (1 + l)^k f \big|_{\G^{u}}$. In particular, $ (1+l)^k f_n \to (1 + l)^k f$ in both $\ell^p (\G_u)$ and $\ell^p (\G^u)$. Now, given $\epsilon > 0$, there exists $N \in \N$ such that whenever $n,m \geq N$, we have 
		\begin{equation*}
			\max \Big\{ \big\lVert f_n (1 + l)^k  - f_m (1 + l)^k  \big\rVert_{\ell^p (\G_u)} , \big\lVert f_n (1 + l)^k - f_m (1 + l)^k  \big\rVert_{\ell^p (\G^{u})} \Big\} \leq \epsilon ,
		\end{equation*}
		for all $u \in \G^{(0)}$. Thus,
		\begin{align*}
			&\max \Big\{ \big\lVert f (1 + l)^k  - f_m (1 + l)^k  \big\rVert_{\ell^p (\G_u)} , \big\lVert f (1 + l)^k - f_m (1 + l)^k  \big\rVert_{\ell^p (\G^{u})} \Big\}  \\
			&= \lim_{n \to \infty} \max \Big\{ \big\lVert f_n (1 + l)^k  - f_m (1 + l)^k  \big\rVert_{\ell^p (\G_u)} , \big\lVert f_n (1 + l)^k - f_m (1 + l)^k  \big\rVert_{\ell^p (\G^{u})} \Big\} \leq \epsilon ,
		\end{align*} 
		when $m \geq N $, for any $u \in \G^{(0)}$. That is, $\lVert f - f_m \rVert_{p,k} \leq \epsilon $, when $m \geq N$. This means that $f_n \to f$ in $L_{p,k} (\G)$.
	\end{proof}
\end{lemma}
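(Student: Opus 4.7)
The plan is to prove completeness of $L_{p,k}(\G)$ under $\lVert \cdot \rVert_{p,k}$, since it is manifestly a normed space. Let $\{f_n\}_n$ be a Cauchy sequence in $L_{p,k}(\G)$. Since $\lVert f \rVert_{\infty} \le \lVert f \rVert_{p,k}$ for every $f$, the sequence is also Cauchy in the Banach space $C_0(\G)$, so there exists $f \in C_0(\G)$ with $f_n \to f$ uniformly, and in particular pointwise on $\G$.

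Next I would verify $f \in L_{p,k}(\G)$ by a Fatou-type argument. Any Cauchy sequence is bounded, so $M := \sup_n \lVert f_n \rVert_{p,k} < \infty$. For any unit $u \in \G^{(0)}$ and any finite subset $F \subset \G_u$, pointwise convergence of $f_n$ to $f$ yields
\begin{equation*}
\sum_{x \in F} |f(x)|^p (1+l(x))^{pk} = \lim_{n \to \infty} \sum_{x \in F} |f_n(x)|^p (1+l(x))^{pk} \le M^p ,
\end{equation*}
and analogously for finite subsets of $\G^u$. Taking the supremum first over such finite subsets and then over $u$ gives $\lVert f \rVert_{p,k} \le M < \infty$, so $f \in L_{p,k}(\G)$.

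To conclude, I would apply the same Fatou device to the differences $f_n - f_m$. Given $\epsilon > 0$ pick $N$ so that $\lVert f_n - f_m \rVert_{p,k} < \epsilon$ for all $n,m \ge N$. Then for any $u \in \G^{(0)}$, any finite $F \subset \G_u$ (or $F \subset \G^u$), and any $m \ge N$, sending $n \to \infty$ in
\begin{equation*}
\sum_{x \in F} |f_n(x) - f_m(x)|^p (1+l(x))^{pk} \le \epsilon^p
\end{equation*}
gives $\sum_{x \in F} |f(x) - f_m(x)|^p (1+l(x))^{pk} \le \epsilon^p$. Passing to suprema as before yields $\lVert f - f_m \rVert_{p,k} \le \epsilon$ whenever $m \ge N$, which is the desired convergence.

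The one point I would be careful about is that the norm $\lVert \cdot \rVert_{p,k}$ involves a supremum over \emph{all} units $u$, so I need each estimate to be uniform in $u$. The Fatou step on each finite set $F \subset \G_u$ (resp.\ $\G^u$) produces bounds independent of both $F$ and $u$, so taking the suprema at the very end is harmless; this is the only place where the argument deviates from the classical proof that $\ell^p$ is complete.
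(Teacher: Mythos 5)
Your proof is correct and follows essentially the same strategy as the paper: use $\lVert \cdot \rVert_\infty \le \lVert \cdot \rVert_{p,k}$ to obtain a candidate limit $f \in C_0(\G)$, then run a Fatou-type argument over finite subsets of each fiber $\G_u$ and $\G^u$ to show $f \in L_{p,k}(\G)$ and that $f_m \to f$ in norm. Your final step is in fact a slight streamlining of the paper's, which instead identifies the fiberwise $\ell^p$-limits of $(1+l)^k f_n$ via a subsequence before passing to the limit in the Cauchy estimate; applying the finite-subset device directly to the differences $f_n - f_m$, as you do, makes that detour unnecessary.
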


If $\G$ is an étale groupoid endowed with a locally bounded length function, then we clearly have that 
\begin{equation*}
	C_c (\G) \subset L_{p,k+1} (\G) \subset L_{p,k}(\G) \subset C_0 (\G) ,
\end{equation*}
for all $k \in \N_0 $. Let
\begin{equation*}
	S_{p}^{l}(\G) := \bigcap_{k = 0}^{\infty} L_{p,k}(\G) .
\end{equation*}
Then $S_{p}^{l} (\G) \subset C_0 (\G)$ and is a Fréchet space under the locally convex topology determined by the increasing family of norms $\{ \lVert \cdot \rVert_{p,k} \}_{k \in \N_0}$. Adopting the usual terminology from  the group case, we call $S_{p}^{l}(\G)$ the space of rapidly decreasing functions on $\G$ with respect to the locally bounded length function $l$. 

\begin{definition} \label{def: RD definition}
	Let $p \in [1, \infty)$. An étale groupoid $\G$ is said to have \emph{property $RD_p$ with respect to a locally bounded length function} $l$ if there exist a positive constant $C > 0$ and $k \in \N_0$ such that for all $f \in C_c (\G)$, we have $\lVert f \rVert_{\Fp(\G)} \leq C \lVert f \rVert_{p,k} = C \big\lVert f(1+l)^k \big\rVert _{I^p}$.
	
	We shall say that an étale groupoid has \emph{property $RD_p$} if it has property $RD_p$ with respect to some locally bounded length function.
\end{definition}
As explained in \cite[Section 3]{hou2017spectral}, if a length function $l_1$ polynomially dominates another length function $l_2$, in the sense that there is $c > 0$ and $k \in \N$ such that $l_2 (x) \leq c (1 + l_1 (x))^k$ for all $x \in \G$, then $\G$ has property $RD_p$ with respect to $l_1$ if it has property $RD_p$ with respect to $l_2$. Moreover, if $\G$ is a compactly generated groupoid, meaning that $\G = \bigcup_{n = 1}^{\infty} K^n$ for some symmetric compact set $K \subset \G$, then the canonically associated word length function given by $l_K (x) := \min\{ n \colon x \in \bigcup_{i = 1}^{n} K^i  \}$ when $x \notin \G^{(0)}$ and $l_K (u) := 0$ for $u \in \G^{(0)}$, is easily seen to dominate any locally bounded length function on $\G$. In particular, if $\G$ is compactly generated and has property $RD_p$, then $\G$ has property $RD_p$ with respect to any locally bounded word length function. 

When $p = 2$, Definition \ref{def: RD definition} is the same as \cite[Definition 3.2]{hou2017spectral}, and it naturally generalizes the well known property $RD$ for discrete groups. We refer the reader to \cite{SurveyOnRDChatterji, ConnectedLieGroupsAndRD, ChatterjoSomegreometricGroups, kTheoryAndRDprop, JolissaintRD, kTheoryOfBAandRD} for information and examples of such groups.

Like in the group case, there are several equivalent definitions one can give for property $RD_p$. First of all, the argument in \cite[Lemma 3.3]{hou2017spectral} generalizes immediately to the statement that when $\G$ has property $RD_p$, there exist for each $f \in S_{p}^{l}(\G)$ an element $a \in \Fp(\G)$ such that $j_p (a) = f $, and thus $S_{p}^{l}(\G)$ is included in $\Fp(\G)$ via $\iota = j_{p}^{-1} $. Conversely, when $\iota = j_{p}^{-1}$ is an inclusion, it is a closed map and this ensures that $\G$ has property $RD_p$. We record this in the next proposition, giving the first equivalent definition of $RD_p $.

\begin{proposition} \label{prop: Frechet space included in Fp if and only if RD}
	Let $\G$ be an étale groupoid and let $p \in [1, \infty) $. Then $\G$ has property $RD_p$ if and only if $S_{p}^{l}(\G)$ is contained in $\Fp(\G)$.
\end{proposition}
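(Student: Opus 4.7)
The plan is to prove the two implications separately. The backward direction is a direct application of the closed graph theorem, while the forward direction mimics the argument of \cite[Lemma 3.3]{hou2017spectral}, combining a Fatou-type extension of the $RD_p$ estimate with a decomposition of $f$ along level sets of $l$.

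For $(\Leftarrow)$, suppose $S_p^l(\G) \subset \Fp(\G)$ via $j_p^{-1}$, giving a linear map $\iota \colon S_p^l(\G) \to \Fp(\G)$. I would verify that $\iota$ is closed: if $f_n \to f$ in $S_p^l(\G)$ and $\iota(f_n) \to a$ in $\Fp(\G)$, then $f_n \to f$ uniformly (using $\|\cdot\|_\infty \leq \|\cdot\|_{p,0}$), while simultaneously $j_p(\iota(f_n)) = f_n \to j_p(a)$ uniformly by the contractivity of $j_p$ from Proposition \ref{prop: identification of Fp functions with C0 functions}; uniqueness of uniform limits then forces $j_p(a) = f$, i.e.\ $\iota(f) = a$. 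The closed graph theorem for Fréchet spaces produces constants $C > 0$ and $k \in \N_0$ with $\|\iota(f)\|_{\Fp(\G)} \leq C\|f\|_{p,k}$ for all $f \in S_p^l(\G)$, and restricting to $C_c(\G) \subset S_p^l(\G)$ (on which $\iota$ is the identity) yields exactly property $RD_p$.

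For $(\Rightarrow)$, suppose $RD_p$ holds with constants $C, k$ and let $f \in S_p^l(\G)$. First I extend the $RD_p$ estimate to $f$ at the level of convolution operators: for each unit $u$ and $\xi \in C_c(\G_u)$, the sum defining $f \ast \xi$ is pointwise finite. Approximating $f$ pointwise by $g_n = f\phi_n \in C_c(\G)$ with $\phi_n \in C_c(\G)$ and $0 \leq \phi_n \nearrow 1$, applying $RD_p$ to each $g_n$, and invoking Fatou's lemma in $\ell^p(\G_u)$ gives $\|f \ast \xi\|_p \leq C\|f\|_{p,k}\|\xi\|_p$, uniformly in $u$. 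This defines a bounded operator family $\Lambda(f) = (\lambda_u(f))_u$ on $\bigoplus_u \ell^p(\G_u)$ with uniform norm at most $C\|f\|_{p,k}$. To place $\Lambda(f)$ in $\Fp(\G)$ I would decompose $f = \sum_{n \geq 0} f_n$ where $f_n := f\chi_{\{n \leq l < n+1\}}$; using $(1+l)^{2p} \geq (1+n)^{2p}$ on $\supp f_n$, one gets $\|f_n\|_{p,k} \leq (1+n)^{-2}\|f\|_{p,k+2}$, so that $\sum_n \Lambda(f_n)$ converges absolutely in operator norm to $\Lambda(f)$. Since each $\Lambda(f_n)$ is a norm limit of images of $C_c(\G)$ functions (via continuous cutoffs of $l$ composed with a Urysohn exhaustion of $\G$), closedness of $\Fp(\G)$ gives $\Lambda(f) \in \Fp(\G)$, and $j_p(\Lambda(f)) = f$ follows from Lemma \ref{lem: interpretation of Fp elements as convolution operators}.

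The main obstacle is the last step in $(\Rightarrow)$: placing each piece $\Lambda(f_n)$ inside $\Fp(\G)$. The truncation $f_n$ is in general neither continuous nor compactly supported, so an approximation by $C_c(\G)$ functions with controlled $\|\cdot\|_{p,k}$-norm is required. This is clean when the level sets $\{l \leq n+1\}$ have compact closure, for instance when $l$ is a proper length function or the canonical word length on a compactly generated groupoid, where smooth cutoffs of $l$ combined with Urysohn functions do the job. The authors signal that this is precisely the content of Hou's argument in \cite[Lemma 3.3]{hou2017spectral}, which extends from $p=2$ to general $p \in [1,\infty)$ without essential change.
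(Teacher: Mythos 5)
Your overall strategy is the same as the paper's: the backward implication is the closed graph theorem applied to $\iota = j_p^{-1}$, and the forward implication is deferred to (a generalization of) Hou's Lemma 3.3, which is exactly the route the paper takes. Your backward direction is complete and correct: the closedness check via uniqueness of uniform limits, the extraction of constants $C$ and $k$ from continuity of a linear map out of a Fr\'echet space whose topology is given by an increasing sequence of norms, and the restriction to $C_c(\G)\subset S_p^l(\G)$ are precisely the intended argument.

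In the forward direction you have correctly isolated the one step that does not follow formally from the $RD_p$ estimate: showing that the bounded operator family $\Lambda(f)$ (equivalently, each truncation $\Lambda(f_n)$) lies in $\Fp(\G)$, i.e.\ in the operator-norm closure of $\lambda(C_c(\G))$, and not merely in $\prod_u B(\ell^p(\G_u))$. This gap is genuine and not just a matter of properness of $l$ being "clean": a ball $B_l(R)$ always contains the (possibly non-compact) unit space, and a function $g\in C_0(\G)$ supported in $B_l(R)$ with $\lVert g\rVert_{I^p}<\infty$ need not be approximable in $\lVert\cdot\rVert_{I^p}$ by elements of $C_c(\G)$, because the fiberwise tails $\sum_{x\in\G_u\setminus K}|g(x)|^p$ need not tend to $0$ uniformly in $u$ as $K$ exhausts the support. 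Concretely, for the group bundle $\G=\bigsqcup_{u\in\N}\Z/N_u\Z$ with $N_u\to\infty$, the zero length function and $p=1$, the groupoid has $RD_1$ and the function equal to $1/N_u$ on the $u$-th fibre lies in $S_1^l(\G)$; yet any $a\in F_{\lambda}^{1}(\G)$ is a norm limit of finitely supported functions, the fibres do not interact, so $\lVert\lambda_u(a)\rVert\to 0$ as $u\to\infty$, while $j_1(a)|_{\G_u}=\lambda_u(a)\delta_u$ would need $\ell^1$-norm equal to $1$ on every fibre. So some properness or uniform-tightness hypothesis really is being used, and your proposed remedy (compactness of the balls $B_l(R)$) is the right one but is not among the stated hypotheses. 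To be fair, the paper's own justification of this direction is the single assertion that Hou's argument "generalizes immediately," so your write-up is no less complete than the source; it simply makes explicit a point the paper leaves unaddressed.
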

There is yet another characterization, at least when the length function is continuous. Given a length function $l$ on a groupoid $\G$, we define for $R > 0$, the balls $B_l (R) := \{ x \in \G \colon l(x) \leq R \}$. 
\begin{proposition} \label{prop: RD equivalent to functions supported in R-balls...}
	Let $\G$ be an étale groupoid endowed with a locally bounded length function $l$ and let $p \in [1, \infty)$. If $\G$ has property $RD_p$, then there exist positive constants $C,D > 0$ such that for all $f \in C_c (G)$ with support contained in $B_l (R) $, where $R \geq 1$, we have 
	\begin{equation*}
		\lVert f \rVert_{\Fp(\G)} \leq C R^D \lVert f \rVert_{I^p} .
	\end{equation*}
	The converse holds if $l$ is continuous.	
	\begin{proof}
		Assume first that $\G$ has property $RD_p $, for constants $C > 0$ and $k \in \N_0$ as in Definition \ref{def: RD definition}. Let $f \in C_c(\G)$ be supported in $B_l (R)$, for some $R \geq 1$. We have that $\lVert f \rVert_{\Fp(\G)} \leq C \lVert f \rVert_{p,k} $. Fix any unit $u \in \G^{(0)}$; then 
		\begin{align*}
			C \Big(\sum_{x \in \G_u} |f(x)|^{p} (l(x) + 1)^{pk}\Big)^{1/p} & \leq C \Big(\sum_{x \in \G_u} |f(x)|^p (R + 1)^{pk}\Big)^{1/p}  \\ 
			& \leq 2^k R^k C \Big( \sum_{x \in \G_u} |f(x)|^p \Big)^{1/p}.
		\end{align*}
		The same estimate holds when summing over $\G^u$. Thus, after taking the supremum over all $u \in \G^{(0)}$, we obtain 
		\begin{equation*}
			\lVert f \rVert_{\Fp(\G)} \leq C \lVert f \rVert_{p,k} \leq 2^k R^k C \lVert f \rVert_{I^p}.
		\end{equation*} 
		
		\
		
		If the length function is continuous, the sets $U_n = l^{-1}(n, n+2)$, for $n \geq 1$, and $U_0 = l^{-1} ([0,2))$, are open sets such that $\G = \bigcup_{n = 0}^{\infty} U_n$. Assume that $C, D > 0$ are the constants for which 
		\begin{equation*}
			\lVert f \rVert_{\Fp(\G)} \leq C R^D \lVert f \rVert_{I^p} ,
		\end{equation*}
		for all $f \in C_c (\G)$ supported in $B_l (R)$, with $R \geq 1$. Let $f \in C_c(\G)$ be arbitrary. There exists $N \in \N$ such that $\supp (f) \subset \bigcup_{n = 0}^{N} U_n $, and $f$ vanishes on $l^{-1}(N+1 , \infty)$. Let $\{\rho_n \}_{n = 0}^{N} \cup \{ \xi \}$ be a partition of unity subordinate to the finite open cover $\{ U_n \}_{n = 0}^{N} \cup \{ l^{-1} ( (N+1 , \infty) ) \} $ of $\G $. Thus, $\xi + \sum_{n = 0}^{N} \rho_n = 1 $, $\supp (\rho_n) \subset U_n $, $\supp (\xi) \subset l^{-1} ( (N+1 , \infty) ) $, and the functions $\xi$ and $\rho_n$ are continuous, for $0 \leq n \leq N$. We may write 
		\begin{equation*}
			f = f \cdot \Big(\xi + \sum_{n = 0}^{N} \rho_n \Big)  = \sum_{n = 0}^{N} f \rho_n ,
		\end{equation*}
		and $f \rho_n $ is supported in $B_l (n+2)$, for $0 \leq n \leq N$; thus 
		\begin{align*}
			\lVert f \rVert_{\Fp (\G)} &\leq \sum_{n = 0}^{N} \lVert f \rho_n \rVert_{\Fp(\G)} \leq C \sum_{n = 0}^{N} (n + 2)^D \lVert f \rho_n \rVert_{I^p} \leq 2^D C \sum_{n = 0}^{N} (n+1)^D \lVert f \rho_n \rVert_{I^p} \\ 
			&= 2^D C \sum_{n = 0}^{N} (n +1)^{-2} \big\lVert f \rho_n (n+1)^{D+2} \big\rVert_{I^p} \leq 2^D C \sum_{n = 0}^{N} (n +1)^{-2} \big\lVert f \rho_n (1+l)^{D+2} \big\rVert_{I^p} \\
			&\leq 2^D C \sum_{n = 0}^{N} (n +1)^{-2} \big\lVert f (1 + l)^{D+2} \big\rVert_{I^p} \leq 2^D C \Big( \sum_{n = 0}^{\infty} (n+1)^{-2} \Big) \big\lVert f \big\rVert_{p, D+2}
		\end{align*}
		In the estimate above, we used in the fifth step that $n \leq l$ on the support of $f \rho_n$ and in the sixth that $\rho_n \leq 1$, for $0 \leq n \leq N$. 
	\end{proof}
\end{proposition}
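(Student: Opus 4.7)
For the forward direction, the plan is essentially a one-line estimate. Assuming property $RD_p$ holds with constant $C > 0$ and exponent $k \in \N_0$, so that $\lVert f\rVert_{\Fp(\G)} \leq C\lVert f(1+l)^k\rVert_{I^p}$ for every $f \in C_c(\G)$, I would take $f$ supported in $B_l(R)$ with $R \geq 1$, observe that $(1+l(x))^k \leq (1+R)^k \leq (2R)^k$ everywhere on $\supp(f)$, and pull the scalar out to conclude $\lVert f\rVert_{\Fp(\G)} \leq 2^k C R^k \lVert f\rVert_{I^p}$. So the bound holds with $D = k$ and constant $2^k C$.

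The converse is where the work is, and the idea is to decompose an arbitrary $f \in C_c(\G)$ into annular pieces to which the ball-support hypothesis can be applied. Using continuity of $l$, I would set $U_0 = l^{-1}([0,2))$ and $U_n = l^{-1}((n, n+2))$ for $n \geq 1$, obtaining an open cover of $\G$ with the property that each $U_n$ lies inside $B_l(n+2)$. Since $f$ has compact support, the cover reduces to finitely many sets on $\supp(f)$; choose a continuous partition of unity $\{\rho_n\}$ subordinate to this finite subcover (adjoined with an extra open set on which $f$ vanishes, as in the author's setup) and write $f = \sum_{n} f\rho_n$ as a finite sum. Each $f\rho_n$ is supported in $B_l(n+2)$, so the hypothesis yields $\lVert f\rho_n\rVert_{\Fp(\G)} \leq C(n+2)^D\lVert f\rho_n\rVert_{I^p}$.

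The main obstacle is then to sum these estimates in a way that produces a bound of the required form $\lVert f\rVert_{\Fp(\G)} \leq C'\lVert f(1+l)^{k'}\rVert_{I^p}$, since the naive triangle-inequality sum has a factor $(n+2)^D$ and a priori no way to absorb it into the weight $(1+l)^{k'}$. The trick I would use is to split $(n+2)^D \lesssim (n+1)^{-2}(n+1)^{D+2}$: the factor $(n+1)^{D+2}$ can be absorbed using the fact that $n \leq l$ on $\supp(f\rho_n) \subseteq U_n$, which gives $(n+1)^{D+2} \leq (1+l)^{D+2}$ on that set, while the factor $(n+1)^{-2}$ makes the sum over $n$ converge. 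Combined with $0 \leq \rho_n \leq 1$ (so that $\lVert f\rho_n (1+l)^{D+2}\rVert_{I^p} \leq \lVert f(1+l)^{D+2}\rVert_{I^p}$), this delivers
\begin{equation*}
\lVert f\rVert_{\Fp(\G)} \leq 2^D C\bigg(\sum_{n=0}^{\infty}(n+1)^{-2}\bigg)\lVert f\rVert_{p, D+2},
\end{equation*}
which is property $RD_p$ with exponent $k = D+2$. The continuity of $l$ is critical because without it the annular sets $U_n$ need not be open, and no subordinate partition of unity would be available to perform the decomposition.
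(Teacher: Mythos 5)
Your proposal is correct and follows essentially the same route as the paper: the forward direction is the same scalar bound $(1+l)^k \leq (2R)^k$ on the support, and the converse uses the identical annular cover $U_n = l^{-1}((n,n+2))$, a subordinate partition of unity, and the same trick of splitting $(n+2)^D \lesssim (n+1)^{-2}(n+1)^{D+2}$ to absorb the polynomial factor into the weight $(1+l)^{D+2}$ while keeping the sum over $n$ convergent. There is nothing to add.
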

In the case of discrete groups, the equivalent statement in Proposition \ref{prop: RD equivalent to functions supported in R-balls...} is the definition of $RD_p$ used in \cite{kTheoryOfBAandRD}. An interesting observation is that property $RD_p$ is automatic if $RD_d$ holds for some $d \geq p$. This is shown in the next proposition, whose argument is a generalization of \cite[Theorem 4.4]{kTheoryOfBAandRD}.
\begin{proposition}\label{prop : RD inherited below (étale case)}
	If an étale groupoid $\G$ has property $RD_d $, for some $d \in [1, \infty)$, then $\G$ has property $RD_{p}$, for all $1 \leq p \leq d$. 
	\begin{proof}
		Put $\alpha = p/d$. For any $f \in C_c (\G)$, we define $f_{\alpha} (x) = | f(x) |^{\alpha}$. Then for any $u \in \G^{(0)}$ and any $f \in C_c (\G)$, we have $\lVert f_{\alpha} \rVert_{\ell^{d} (\G_u)}^{d} = \lVert f \rVert_{\ell^{p}(\G_u)}^{p}$, and $\lVert f_{\alpha} \rVert_{\ell^{d} (\G^{u})}^{d} = \lVert f \rVert_{\ell^{p}(\G^{u})}^{p}$. Since $\alpha \leq 1$, we have for any finite collection $a_i \geq 0$ that $(a_1 + \dots + a_n)^{\alpha} \leq a_{1}^{\alpha} + \dots + a_{n}^{\alpha}$, and so for $\xi \in C_c (\G_u)$, we have the inequality 
		\begin{align*}
			(| f | \ast | \xi |)_{\alpha} (x) &= ( | f | \ast |\xi | (x) )^{\alpha} = \Big( \sum_{y \in \G_{u}} | f | (x y^{-1}) | \xi | (y) \Big)^{\alpha} \\ 
			&\leq \sum_{y \in \G_{u}} | f |^{\alpha} (x y^{-1}) | \xi |^{\alpha} (y) = f_{\alpha} \ast \xi_{\alpha} (x).
		\end{align*}
		Using the above and the fact that $\G$ has $RD_d$, we obtain for any $\xi \in C_c (\G_u)$ that
		\begin{align*}
			\lVert f \ast \xi \rVert_{\ell^{p}(\G_u)}^{p} &\leq \lVert | f | \ast |\xi | \rVert_{\ell^{p}(\G_u)}^{p} = \lVert (| f | \ast | \xi |)_{\alpha} \rVert_{\ell^{d}(\G_u)}^{d} \\ 
			&\leq \lVert f_{\alpha} \ast \xi_{\alpha} \rVert_{\ell^{d}(\G_u)}^{d} \leq \lVert f_{\alpha} \rVert_{B(\ell^d (\G_u))}^{d} \lVert \xi_{\alpha} \rVert_{\ell^{d}(\G_u)}^{d} \\ 
			&\leq C^d \lVert f_{\alpha} (1 + l)^k \rVert_{I_d}^{d} \lVert \xi_{\alpha} \rVert_{\ell^{d}(\G_u)}^{d} = C^d \lVert f_{\alpha} (1 + l)^k \rVert_{I_d}^{d} \lVert \xi \rVert_{\ell^{p}(\G_u)}^{p} ,
		\end{align*}
		where $C > 0$ and $k \in \N_0$ are the constants from $\G$ having $RD_d$. From this we get that $ \lVert f \rVert_{F_{\lambda}^{p}(\G)}^{p} \leq C^d \lVert f_{\alpha} (1 + l)^k \rVert_{I_d}^{d} $. If we let $n \in \N$ be such that $n p \geq d$, then it is easy to see that $ \lVert f_{\alpha} (1+l)^k \rVert_{I_d}^{d} \leq \lVert f (1 + l)^{nk} \rVert_{I^p}^{p} $, and thus we get
		\begin{equation*}
			\lVert f \rVert_{F_{\lambda}^{p}(\G)} \leq C^{d/p} \lVert f (1 + l)^{nk} \rVert_{I^p} ,
		\end{equation*}
		showing that $\G$ has property $RD_p$.
	\end{proof}
\end{proposition}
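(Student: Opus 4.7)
The plan is to adapt the classical discrete-group argument (in the spirit of Liao--Yu) to the étale groupoid setting by reducing the $L^p$-convolution estimate to an $L^d$-convolution estimate via a pointwise power trick. Fix $1 \leq p \leq d$ and set $\alpha = p/d \in (0,1]$. For any $f \in C_c(\G)$, I would work with the pointwise power $f_\alpha(x) := |f(x)|^\alpha$, which satisfies the fiberwise identities $\lVert f_\alpha \rVert_{\ell^d(\G_u)}^d = \lVert f \rVert_{\ell^p(\G_u)}^p$ and $\lVert f_\alpha \rVert_{\ell^d(\G^u)}^d = \lVert f \rVert_{\ell^p(\G^u)}^p$ for every unit $u$.

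The key preliminary step is the pointwise inequality $(|f|\ast|\xi|)_\alpha(x) \leq (f_\alpha \ast \xi_\alpha)(x)$, valid for all $x\in\G_u$ and $\xi\in C_c(\G_u)$. This follows from the subadditivity $(\sum a_i)^\alpha \leq \sum a_i^\alpha$ that holds for nonnegative summands when $\alpha \leq 1$, applied to the defining sum of the convolution (which ranges over $y \in \G_{s(x)} = \G_u$, exactly as in the group case). With this in hand, I would chain the estimates, for $\xi\in C_c(\G_u)$:
\begin{equation*}
\lVert f\ast\xi\rVert_{\ell^p(\G_u)}^p \leq \lVert\,|f|\ast|\xi|\,\rVert_{\ell^p(\G_u)}^p = \lVert (|f|\ast|\xi|)_\alpha\rVert_{\ell^d(\G_u)}^d \leq \lVert f_\alpha \ast \xi_\alpha\rVert_{\ell^d(\G_u)}^d,
\end{equation*}
then invoke property $RD_d$ with constants $C>0$, $k\in\N_0$ to bound the last term by $C^d \lVert f_\alpha(1+l)^k\rVert_{I^d}^d\, \lVert \xi_\alpha\rVert_{\ell^d(\G_u)}^d = C^d \lVert f_\alpha(1+l)^k\rVert_{I^d}^d\, \lVert \xi\rVert_{\ell^p(\G_u)}^p$.

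Taking suprema over $u\in\G^{(0)}$ and over unit-norm $\xi$ yields $\lVert f\rVert_{F_\lambda^p(\G)}^p \leq C^d \lVert f_\alpha(1+l)^k\rVert_{I^d}^d$. The final step is to control the right-hand side by a seminorm of $f$ itself: choosing $n\in\N$ with $np \geq d$, one has $(1+l)^{kd} \leq (1+l)^{nkp}$ pointwise since $1+l\geq 1$, so
\begin{equation*}
\lVert f_\alpha(1+l)^k\rVert_{I^d}^d = \sup_u \sum_{x\in\G_u} |f(x)|^p (1+l(x))^{kd} \leq \lVert f(1+l)^{nk}\rVert_{I^p}^p,
\end{equation*}
and similarly on $\G^u$-fibers. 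Combining these estimates yields $\lVert f\rVert_{F_\lambda^p(\G)} \leq C^{d/p}\lVert f\rVert_{p,nk}$, so $\G$ has property $RD_p$.

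The only delicate point is verifying that the subadditivity-under-$\alpha$-power argument transfers cleanly to the groupoid convolution; but since the convolution at $x$ is a sum over a single discrete fiber $\G_{s(x)}$, the scalar inequality applies verbatim, and nothing about isotropy or multiple units interferes. The rest is bookkeeping with the $I^p$-norm and the length function.
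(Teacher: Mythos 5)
Your proposal is correct and follows essentially the same route as the paper's own proof: the pointwise power trick $f_\alpha(x)=|f(x)|^\alpha$ with $\alpha=p/d$, the subadditivity inequality $(|f|\ast|\xi|)_\alpha\leq f_\alpha\ast\xi_\alpha$, the application of $RD_d$ to $f_\alpha$, and the final comparison $\lVert f_\alpha(1+l)^k\rVert_{I^d}^d\leq\lVert f(1+l)^{nk}\rVert_{I^p}^p$ for $np\geq d$. No gaps; your explicit justification of the last inequality via $(1+l(x))^{kd}\leq(1+l(x))^{nkp}$ is in fact slightly more detailed than the paper's.
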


With respect to natural length functions, property $RD_p$ for transformation groupoids formed from the data of compact Hausdorff spaces and discrete groups, implies property $RD_p$ for the associated discrete groups. This was also proved independently in \cite[Proposition 5.3]{weygandt2023rapid} when $p = 2$. Let us first recall the definition of a transformation groupoid.
\begin{definition}
	Let $X$ be a compact Hausdorff space and $\Gamma$ a discrete group. Suppose there is an action of $G$ on $X$. We write $\gamma \cdot x$ for the action at $(\gamma , x) $, where $\gamma \in \Gamma$ and $x \in X $. The transformation groupoid is denoted by $X \rtimes \Gamma$. As a topological space, $X \rtimes \Gamma$ is just $X \times \Gamma$, and the groupoid operations are given by $(\gamma \cdot x, \tau) (x, \gamma) = (x, \tau \gamma)$, and $(x, \gamma)^{-1} = (\gamma \cdot x, \gamma^{-1})$. Thus, the unit space is $X \times \{e\} $, where $e$ is the identity element of $\Gamma$, and the source and range maps become respectively $s(x, \gamma) = (x,e)$, $r(x, \gamma) = (\gamma \cdot x , e)$. In this way, $X \rtimes \Gamma$ becomes an étale groupoid, and its unit space is identified with $X$ in the obvious way. Notice that the source fiber at $x \in X$ is given by $(X \rtimes \Gamma)_x = \{ (x, \gamma) \colon \gamma \in \Gamma \}$.
\end{definition}

\begin{proposition}
	Let $p \in [1, \infty)$, $X$ a compact Hausdorff space and let $\Gamma$ be a discrete group acting on $X$. Assume that $\Gamma$ is endowed with a locally bounded length function $l \colon \Gamma \to \R_+$. The transformation groupoid $\G = X \rtimes \Gamma$ then has a natural length function $(x, \gamma) \mapsto l(\gamma)$ which we still denote by $l$. With respect to $l$, if the transformation groupoid $\G$ has $RD_p $, then $\Gamma$ has property $RD_p $.
	\begin{proof}
		Suppose that the transformation groupoid $\G$ has property $RD_p$ with constants $C > 0$ and $k \in \N_0$ as in Definition \ref{def: RD definition}. Fix any positive $f \in \C \Gamma$, and define $\tilde{f} (x, \gamma) = f(\gamma)$. Since $X$ is compact, $\tilde{f}$ is continuous compactly supported on $\G$, and so 
		\begin{equation*} \label{eq: groupoid norm bounded by group Ip norm}
			\lVert \tilde{f} \rVert_{\Fp(\G)} \leq C \big\lVert \tilde{f} (1 + l)^k \big\rVert_{I^p} = C \big\lVert f (1 + l)^k \big\rVert_{\ell^p (\Gamma)} .
		\end{equation*}
		Fixing $x \in X $, if $g_x \in \ell^p(\G_x) $ is identified with $g \in \ell^p (\Gamma)$, we have that  
		\begin{align*}
			\lambda_x (\tilde{f}) (g_x) (x,\gamma) &= \sum_{\mu \in \Gamma} \tilde{f}((x,\gamma) (x, \mu)^{-1}) g_x (x, \mu) = \sum_{\mu \in \Gamma} \tilde{f}((x,\gamma) (\mu \cdot x, \mu^{-1})) g_x (x, \mu) \\
			&= \sum_{\mu \in \Gamma} \tilde{f}((\mu \cdot x, \gamma \mu^{-1})) g_x (x, \mu) = \sum_{\mu \in \Gamma} f(\gamma \mu^{-1}) g(\mu) = \lambda(f) (g) (\gamma) ,
		\end{align*}
		where $\lambda$ is the usual left regular representation of the group $\Gamma $. Since for any such $g_x \in \G_x $, we have that $\lVert g_x \rVert_{\ell^p (\G_x)} = \lVert g \rVert_{\ell^p (\Gamma)} $, we see that 
		\begin{equation*}
			\lVert \lambda_x (\tilde{f}) \rVert = \sup_{\lVert g_x \rVert_{\ell^p (\G_x)} \leq 1} \lVert \lambda_x (\tilde{f}) (g_x) \rVert_{\ell^p (\G_x)} = \sup_{\lVert g \rVert_{\ell^p (\Gamma)} \leq 1} \lVert \lambda(f) (g) \rVert_{\ell^p (\Gamma)} = \lVert \lambda (f) \rVert ,
		\end{equation*}
		and so
		\begin{equation*}
			\lVert f \rVert_{\Fp(\Gamma)} = \lVert \lambda(f) \rVert_{B(\ell^p (\Gamma))} = \sup_{x \in X} \lVert \lambda_x (\tilde{f}) \rVert_{B(\ell^p (\G_x))} = \lVert \tilde{f} \rVert_{\Fp(\G)} \leq C \big\lVert f (1 + l)^k \big\rVert_{\ell^p (\Gamma)} ,
		\end{equation*}
		showing that $\Gamma$ has property $RD_p $. 
	\end{proof} 
\end{proposition}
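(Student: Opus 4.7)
The plan is to lift functions from $\Gamma$ to $\G = X \rtimes \Gamma$ and then transfer the $RD_p$ estimate on $\G$ back to an $RD_p$ estimate on $\Gamma$. Given $f \in \C \Gamma$ with finite support, define $\tilde{f} \colon \G \to \C$ by $\tilde{f}(x,\gamma) = f(\gamma)$. Since $X$ is compact and $f$ has finite support, $\tilde{f}$ is continuous with compact support, so $\tilde{f} \in C_c(\G)$. Because the length function on $\G$ is $(x,\gamma) \mapsto l(\gamma)$, the weighted function $\tilde{f}(1+l)^k$ is also constant in the $X$-direction.

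The first step is to identify the $I^p$-norm of $\tilde{f}(1+l)^k$ on $\G$ with the weighted $\ell^p$-norm of $f(1+l)^k$ on $\Gamma$. Since the source fiber at $x$ is $(X\rtimes\Gamma)_x = \{(x,\gamma) : \gamma \in \Gamma\}$ and the range fiber $(X \rtimes \Gamma)^x$ similarly biject with $\Gamma$, the fiberwise $\ell^p$-sums defining $\|\tilde{f}(1+l)^k\|_{I^p}$ do not depend on $x$, and both equal $\|f(1+l)^k\|_{\ell^p(\Gamma)}$. Hence $\|\tilde{f}(1+l)^k\|_{I^p} = \|f(1+l)^k\|_{\ell^p(\Gamma)}$.

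The second step is to identify the operator norm. Using the composition rule $(x,\gamma)(x,\mu)^{-1} = (\mu \cdot x, \gamma\mu^{-1})$ and unwinding the definition of $\lambda_x(\tilde{f})$ acting on $\ell^p(\G_x)$, one checks that under the canonical bijection $\ell^p(\G_x) \cong \ell^p(\Gamma)$, $\lambda_x(\tilde{f})$ is precisely the left regular representation $\lambda(f)$ of $\Gamma$ on $\ell^p(\Gamma)$. Thus $\|\lambda_x(\tilde{f})\| = \|\lambda(f)\|$ for every $x \in X$, and taking the supremum over $x$ gives $\|\tilde{f}\|_{\Fp(\G)} = \|f\|_{\Fp(\Gamma)}$.

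Combining the two identifications with the hypothesized $RD_p$-inequality $\|\tilde{f}\|_{\Fp(\G)} \le C\|\tilde{f}(1+l)^k\|_{I^p}$ for $\G$ yields $\|f\|_{\Fp(\Gamma)} \le C\|f(1+l)^k\|_{\ell^p(\Gamma)}$ for every positive finitely supported $f$, which extends to all of $\C \Gamma$ by decomposing arbitrary $f$ into its positive/negative real and imaginary parts (or by applying the estimate to $|f|$ and using the standard bound $\|f\|_{\Fp(\Gamma)} \le \||f|\|_{\Fp(\Gamma)}$ from the triangle inequality for sums). This is exactly property $RD_p$ for $\Gamma$. There is no genuine obstacle: the only point requiring care is the operator-norm identification, but this reduces to a direct computation using the transformation-groupoid composition law.
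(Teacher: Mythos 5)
Your proposal is correct and follows essentially the same route as the paper: lift $f$ to $\tilde f(x,\gamma)=f(\gamma)$, identify $\lVert \tilde f(1+l)^k\rVert_{I^p}$ with $\lVert f(1+l)^k\rVert_{\ell^p(\Gamma)}$, and check via the composition law that $\lambda_x(\tilde f)$ is the left regular representation $\lambda(f)$ under $\ell^p(\G_x)\cong\ell^p(\Gamma)$. Your closing remark on passing from positive to general $f$ is a small (harmless) addition the paper elides by simply fixing $f$ positive, even though the computation never uses positivity.
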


Next, we shall define what it means for an étale groupoid to have polynomial growth with respect to a length function. The definition we employ naturally generalizes the same notion for groups, and is taken from \cite{hou2017spectral} (see also \cite{NekrashevychGrowthOfEtalegGroupoids}). 

Fix a unit $u \in \G^{(0)}$, let $m \geq 0$, and define the sets 
\begin{equation*}
	B_{\G^u} (m) = \{ x \in \G^{u} \colon l(x) \leq m \} \text{ and } B_{\G_u} (m) = \{ x \in \G_u \colon l(x) \leq m \} .
\end{equation*}
Denoting by $| A |$ the cardinality of a set $A$, notice that since $l(x^{-1}) = l(x) $, for all $x \in \G$, we have $| B_{\G_u} (m) | = | B_{\G^u} (m) |$.

\begin{definition} \cite[Definition 3.1]{hou2017spectral} \label{def: polynomial growth}
	We say that an étale groupoid $\G$ is of \emph{polynomial growth with respect to a length function} $l$ if there are constants $c \geq 1$ and $r \geq 1$ such that for each $m \geq 0$, we have 
	\begin{equation*}
		\sup_{u \in \G^{(0)}} | B_{\G_u} (m)| = \sup_{u \in \G^{(0)}} | B_{\G^{u}} (m) | \leq c (1 + m)^r .
	\end{equation*}
	An étale groupoid $\G$ is said to be of \emph{polynomial growth} if it is of polynomial growth with respect to some length function.
\end{definition}

Étale groupoids of polynomial growth enjoy the following permanence properties outlined in the next lemma. We omit the proofs, since they are straightforward. 

\begin{lemma} \label{lem: permanence properties of polynomial growth}
	\
	\begin{itemize}
		\item[(1)] The étale groupoids $\G_1$ and $\G_2$ have polynomial growth with respect to length functions $l_1$ and $l_2$ respectively if and only if the groupoid $\G_1 \sqcup \G_2$ has polynomial growth with respect to the length function $l(g) = l_1 (g)$ if $g \in \G_1$ and $l(g) = l_2 (g)$ if $g \in \G_2$. 
		\item[(2)] If $\mathcal{H} \subset \G$ is a subgroupoid of an étale groupoid $\G$ which has polynomial growth with respect to $l$, then $\mathcal{H}$, with the restriction of $l$ as length function, has polynomial growth.
		\item[(3)] The étale groupoids $\G_1$ and $\G_2$ have polynomial growth with respect to length functions $l_1$ and $l_2$ respectively if and only if the étale groupoid $\G_1 \times \G_2$ has polynomial growth with respect to the length function $l(g,h) = l_1 (g) + l_2 (h)$.
		\item[(4)] If $\phi \colon \G \to \mathcal{H}$ is a bijective groupoid homomorphism between étale groupoids, and $\mathcal{H}$ has polynomial growth with respect to a length function $l$, then $\G$ has polynomial growth with respect to the length function $l \circ \phi \colon \G \to \R_+$.
	\end{itemize}
\end{lemma}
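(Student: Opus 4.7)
The plan is to verify each of the four items by direct source-fiber analysis, since all four reduce to tracking how source fibers and the given length function behave under disjoint union, restriction, product, and transport along a bijective homomorphism. Once that bookkeeping is done, the polynomial bound either trivially propagates or trivially restricts.

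For \textbf{(1)}, I would first check that $l$ is a length function on $\G_1 \sqcup \G_2$; this is immediate because composable pairs in a disjoint union of groupoids lie in a single component. Writing $(\G_1 \sqcup \G_2)^{(0)} = \G_1^{(0)} \sqcup \G_2^{(0)}$, for any unit $u \in \G_i^{(0)}$ the source fiber $(\G_1 \sqcup \G_2)_u$ equals $(\G_i)_u$ with $l$ restricting to $l_i$. Hence $\sup_u |B_{(\G_1 \sqcup \G_2)_u}(m)|$ is the maximum of the two individual suprema, which immediately yields the equivalence. For \textbf{(2)}, the restriction of $l$ to $\mathcal{H}$ still satisfies the length-function axioms. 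Any unit of $\mathcal{H}$ is also a unit of $\G$, and $\mathcal{H}_u \subseteq \G_u$, hence $|B_{\mathcal{H}_u}(m)| \leq |B_{\G_u}(m)| \leq c(1+m)^r$.

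For \textbf{(3)}, the forward direction uses that $(\G_1 \times \G_2)^{(0)} = \G_1^{(0)} \times \G_2^{(0)}$ and $(\G_1 \times \G_2)_{(u,v)} = (\G_1)_u \times (\G_2)_v$. Since $l_1(g) + l_2(h) \leq m$ forces both $l_1(g) \leq m$ and $l_2(h) \leq m$, the $m$-ball in the product embeds into $B_{(\G_1)_u}(m) \times B_{(\G_2)_v}(m)$, so polynomial bounds $c_i(1+m)^{r_i}$ on each factor multiply to the bound $c_1 c_2 (1+m)^{r_1 + r_2}$ on the product. For the converse, I would fix units $u_0 \in \G_1^{(0)}$ and $v_0 \in \G_2^{(0)}$ and apply (2) (combined with (4) via the obvious identification) to the subgroupoids $\G_1 \times \{v_0\}$ and $\{u_0\} \times \G_2$; since units have length zero, the product length $l$ restricts to $l_i$ on each slice, and polynomial growth of $\G_1 \times \G_2$ descends to each factor. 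For \textbf{(4)}, I would first verify that $l \circ \phi$ is a length function on $\G$: $\phi$ sends units to units because $uu = u$ forces $\phi(u)\phi(u) = \phi(u)$ and idempotents in a groupoid are units, after which multiplicativity and the identity $\phi(x^{-1}) = \phi(x)^{-1}$ yield the remaining axioms. Bijectivity of $\phi$ forces a bijection $\G^{(0)} \to \mathcal{H}^{(0)}$ as well as a bijection $\G_u \to \mathcal{H}_{\phi(u)}$ for each unit $u$, and this last bijection preserves length by construction, so $|B_{\G_u}(m)| = |B_{\mathcal{H}_{\phi(u)}}(m)|$ and the polynomial bound transfers. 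The only step requiring any actual care is the converse of (3), where one must invoke the zero-length-on-units property so that the slice subgroupoid inherits exactly $l_i$; everything else is routine bookkeeping, which is presumably why the authors omit the proof.
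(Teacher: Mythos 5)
Your proof is correct, and it fills in exactly the routine fiber-by-fiber bookkeeping that the paper omits (the authors simply state that the proofs are straightforward and give none). All four verifications — including the two points that need slight care, namely that $\phi$ carries units to units and restricts to a length-preserving bijection $\G_u \to \mathcal{H}_{\phi(u)}$ in (4), and that the zero length of units makes the slice subgroupoids in the converse of (3) inherit exactly $l_i$ — are handled correctly.
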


That polynomial growth is stable under Kakutani equivalence is a little less straightforward, so we include an argument for this.
\begin{lemma} \label{lem: polynomial growth stable under kakutani equivalence}
	For ample groupoids with $\sigma $-compact unit spaces, polynomial growth is stable under Kakutani equivalence. 
	\begin{proof}
		Assume that $\G$ is of polynomial growth, that $\G$ is Kakutani-equivalent to $\mathcal{H}$, and that both are ample groupoids with $\sigma$-compact unit spaces. By \cite[Theorem 3.12]{AmpleGroupoidsAndHomology}, 
		\begin{equation*}
			\G \times \mathcal{R} \cong \mathcal{H} \times \mathcal{R} ,
		\end{equation*}
		as étale groupoids. Here $\mathcal{R}$ is the full equivalence relation on $\N$. With the length function $(n,m) \mapsto |n - m |$, $\mathcal{R}$ has polynomial growth. By Lemma \ref{lem: permanence properties of polynomial growth}, $\G \times \mathcal{R}$ has polynomial growth, and then by the same lemma, so does $\mathcal{H} \times \mathcal{R}$. Since $\mathcal{H}$ can be identified as a subgroupoid of $\mathcal{H} \times \mathcal{R} $, Lemma \ref{lem: permanence properties of polynomial growth} applies once more to give that $\mathcal{H}$ has polynomial growth.
	\end{proof}
\end{lemma}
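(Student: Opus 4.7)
The plan is to reduce the statement to the permanence properties already established in Lemma \ref{lem: permanence properties of polynomial growth} by invoking the known characterization of Kakutani equivalence for ample groupoids with $\sigma$-compact unit spaces. The key external input is \cite[Theorem 3.12]{AmpleGroupoidsAndHomology}, which says that in this setting $\G$ and $\mathcal{H}$ are Kakutani equivalent if and only if $\G \times \mathcal{R} \cong \mathcal{H} \times \mathcal{R}$ as étale groupoids, where $\mathcal{R}$ denotes the full equivalence relation on $\N$. This transports the problem from one about restriction to full clopen subsets (for which polynomial growth is not obviously well-behaved, since $l$ need not restrict nicely) into one about products and isomorphisms, both of which are covered by Lemma \ref{lem: permanence properties of polynomial growth}.

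First, I would observe that $\mathcal{R}$ has polynomial growth with respect to the length function $l_{\mathcal{R}}(n,m) = |n-m|$: at any unit $(n,n)$ the ball of radius $R$ consists of pairs $(n, k)$ with $|n - k| \leq R$, giving at most $2R+1$ elements independently of $n$, so growth is linear. Next, using the assumption that $\G$ has polynomial growth with respect to some length function $l_{\G}$, item (3) of Lemma \ref{lem: permanence properties of polynomial growth} yields that $\G \times \mathcal{R}$ has polynomial growth with respect to the sum length function $l(g,(n,m)) = l_{\G}(g) + |n - m|$.

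Then I would transport this polynomial growth along the isomorphism $\G \times \mathcal{R} \cong \mathcal{H} \times \mathcal{R}$ via item (4) of the same lemma, concluding that $\mathcal{H} \times \mathcal{R}$ has polynomial growth. Finally, identifying $\mathcal{H}$ with the subgroupoid $\mathcal{H} \times \{(0,0)\}$ of $\mathcal{H} \times \mathcal{R}$ (whose induced length function is simply the $\mathcal{H}$-component), item (2) of the lemma produces polynomial growth for $\mathcal{H}$ and finishes the argument.

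There is no substantial analytic obstacle here beyond having the characterization from \cite{AmpleGroupoidsAndHomology} available; once that is invoked, the proof is a mechanical concatenation of permanence under products, isomorphisms, and passage to subgroupoids, each of which is already in hand.
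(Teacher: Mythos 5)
Your proposal is correct and follows essentially the same argument as the paper: apply the characterization $\G \times \mathcal{R} \cong \mathcal{H} \times \mathcal{R}$ from \cite[Theorem 3.12]{AmpleGroupoidsAndHomology}, note the linear growth of $\mathcal{R}$, and then chain together the product, isomorphism, and subgroupoid permanence properties from Lemma \ref{lem: permanence properties of polynomial growth}. No gaps.
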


Polynomial growth can in fact be seen as a strong form of rapid decay, as is shown in the next proposition. This is \cite[Proposition 3.5]{hou2017spectral} adapted to our more general case.
\begin{proposition} \label{prop: polynomial growth implies RD_q for all q}
	If $\G$ is an étale groupoid which has polynomial growth with respect to a locally bounded length function $l$, then $\G$ has property $RD_p$ with respect to $l$, for every $p \in [1, \infty )$.
	\begin{proof}
		Let $f \in C_c (\G)$, $c \geq 1$ and let $r \geq 1$ be an integer such that $\sup_{u \in \G^{(0)}} | B_{\G_u} (m)| \leq c (1 + m)^r $. Any étale groupoid has $RD_1$. Let $p \in (1, \infty)$ and let $q$ be the conjugate exponent. Put $k := 2 + r$ and fix any unit $u \in \G^{(0)}$. We have that
		\begin{align*}
			\sum_{x \in \G_u} (1 + l(x))^{-qk} & \leq \sum_{n = 0}^{\infty} \sum_{\substack{x \in \G_u \\ n \leq l(x) \leq n+1}} (1 + l(x))^{-qk} \leq \sum_{n = 0}^{\infty} | B_{\G_u}(n+1) | (1 + n)^{-qk} \\
			& \leq c \sum_{n = 0}^{\infty} (2 + n)^r (1 + n)^{-qk} \leq 2^r c \sum_{n = 0}^{\infty} (1 + n)^{-2q} =: \tilde{c}. 
		\end{align*}
		By Hölder's inequality,
		\begin{align*}
			\sum_{x \in \G_u} | f(x) | &= \sum_{x \in \G_u} | f(x) | (1 + l(x))^k (1 + l(x))^{-k} \\
			&\leq \Big( \sum_{x \in \G_u} | f(x) |^p (1 + l(x))^{p k} \Big)^{1/p} \Big( \sum_{x \in \G_u} (1 + l(x))^{-q k} \Big)^{1/q} \leq \tilde{c}^{1/q} \lVert f \rVert_{p,k}.
		\end{align*}
		Moreover, we have
		\begin{align*}
			\sum_{x \in \G^u} |f(x) | = \sum_{x \in \G_u} | f^{\ast} (x) | \leq \tilde{c}^{1/q} \lVert f^{\ast} \rVert_{p,k} = \tilde{c}^{1/q} \lVert f \rVert_{p,k}.
		\end{align*}
		Since $u \in \G^{(0)}$ was arbitrary, using Lemma \ref{lem: basic estimaste involving sup-norm, operator-norm and I-norm}, we obtain
		\begin{equation*}
			\lVert f \rVert_{\Fp(\G)} \leq \lVert f \rVert_I \leq \tilde{c}^{1/q} \lVert f \rVert_{p,k} , 
		\end{equation*}
		and this shows that $\G$ has property $RD_p$. 
	\end{proof}
\end{proposition}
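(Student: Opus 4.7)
The plan is to bound the reduced $L^p$-norm $\|f\|_{F^p_\lambda(\G)}$ by the $I$-norm (via Lemma \ref{lem: basic estimaste involving sup-norm, operator-norm and I-norm}), and then estimate the $I$-norm in terms of $\|f\|_{p,k}$ by a Hölder trick whose error term is controlled by polynomial growth. The case $p=1$ is already contained in Lemma \ref{lem: basic estimaste involving sup-norm, operator-norm and I-norm} since $\|f\|_{F^1_\lambda(\G)}\le \|f\|_{I}=\|f\|_{1,0}$, so I would reduce to $p\in(1,\infty)$ with Hölder conjugate $q$.

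Next, for a fixed unit $u\in \G^{(0)}$ and a positive integer $k$ to be chosen later, I would write $|f(x)| = |f(x)|(1+l(x))^{k}\cdot (1+l(x))^{-k}$ and apply Hölder's inequality on $\G_u$ to obtain
\begin{equation*}
\sum_{x\in \G_u}|f(x)| \le \Big(\sum_{x\in \G_u}|f(x)|^p(1+l(x))^{pk}\Big)^{1/p}\Big(\sum_{x\in \G_u}(1+l(x))^{-qk}\Big)^{1/q}.
\end{equation*}
The first factor is at most $\|f\|_{p,k}$ by definition. The second factor is where polynomial growth enters: I would decompose $\G_u$ into the annular sets $A_n=\{x\in \G_u: n\le l(x)<n+1\}$, note $|A_n|\le |B_{\G_u}(n+1)|\le c(2+n)^r$ by Definition \ref{def: polynomial growth}, and bound
\begin{equation*}
\sum_{x\in \G_u}(1+l(x))^{-qk} \le \sum_{n=0}^{\infty}c(2+n)^r(1+n)^{-qk}.
\end{equation*}
Choosing $k$ large enough that $qk-r\ge 2$ (e.g.\ $k=r+2$ suffices for any $q>1$) makes the series converge to a constant $\tilde c$ independent of $u$.

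Running the same estimate on the range fibers $\G^u$, using $l(x^{-1})=l(x)$ together with the observation that $|f^*|$ has the same $\|\cdot\|_{p,k}$-norm as $f$ (or equivalently, that summing $|f|$ over $\G^u$ equals summing $|f^*|$ over $\G_u$), yields an identical bound. Taking the supremum over $u\in \G^{(0)}$ gives $\|f\|_I \le \tilde c^{1/q}\|f\|_{p,k}$, and combining with Lemma \ref{lem: basic estimaste involving sup-norm, operator-norm and I-norm} furnishes
\begin{equation*}
\|f\|_{F^p_\lambda(\G)}\le \|f\|_I \le \tilde c^{1/q}\|f\|_{p,k},
\end{equation*}
which is exactly property $RD_p$ as in Definition \ref{def: RD definition}. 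The one slightly subtle point is ensuring the constant controlling the geometric series is uniform in $u$; this is precisely why the polynomial growth hypothesis is formulated as a supremum over $\G^{(0)}$, so no real obstacle arises. The whole proof is then a clean adaptation of the classical group-theoretic argument.
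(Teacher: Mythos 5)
Your proposal is correct and follows essentially the same route as the paper: reduce to $p\in(1,\infty)$ since $RD_1$ is automatic, split $|f(x)|=|f(x)|(1+l(x))^k(1+l(x))^{-k}$ and apply Hölder, control $\sum_{x\in\G_u}(1+l(x))^{-qk}$ by a convergent series using polynomial growth with $k=r+2$, handle $\G^u$ via $f^*$, and take the supremum over units before invoking Lemma \ref{lem: basic estimaste involving sup-norm, operator-norm and I-norm}. No gaps.
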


Some straightforward examples of étale groupoids which have polynomial growth are collected in the next example. 
\begin{example} \label{ex: easy examples of polynomial growth groupoids}
	\
	
	\begin{itemize}
		\item[(1)] Any compact étale groupoid has polynomial growth.
		\item[(2)] A locally compact Hausdorff space, seen as a trivial groupoid, has polynomial growth.
		\item[(3)] Any discrete group of polynomial growth is of polynomial growth when considered as a groupoid.
		\item[(4)] Suppose $\Gamma$ is a discrete group equipped with a length function $l \colon \Gamma \to \R_+$, and suppose $X$ is a locally compact Hausdorff space on which $\Gamma$ acts via homeomorphisms. The length function may be extended to the transformation groupoid via the map $(x,\gamma) \in X \rtimes \Gamma \mapsto l(\gamma)$. With respect to this length function, the transformation groupoid $X \rtimes \Gamma$ has polynomial growth if and only if the group $\Gamma$ has polynomial growth with respect to $l$.
		
	\end{itemize}
\end{example}

\begin{example}{(Topological full groups of polynomial growth)}
	
	Let $\G$ be an effective étale groupoid with compact unit space. A bisection $U\subseteq \G$ is called \emph{full} if $r(U)=s(U)=\G^{(0)}$. The \emph{topological full group of $\G$} is the group of all full bisections, and it is denoted by $[[\G]]$. 
	
	Suppose that $[[\G]]$ has polynomial growth. Then for every finite set $X=\{U_1,\ldots, U_m\}\subseteq [[\G]]$ that contains the identity and generates $[[\G]]$, there exist $R,C >0$ such that $|X^n|\leq C (1+n)^R$. Assume that $|r(\G_u)|\geq 2 $, for every $u \in \G^{(0)}$. Then by \cite[Lemma 3.9]{TopFullGroupsAmpleGroupoidsPetterOgEduard}, we have that $S=\bigcup_{i=1}^m U_i$ is a compact generating set of $\G$. Given $u \in \G^{(0)}$, we have that $|B_{\G_u}(n)|= |S^n u|\leq |X^n|\leq C (1+n)^R$. Thus, $\G$ has polynomial growth.     
\end{example}

A more involved example is that of AF-groupoids. Recall that an AF-groupoid $\G$ is an ample second-countable groupoid, such that $\G^{(0)}$ is a locally compact Cantor space, $\G = \bigcup_{n = 1}^{\infty} \mathcal{K}_n$, where each $\mathcal{K}_n$ is a principal clopen subgroupoid for which $\mathcal{K}_{n}^{(0)} = \G^{(0)}$, $\mathcal{K}_n \setminus \G^{(0)}$ is compact and $\mathcal{K}_n \subset \mathcal{K}_{n+1} $. When the AF-groupoid $\G$ has compact unit space, each $\mathcal{K}_n$ is compact. By 
\cite[Theorem 3.9]{SkauEtAlAffableEquivalenceRelations}, any AF-groupoid can, up to isomorphism, be constructed from a Bratteli diagram as described in \cite[Section 11.5]{TopFullGroupsAmpleGroupoidsPetterOgEduard}. For the convenience of the reader, we shall recall this construction in what comes next (see also \cite[Theorem 3.6]{SkauEtAlAffableEquivalenceRelations} and \cite[Example 2.2]{MatuiAbelSympAFGroupoidBD}): A Bratteli diagram $B = (V, E)$ consists of a disjoint union of finite sets of vertices $V = \bigsqcup_{n = 0}^{\infty} V_n $, a disjoint union of finite sets of edges $E = \bigsqcup_{n = 1}^{\infty} E_n$, and maps $i \colon E_n \to V_{n -1}$ and $t \colon E_n \to V_n $, for $n \geq 1$. Recall also that a source is a vertex $v \in V$ for which there is no edge with $v$ as target; that is, there is no $e \in E$ for which $t(e) = v$. Let $S(B)$ denote the set of all sources. The Bratteli diagram $B$ is called standard if $V_0 = \{v_0 \} = S(B)$. Suppose we are given a Bratteli diagram $B = (V, E)$, with set of sources $S(B)$. We can then construct its associated infinite path space. First of all, for any source on the nth level $v \in S(B) \cap V_n $, the set of infinite paths starting at $v$ is the set 
\begin{equation*}
	X_v := \{ e_{n+1} e_{n+2} \dots \colon e_i \in E_{i} , i(e_{n+1}) = v \text{ and } i(e_{n + k +1}) = t(e_{n+k}) \forall k \geq 1 \} .
\end{equation*}
For $x \in X_v$, we shall write $x = x_{n+1} x_{n+2} \dots$, where $x_{i} \in E_i$. The infinite path space associated to $B$ is then 
\begin{equation*}
	X_B := \bigsqcup_{v \in S(B)} X_v .
\end{equation*}
Its topology has as basis the cylinder sets given for a finite path $\mu$ with $i(\mu) \in S(B) \cap V_n$ as 
\begin{equation*}
	Z(\mu) := \{ e_{n+1} e_{n+2} \dots \in X_{i(\mu)} \colon e_{n+1} \dots e_{n + |\mu|} = \mu \} ,
\end{equation*}
where $| \mu|$ denotes the number of edges comprising the path $\mu$. These cylinder sets are compact open. Define, for each $N \geq 1$, the set 
\begin{equation*}
	P_N := \{ (x,y) \in X_{B}^{2} \colon i(x) \in V_m \cap S(B) , i(y) \in V_n \cap S(B) , m,n \leq N , x_k = y_k , \forall k > N \} .
\end{equation*}
Equipped with the relative topology, $P_N$ is a compact principal ample Hausdorff groupoid whose unit space is identified with 
\begin{equation*}
	\bigsqcup_{n = 0}^{N} \bigsqcup_{v \in S(B) \cap V_n} Z(v).
\end{equation*}
We define the groupoid associated with the Bratteli diagram $B$ as the increasing union 
\begin{equation*}
	\G_B := \bigcup_{N = 1}^{\infty} P_N ,
\end{equation*}
equipped with the inductive limit topology. A compact open basis for this topology is given by the cylinder sets $Z(\mu, \lambda)$ corresponding to finite paths $\mu$ and $\lambda$ such that $i(\mu) \in S(B) \cap V_n$, $i(\lambda) \in S(B) \cap V_m$, for some $m,n \in \N_0$, and $t(\mu) = t(\lambda)$. They are defined as 
\begin{equation*}
	Z(\mu, \lambda) := \{ (x,y) \in Z(\mu) \times Z(\lambda) \colon x_{[n + |\mu|+1,\infty)} = y_{[m + |\lambda|+1, \infty)} \},
\end{equation*}
where, for example, $x_{[n+|\mu|+1,\infty)} = x_{n + |\mu| + 1} x_{n + |\mu| + 2} \dots $, for the $x_k \in E_k$ comprising the infinite path $x$. The unit space of $\G_B$ is identified with $X_B$. Setting $\mathcal{K}_n = P_n \cup \G_{B}^{(0)}$, we see that $\G_B$ becomes an AF-groupoid as defined in the beginning of this paragraph.

\begin{proposition} \label{prop: AF groupoids with compact unit space have polynomial growth}
	Any AF-groupoid can be equipped with a continuous length function for which it has polynomial growth.
	\begin{proof}
		Let $\G$ be an AF-groupoid. By \cite[Theorem 3.9]{SkauEtAlAffableEquivalenceRelations}, there is a Bratteli diagram $B = (V,E)$ such that $\G \cong \G_B $. Thus, by Lemma \ref{lem: permanence properties of polynomial growth}, it suffices to show that $\G_B$ has polynomial growth with respect to a continuous length function. We may write $\G_B = \bigsqcup_{n = 1}^{\infty} \mathcal{K}_n$, for clopen principal subgroupoids $\mathcal{K}_n = P_n \cup \G_{B}^{(0)}$ such that $\mathcal{K}_{n} \subset \mathcal{K}_{n+1}$, $\mathcal{K}_{n}^{(0)} = \G_{B}^{(0)}$ and $\mathcal{K}_{n} \setminus \G_{B}^{(0)}$ is compact, defined as in the preceding paragraph. For $A , A^{\prime} \subset V$ two finite subsets, we let $| A E A^{\prime} |$ denote the total number of paths from the vertices in $A$ to the ones in $A^{\prime}$. Recall that $\G_B$ consists of pairs $(x,y) $ such that $i(x) \in V_n \cap S(B)$, $i(y) \in V_m \cap S(B)$, and there is some $N \in \N_0$ such that $m,n \leq N$ and $x_{[N +1,\infty)} = y_{[N+1, \infty)}$. For such $x = x_{n+1} x_{n+2} \dots$ and $y = y_{m+1} y_{m+2} \dots $, it will be convenient to write $x = \bar{e}_1 \dots \bar{e}_n x_{n+1} x_{n+2} \dots$ and $y = \bar{e}_1 \dots \bar{e}_m y_{m+1} y_{m+2} \dots$ where the $\bar{e}_i$ are objects different from any edge in the Bratteli diagram. For any pair $(e,f)$ consisting of edges and/or objects as above, we set $\epsilon_{e,f} := 0$, if $e = f$ and $\epsilon_{e,f} := 1$, if $e \neq f$. Also, for $m \geq 1$, $S_{\leq m}(B)$ will denote the set of all sources in $\bigsqcup_{n = 0}^{m} V_n $. We may suppose without loss of generality that there is a strictly increasing sequence $\{k_i \}_{i \in \N}$ such that $|S_{\leq k_i}(B) E V_{k_i}| < |S_{\leq k_{i+1}}(B) E V_{k_{i+1}}| $, for each $i \in \N $. Otherwise, $\G_{B}^{(0)}$ is finite, and so the length function defined as $x \mapsto 0 $, for all $x \in \G_B^{(0)}$ and $g \mapsto 1 $, for all $g \in \G_B \setminus \G_{B}^{(0)} $, will do. Assume, therefore, that there exists such a subsequence. The map 
		\begin{equation*}
			l (x,y) = \sum_{k = 1}^{\infty} |S_{\leq k}(B) E V_{k}| \epsilon_{x_{k} , y_{k}} ,
		\end{equation*}
		where, for example, $x_k = \bar{e}_{k}$ or $x_k \in E_k$, is a length function on $\G_B$. This is in fact a continuous length function. Indeed, let $g_n \to g$ in $\G_B$. There exist two finite paths $\mu, \lambda$ with $t(\lambda) = t(\mu)$ such that $g \in Z(\mu, \lambda)$.  It follows that there exists $N \in \N$ such that when $n \geq N$, $g_n \in Z(\mu, \lambda)$. Thus, $l(g_n) = l(g)$, for all $n \geq N$. To see why it has polynomial growth, let $R \geq 1$ be given. Let $m \in \N_0$ be the largest integer for which $|S_{\leq m} (B) E V_m | \leq R$. Given $y \in \G_{B}^{(0)}$, if $x \in \G_{B}^{(0)}$ is such that $l(x,y) \leq R$, then $x_k = y_k $, for all $k > m $, and for $k \leq m$, we might have that $x_k \neq y_k $. Thus, 
		\begin{equation*}
			| B_{(\G_{B})_{y}} (R) | \leq | S_{\leq m} (B) E t(y_m) | \leq | S_{\leq m} (B) E V_m | \leq R ,
		\end{equation*}
		showing that $\G_B$ has polynomial growth with respect to $l$.
	\end{proof}
\end{proposition}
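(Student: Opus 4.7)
My plan is first to reduce to the case of a groupoid $\G_B$ associated with a Bratteli diagram $B = (V,E)$, using the representation theorem from Skau et al.\ cited in the excerpt together with Lemma \ref{lem: permanence properties of polynomial growth}(4), which transports polynomial growth across a groupoid isomorphism. So I may assume $\G = \G_B$ outright, with the unit space, cylinder basis $\{Z(\mu,\lambda)\}$, and filtration $\mathcal{K}_n = P_n \cup \G_B^{(0)}$ already available as described in the preliminaries to the statement.

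Given $(x,y) \in \G_B$ with $x$ starting at a source in $V_n \cap S(B)$ and $y$ at a source in $V_m \cap S(B)$, I would pad the initial segments by placeholder symbols $\bar e_1, \ldots, \bar e_n$ and $\bar e_1, \ldots, \bar e_m$ so that both infinite sequences are indexed uniformly from $k = 1$, declaring placeholder equal to placeholder but distinct from any genuine edge. Writing $\epsilon_{a,b} = 0$ if $a=b$ and $\epsilon_{a,b}=1$ otherwise, I would then define
\[
    l(x,y) := \sum_{k=1}^{\infty} w_k\, \epsilon_{x_k, y_k}
\]
for an integer weight sequence $(w_k)$ chosen below. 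Eventual equality of the paths makes the sum finite, and the length-function axioms are immediate: $l$ vanishes on units, is symmetric in $(x,y)$, and satisfies the triangle inequality via $\epsilon_{x_k,z_k} \leq \epsilon_{x_k,y_k} + \epsilon_{y_k,z_k}$.

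The decisive choice is $w_k := |S_{\leq k}(B)\, E\, V_k|$, the number of paths from sources at or below level $k$ into $V_k$. If the sequence $(w_k)$ is eventually constant then $\G_B^{(0)}$ is finite and the trivial length function that equals $1$ off the units already yields polynomial growth, so I may suppose $w_k \to \infty$. For $R \geq 1$, let $m$ be maximal with $w_m \leq R$. Any $(x,y)$ with $l(x,y) \leq R$ must satisfy $x_k = y_k$ for all $k > m$, so such a pair is determined by the $m$-initial segment of $x$; the number of such initial segments terminating at $t(y_m)$ is at most $|S_{\leq m}(B)\, E\, V_m| = w_m \leq R$. This gives $|B_{(\G_B)_y}(R)| \leq R$ uniformly in $y \in \G_B^{(0)}$, which is polynomial (in fact linear) in $R$.

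Continuity is essentially free: on a basic cylinder $Z(\mu,\lambda)$, every index $k$ at which $\epsilon_{x_k,y_k}$ could still vary is pinned down by $(\mu,\lambda)$, because the tails past the specified range must coincide by the very definition of membership in $\G_B$. Hence $l$ is locally constant on a basis of the topology, so it is continuous. The main subtlety I anticipate is the bookkeeping when the two paths originate from sources at different levels, which is precisely what the placeholder symbols $\bar e_i$ are designed to resolve, together with the calibration of the weights $w_k$ so that balls are polynomially bounded while $l$ remains a bona fide length function; the particular choice $w_k = |S_{\leq k}(B)\,E\,V_k|$ threads both requirements simultaneously.
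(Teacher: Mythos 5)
Your proposal is correct and follows essentially the same route as the paper's own proof: the same reduction to $\G_B$, the same placeholder padding, the identical weight choice $w_k = |S_{\leq k}(B)\,E\,V_k|$ with the same degenerate case when the weights are eventually constant, and the same counting and continuity arguments. The only addition is your explicit verification of the length-function axioms via $\epsilon_{x_k,z_k} \leq \epsilon_{x_k,y_k} + \epsilon_{y_k,z_k}$, which the paper leaves implicit.
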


\section{Applications} \label{sec: applications of polynomial growth and RDp}
In this section, we will investigate some consequences of polynomial growth and property $RD_p$. There are two main results. The first being that for an étale groupoid $\G$ endowed with a continuous length function for which it has property $RD_p$ and $RD_q$, where $p,q \in (1, \infty)$ are Hölder conjugate, the Fréchet space $S_{p}^{l}(\G)$ becomes a Fréchet algebra under convolution, and we have isomorphisms in $K$-theory 
\begin{equation*}
	K_{\ast}(F_{\lambda}^{p}(\G)) \cong K_{\ast}(S_{p}^{l}(\G)) \cong K_{\ast} (F_{\lambda}^{q}(\G)) ,
\end{equation*}
for $\ast = 0,1$. The second is that when the étale groupoid has polynomial growth, the K-groups of $\Fp(\G)$ are independent of the exponent $p \in (1, \infty)$. Similar results were obtained in \cite{kTheoryOfBAandRD} for locally compact groups. Therein, the analogous result to the second was obtained for a fairly large class of groups which includes groups of polynomial growth. To obtain the analogous result to our first, it was only required that the locally compact groups have property $RD_p$, in which case they found a dense spectral invariant Banach algebra of the reduced group $L^p$-operator algebra. 
The added complexity of multiple units and general lack of symmetry adds difficulties not present in the group case. We are therefore forced to assume both property $RD_p$ and $RD_q$ (or equivalently, by Proposition \ref{prop : RD inherited below (étale case)}, the one corresponding to the larger exponent of the two) and to look for dense spectral invariant Fréchet algebras instead of Banach algebras.

Recall that for an étale groupoid $\G$ endowed with a locally bounded length function $l$, the space of rapidly decreasing functions is 
\begin{equation*}
	S_{p}^{l}(\G) = \bigcap_{k = 0}^{\infty} L_{p,k}(\G) ,
\end{equation*}
and that under the locally convex topology induced by the norms $\{ \lVert \cdot \rVert_{p,k} \}_{k \in \N_0}$, this is a Fréchet space. Elements in $S_{p}^{l}(\G)$ are continuous functions $f$ on $\G$ such that $\lVert f \rVert_{p,k} < \infty$, for all $k \in \N_0$. Recall also that when the étale groupoid $\G$ has property $RD_p$, we can continuously include $S_{p}^{l} (\G)$ into $F_{\lambda}^{p}(\G)$ via $j_{p}^{-1}$. In fact, using Lemma \ref{lem: involution extends to isometric anti-homomorphism} we can also with an analogous argument as in \cite[Lemma 3.3]{hou2017spectral} continuously include $S_{p}^{l} (\G)$ into $F_{\lambda}^{q}(\G)$ via $j_{q}^{-1}$. Moreover, for $f \in S_{p}^{l}(\G)$, $u \in \G^{(0)}$ and $\xi \in C_c(\G_u)$, we have $\lambda_u (f) (\xi) = f \ast \xi $. 

\begin{proposition} \label{prop: Space of rapidly decreasing functions form a Fréchet algebra}
	Let $\G$ be an étale groupoid which has property $RD_p$, for some $p \in (1, \infty)$, with respect to a continuous length function $l \colon \G \to \R_{+}$. Then there exist $c > 0$ and $k \in \N_0$ such that for all $a \in S_{p}^{l}(\G)$, we have 
	\begin{equation*}
		\lVert a \rVert_{F_{\lambda}^{i}(\G)} \leq c \lVert a \rVert_{p,k} ,
	\end{equation*}
	for $i = p,q$. Also, $S_{p}^{l}(\G)$ is a Fréchet $\ast$-algebra with respect to convolution and involution given respectively by 
	\begin{equation*}
		f \ast g (x) = \sum_{y \in \G_{s(x)}} f(x y^{-1}) g(y) ,
	\end{equation*}
	and $f^{\ast}(x) = \overline{f(x^{-1})}$. Setting $\A_p := j_{p}^{-1}(S_{p}^{l}(\G)) $, $\A_q := j_{q}^{-1}(S_{p}^{l}(\G))$ and endowing these with the locally convex topology generated by the norms $\{ \lVert \cdot \rVert_k \}_{k \in \N_0}$ given by $\lVert a \rVert_{k} := \lVert j_p (a) \rVert_{p,k}$ if $a \in \A_p$ and $\lVert a \rVert_k := \lVert j_q (a) \rVert_{p,k}$ if $a \in A_q $, both $\A_p \subset \Fp (\G)$ and $\A_q \subset F_{\lambda}^{q}(\G)$ are dense Fréchet subalgebras which are isomorphic to $S_{p}^{l}(\G)$. Moreover, for $a \in \Fp(\G)$, $a \in \A_p$ if and only if $j_p (a) \in S_{p}^{l}(\G)$, and similarly, for $a \in F_{\lambda}^{q}(\G)$, $a \in \A_q$ if and only if $j_q (a) \in S_{p}^{l}(\G)$.
	
	If $p = 1$, then with respect to the same convolution and involution, $S_{1}^{l}(\G)$ is also a Fréchet $\ast$-algebra, which may be identified with a Fréchet subalgebra of $F_{\lambda}^{1}(\G)$.
	\begin{proof}
		Assume $p \in (1, \infty)$. As already mentioned in the previous paragraph, we know that when $\G$ has property $RD_p$, $S_{p}^{l}(\G)$ is continuously included in $F_{\lambda}^{i}(\G)$, for $i = q,p$. So the first statement regarding the inequality of the norms then follows, say with constants $k \in \N_0$ and $c > 0$. We will show that $S_{p}^{l}(\G)$ is a Fréchet $\ast$-algebra under the above stated convolution and involution. Note first that $\lVert f^{\ast} \rVert_{p,n} = \lVert f \rVert_{p,n}$, for all $n \in \N_0$ and all $f \in S_{p}^{l}(\G)$. Thus, the involution is well defined and continuous. To see that the convolution product is continuous, assume first 
		that $f, g \in S_{p}^{l}(\G)$ are positive functions. Fix $n \in \N_0 $. It is easy to see that 
		\begin{equation*}
			(f \ast g) (x) (1 + l(x))^n \leq (f (1 + l)^n) \ast (g (1+l)^n) (x) ,
		\end{equation*}
		for any $x \in \G$. So, for any $u \in \G^{(0)}$, 
		\begin{align*}
			\big\lVert (f \ast g) (1 + l)^n \big\rVert_{\ell^p (\G_u)} &\leq \big\lVert (f (1 + l)^n) \ast (g (1+l)^n) \big\rVert_{\ell^p (\G_u)} \\
			&\leq \big\lVert f (1+l)^n \big\rVert_{\Fp(\G)} \big\lVert g (1+l)^n \big\rVert_{\ell^p (\G_u)} \leq c \lVert f \rVert_{p, n+k} \lVert g \rVert_{p,n}.
		\end{align*}
		Also, we have
		\begin{align*}
			\big\lVert (f \ast g) (1 + l)^n \big\rVert_{\ell^p (\G^{u})} &\leq \big\lVert (f (1 + l)^n) \ast (g (1+l)^n) \big\rVert_{\ell^p (\G^{u})} \\
			&= \big\lVert (g^{\ast} (1+l)^n) \ast (f^{\ast} (1 + l)^n) \big\rVert_{\ell^p (\G_u)} \\
			&\leq \big\lVert (g(1 + l)^n)^{\ast} \big\rVert_{\Fp(\G)} \lVert f \rVert_{p,n} \\
			&= \big\lVert g (1+l)^n \big\rVert_{\Fq(\G)} \lVert f \rVert_{p,n} \\
			&\leq c \big\lVert g (1+l)^n \big\rVert_{p,k} \lVert f \rVert_{p,n} = c \lVert g \rVert_{p, n +k} \lVert f \rVert_{p,k}.
		\end{align*}
		Taking suprema over all $u \in \G^{(0)}$ and noting that $\lVert \cdot \rVert_{p,t} \leq \lVert \cdot \rVert_{p, t^{\prime}}$, when $t \leq t^{\prime}$, we obtain
		\begin{equation*}
			\lVert f \ast g \rVert_{p,n} \leq c \lVert f \rVert_{p,n+k} \lVert g \rVert_{p,n+k} .
		\end{equation*}
		By the triangle inequality, a similar estimate holds for general $f,g \in  S_{p}^{l}(\G)$. This shows that the product is jointly continuous, and hence $S_{p}^{l}(\G)$ is a Fréchet $\ast$-algebra. By Proposition \ref{prop: identification of Fp functions with C0 functions}, $j_p (ab) = j_p (a) \ast j_p (b)$ for all $a,b \in \Fp(\G)$, from which it follows that $j_{p}^{-1} (f \ast g) = j_{p}^{-1}(f) j_{p}^{-1}(g) $, for all $f,g \in S_{p}^{l}(\G)$. Thus, $j_{p}^{-1}$ is an injective algebra homomorphism and so we may endow its image $\A_p$ with a Fréchet algebra structure for which the statements in the proposition follows. The analogous reasoning shows the same for $\A_q$.
	\end{proof}
\end{proposition}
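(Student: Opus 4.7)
The plan is to derive everything from the $RD_p$ estimate combined with the anti-isomorphism $\Fp(\G) \to \Fq(\G)$. The $\Fp$-bound $\|a\|_{\Fp(\G)} \leq c\|a\|_{p,k}$ for some $c, k$ is essentially the definition of $RD_p$, which extends from $C_c(\G)$ to all of $S_p^l(\G)$ via the approximation argument underlying Proposition \ref{prop: Frechet space included in Fp if and only if RD}. For the $\Fq$-bound, I would use Lemma \ref{lem: involution extends to isometric anti-homomorphism}: since $l(x^{-1}) = l(x)$, the involution $f \mapsto f^*$ swaps the $\G_u$ and $\G^u$ summations in the $I^p$-norm, so $\|f^*\|_{p,n} = \|f\|_{p,n}$ for every $n$. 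Combined with $\|f\|_{\Fq(\G)} = \|f^*\|_{\Fp(\G)}$, applying $RD_p$ to $f^*$ yields $\|f\|_{\Fq(\G)} \leq c\|f\|_{p,k}$ and simultaneously shows continuity of the involution on $S_p^l(\G)$.

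The core task is joint continuity of the convolution. The key pointwise ingredient is that $(xy^{-1}, y) \in \G^{(2)}$ implies $l(x) \leq l(xy^{-1}) + l(y)$, hence
\begin{equation*}
1 + l(x) \leq \bigl(1 + l(xy^{-1})\bigr)\bigl(1 + l(y)\bigr),
\end{equation*}
by the elementary inequality $1 + a + b \leq (1+a)(1+b)$ for $a, b \geq 0$. For positive $f, g \in S_p^l(\G)$ this gives the pointwise bound $(f \ast g)(x)(1+l(x))^n \leq \bigl(f(1+l)^n \ast g(1+l)^n\bigr)(x)$. Taking the $\ell^p(\G_u)$-norm and interpreting the right side as $\lambda_u(f(1+l)^n)$ acting on $g(1+l)^n$, the $\Fp$-bound from the first step yields $\|(f \ast g)(1+l)^n\|_{\ell^p(\G_u)} \leq c\|f\|_{p,n+k}\|g\|_{p,n}$. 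For the $\ell^p(\G^u)$-norm, I would use $(f \ast g)^* = g^* \ast f^*$ together with $\|h\|_{\ell^p(\G^u)} = \|h^*\|_{\ell^p(\G_u)}$ to reduce to a $\G_u$-type bound involving $\|g(1+l)^n\|_{\Fq(\G)}$, which the first step controls by $c\|g\|_{p,n+k}$. Taking suprema yields $\|f \ast g\|_{p,n} \leq c\|f\|_{p,n+k}\|g\|_{p,n+k}$, and the general case follows by splitting $f, g$ into positive and negative real and imaginary parts.

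The remainder is bookkeeping. Proposition \ref{prop: identification of Fp functions with C0 functions} gives that $j_p, j_q$ are injective linear maps intertwining convolution, so pulling back the Fréchet topology on $S_p^l(\G)$ through $j_p^{-1}$ and $j_q^{-1}$ produces the Fréchet subalgebras $\A_p \subset \Fp(\G)$ and $\A_q \subset \Fq(\G)$; the iff characterizations are tautological from the definitions and injectivity. Density follows from $C_c(\G) \subset S_p^l(\G)$ and density of $C_c(\G)$ in each completion. The $p = 1$ case is lighter: $\|\cdot\|_{F_\lambda^1(\G)} \leq \|\cdot\|_{I^1} \leq \|\cdot\|_{1,k}$ trivially, so $S_1^l(\G) \hookrightarrow F_\lambda^1(\G)$ without any $RD$ hypothesis, and the convolution estimate carries over (in fact more easily, since only the $\ell^1(\G_u)$ side needs the operator-norm bound, which the $I^1$-norm already gives). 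The main obstacle is precisely the $\ell^p(\G^u)$-side of the convolution estimate: in the group case, source/range symmetry trivializes it, but in the étale setting it forces one to invoke the $\Fp$-$\Fq$ anti-isomorphism, which is why the argument genuinely uses both halves of the involution lemma.
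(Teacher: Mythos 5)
Your proposal is correct and follows essentially the same route as the paper's own proof: the pointwise bound $1+l(x)\leq(1+l(xy^{-1}))(1+l(y))$ giving $(f\ast g)(1+l)^n\leq (f(1+l)^n)\ast(g(1+l)^n)$ for positive $f,g$, the $RD_p$ operator-norm bound for the $\ell^p(\G_u)$-side, and the involution/anti-isomorphism with $\Fq(\G)$ for the $\ell^p(\G^u)$-side, followed by the same $j_p$, $j_q$ bookkeeping. The only (harmless) difference is that you spell out the easy $p=1$ case, which the paper's proof leaves implicit.
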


We record the following lemma for future applications. The proof is straightforward, so we omit it.
\begin{lemma}\label{lem: inclusion of Frechet spaces Slp and equality for RDp, p > 2}
	Let $\G$ be an étale groupoid endowed with a locally bounded length function $l$. If $1 \leq q \leq p < \infty$, then $\lVert \cdot \rVert_{p,k} \leq \lVert \cdot \rVert_{q,k}$, for all $k \in \N_0$, and consequently $ S_{q}^{l}(\G) \subset S_{p}^{l}(\G)$ continuously. If $l$ is continuous and $\G$ has property $RD_p$, for $p > 2$, with respect to $l$, then $ S_{q}^{l}(\G) = S_{p}^{l}(\G)$ as Fréchet algebras, where $q$ is the Hölder exponent of $p$. 
\end{lemma}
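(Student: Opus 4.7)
The first part is immediate from the standard inequality $\lVert h \rVert_{\ell^p(A)} \leq \lVert h \rVert_{\ell^q(A)}$ for counting measure with $q \leq p$. Applied to $h = f(1+l)^k$ on each fiber $\G_u$ and each $\G^u$, taking suprema over $u \in \G^{(0)}$ yields $\lVert f \rVert_{p,k} \leq \lVert f \rVert_{q,k}$, and hence the continuous inclusion $S_{q}^{l}(\G) \hookrightarrow S_{p}^{l}(\G)$.

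For the second part, given Part 1, it suffices to establish the reverse inclusion $S_{p}^{l}(\G) \subset S_{q}^{l}(\G)$ continuously. The plan is to control $\lVert f \rVert_{q,m}$ via the operator norm on $\Fq(\G)$, using two ingredients. The first is Proposition \ref{prop: Space of rapidly decreasing functions form a Fréchet algebra}: because $\G$ has $RD_p$, there are uniform constants $c > 0$ and $k \in \N_0$ such that $\lVert a \rVert_{\Fq(\G)} \leq c \lVert a \rVert_{p,k}$ for every $a \in S_p^l(\G)$. The second is a direct consequence of Lemma \ref{lem: interpretation of Fp elements as convolution operators}: for any $a \in \Fq(\G)$ and any unit $u$, one has $\lambda_u(a)(\delta_u) = j_q(a)|_{\G_u}$, so that $\lVert j_q(a)|_{\G_u} \rVert_{\ell^q(\G_u)} \leq \lVert \lambda_u(a) \rVert \leq \lVert a \rVert_{\Fq(\G)}$, since $\lVert \delta_u \rVert_{\ell^q(\G_u)} = 1$.

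Now fix $f \in S_{p}^{l}(\G)$ and $m \in \N_0$, and set $g := f(1+l)^m$, which lies in $S_p^l(\G)$ because $l$ is continuous and $\lVert g \rVert_{p, r} = \lVert f \rVert_{p, m+r}$ for every $r$. Chaining the two ingredients yields $\sup_{u} \lVert g|_{\G_u} \rVert_{\ell^q(\G_u)} \leq c \lVert g \rVert_{p,k} = c \lVert f \rVert_{p,m+k}$. To also handle the $\G^u$-suprema appearing in the $I^q$-norm, I would apply the same chain to $g^* \in S_p^l(\G)$: one has $\lVert g^* \rVert_{p,k} = \lVert g \rVert_{p,k}$ (since $l(x^{-1}) = l(x)$ forces the $I^p$-norm to be $\ast$-invariant), and the substitution $x \mapsto x^{-1}$ gives $\lVert g^*|_{\G_u} \rVert_{\ell^q(\G_u)} = \lVert g|_{\G^u} \rVert_{\ell^q(\G^u)}$. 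Combining, $\lVert f \rVert_{q,m} \leq c \lVert f \rVert_{p,m+k}$, which is exactly the continuous inclusion $S_{p}^{l}(\G) \hookrightarrow S_{q}^{l}(\G)$.

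Together with Part 1, the two Fréchet spaces coincide with equivalent topologies, and since convolution and involution are defined by the same pointwise formulas on both sides, this is also an identification as Fréchet $\ast$-algebras. The main conceptual point I would emphasize is that $RD_p$ alone suffices: Proposition \ref{prop: Space of rapidly decreasing functions form a Fréchet algebra} already packages bounds on both $\Fp(\G)$ and $\Fq(\G)$, which is what lets one pass from an $\ell^p$-condition on $f$ to an $\ell^q$-condition without invoking any growth estimate on $\G$. The hypothesis $p > 2$ enters only to ensure $q \leq p$ so that Part 1 supplies the easy inclusion.
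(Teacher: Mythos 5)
Your argument is correct. The paper itself omits the proof (declaring it straightforward), so there is no official argument to compare against line by line; but your route for the nontrivial inclusion $S_{p}^{l}(\G) \subset S_{q}^{l}(\G)$ is sound and uses exactly the device the paper relies on elsewhere (in the proof of Theorem \ref{thm: RD_q and RD_p implies isomorphisms in K-theory}, the fiberwise $\ell^p$-norms of an element are recovered by testing $\delta_{u,p}^{k}(\lambda_{u,p}(a))$ against the basis vector $\epsilon_u$ — your evaluation $\lambda_u(j_q^{-1}(g))(\delta_u) = j_q(g)|_{\G_u}$ is the $k=0$ instance of the same trick). The chain $\sup_u \lVert g|_{\G_u}\rVert_{\ell^q(\G_u)} \leq \lVert g\rVert_{\Fq(\G)} \leq c\lVert g\rVert_{p,k}$ for $g = f(1+l)^m$, together with the $\ast$-symmetric version for the $\G^u$-sums, does give $\lVert f\rVert_{q,m} \leq c\lVert f\rVert_{p,m+k}$, and your observation that only the easy inclusion from Part 1 uses $p>2$ (the hard inclusion needs only $RD_p$) is accurate. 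One small point you should spell out: membership of $g = f(1+l)^m$ in $S_{p}^{l}(\G)$ requires $g \in C_0(\G)$, not just continuity and finiteness of the norms $\lVert g\rVert_{p,r}$. This does hold — since $\lVert f(1+l)^{2m}\rVert_\infty \leq \lVert f\rVert_{p,2m} =: M < \infty$, one has $\vert g\vert \leq (M\vert f\vert)^{1/2} \in C_0(\G)$ — but it is not forced by boundedness alone, so a line to this effect is needed before $j_q^{-1}(g)$ can be formed.
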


In proving our main results, we shall make use of the following series of lemmas. The first one is the Banach algebra version of 
\cite[Theorem 1.2]{JiSmoothDenseSubalgs}. While this is stated in a $C^*$-algebraic setting, the proof carries over directly to Banach algebras. 
\begin{lemma}\label{lem: dense spectral invariant Frechet algebra, Ji} { \cite[Theorem 1.2]{JiSmoothDenseSubalgs}} 
	Let $B$ be a Banach algebra, let $A$ be a Banach subalgebra, and suppose that $\delta \colon \mathrm{Dom}(\delta) \to B$ is a closed unbounded derivation. Let 
	\begin{equation*}
		A_0 := \Big( \bigcap_{k = 0}^{\infty} \mathrm{Dom} (\delta^k) \Big) \cap A .
	\end{equation*}
	Then $A_0$ is a Fréchet algebra under the locally convex topology generated by the semi-norms $\{ \lVert \cdot \rVert_{k} \}_{k \in \N_0}$, where $\lVert a \rVert_{k} := \lVert \delta^k (a) \rVert $, for $a \in A_0$ and $k \in \N_0$. If $A_0$ is dense in $A$, then it is a spectral invariant subalgebra of $A$.
\end{lemma}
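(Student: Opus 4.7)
The plan is to prove both halves of the statement by exploiting the Leibniz rule and closedness of $\delta$, adapting the $C^{\ast}$-argument from \cite{JiSmoothDenseSubalgs} to the Banach setting.

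For the Fr\'echet algebra structure, I would first observe that by iterating the definition, each $\delta^k$ is a closed operator from $\mathrm{Dom}(\delta^k)$ to $B$, so completeness of $A_0$ follows from a standard induction: a sequence Cauchy in every $\|\cdot\|_k$ converges in $A$ to some $a$ and has $\delta^k(a_n)$ convergent in $B$ for each $k$, and closedness forces $a\in\mathrm{Dom}(\delta^k)$ with $\delta^k(a)=\lim_n \delta^k(a_n)$. That $A_0$ is closed under multiplication and that multiplication is jointly continuous both follow from the Leibniz rule $\delta^k(ab)=\sum_{j=0}^k\binom{k}{j}\delta^j(a)\delta^{k-j}(b)$, which gives $\|ab\|_k\leq \sum_j \binom{k}{j}\|a\|_j\|b\|_{k-j}$. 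Replacing the given semi-norms by the equivalent family $\rho_k(a):=\sum_{j=0}^k \tfrac{1}{j!}\|\delta^j(a)\|$, the Leibniz bound becomes the submultiplicative estimate $\rho_k(ab)\leq \rho_k(a)\rho_k(b)$, so $(A_0,\{\rho_k\})$ is a locally multiplicatively convex Fr\'echet algebra generating the same topology.

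For spectral invariance, I would extend $\delta$ trivially to $\widetilde{B}$ by $\delta(\lambda 1+b):=\delta(b)$, so that $\widetilde{A_0}=\C\oplus A_0$ is again $\bigcap_k \mathrm{Dom}(\delta^k)\cap \widetilde{A}$. Given $a\in\widetilde{A_0}$ invertible in $\widetilde{A}$, I would first reduce to a perturbation of $1$: using density of $A_0$ in $A$, pick $b\in\widetilde{A_0}$ with $\|a\|\,\|a^{-1}-b\|<1$, so that $c:=ab\in\widetilde{A_0}$ satisfies $\|1-c\|<1$ in $\widetilde{A}$ and $a^{-1}=bc^{-1}$. Since $\widetilde{A_0}$ is a subalgebra, it then suffices to show that any $c\in\widetilde{A_0}$ with $h:=1-c$ of $A$-norm less than $1$ has $c^{-1}\in\widetilde{A_0}$.

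For this reduced problem I would use the Neumann series $c^{-1}=\sum_{n=0}^{\infty} h^n$, which converges in $A$, together with the closedness of each $\delta^k$. By the multilinear version of the Leibniz rule, $\delta^k(h^n)$ is a sum of at most $\binom{n+k-1}{k}$ products of $n$ factors, each of which is either $h$ (appearing at least $n-k$ times) or some $\delta^j(h)$ with $j\leq k$; therefore
\begin{equation*}
\|\delta^k(h^n)\| \;\leq\; C_k\, n^k\, \|h\|^{n-k}
\end{equation*}
for a constant $C_k$ depending only on $\rho_k(h)$. Because $\|h\|<1$, the series $\sum_n \|\delta^k(h^n)\|$ converges, so by closedness of $\delta^k$ applied inductively in $k$, the partial sums $\sum_{n=0}^{N}h^n$ lie in $\mathrm{Dom}(\delta^k)$ and their $\delta^k$-images converge in $B$, forcing $c^{-1}\in\mathrm{Dom}(\delta^k)$ for every $k$ and hence $c^{-1}\in\widetilde{A_0}$. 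The main technical obstacle is precisely this combinatorial estimate on $\|\delta^k(h^n)\|$: density of $A_0$ is only asserted in the $A$-norm, so one cannot shrink $\rho_k(h)$ at will and must rely on the polynomial-times-geometric bound above to absorb the growth of $\delta^k$ into the decay of $\|h\|^n$. Once that bound is in hand, spectral invariance falls out cleanly from closedness.
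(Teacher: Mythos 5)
Your proof is correct. The paper gives no argument of its own for this lemma---it simply cites Ji's Theorem 1.2 and asserts that the $C^{\ast}$-algebraic proof carries over verbatim to Banach algebras---and your write-up is exactly that standard argument (iterated Leibniz rule plus closedness of $\delta$ for the Fr\'echet structure; reduction via density to a Neumann series, controlled by the $C_k\, n^k \|h\|^{n-k}$ bound, for spectral invariance), with the only cosmetic caveat that $\delta^k$ itself need not be a closed operator: what you actually use, and correctly describe, is the inductive closedness of the joint map $a \mapsto (a, \delta(a), \dots, \delta^k(a))$, which follows from closedness of $\delta$ alone.
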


If an étale groupoid $\G$ is equipped with a locally bounded length function, we can define, for each $u \in \G^{(0)}$, the set $ \C^p [\G_u]$ consisting of $T \in B(\ell^p (\G_u))$ which satisfy the following three conditions. Firstly, $T$ has finite propagation; that is, 
\begin{equation*}
	\text{Prop}(T) := \sup \{ l(x y^{-1}) \colon T_{xy} \neq 0 \} < \infty ,
\end{equation*}
where, denoting by $\epsilon_y$ the basis vectors corresponding to $y \in \G_u$, $T_{xy} = T(\epsilon_y)(x) $, for $x \in \G_u$. We put $\text{Prop}(0) := 0$. Secondly, $T$ is a translation operator; that is, for each $y \in \G_u$, 
\begin{equation*}
	T (\epsilon_y) = \sum_{z \in F_{y,T}} \alpha_z \epsilon_z = \sum_{z \in F_{y,T}} T_{zy} \epsilon_z ,
\end{equation*}
where $F_{y,T} \subset \G_u$ is a finite subset depending on $y$ and $T$. Thirdly, the absolute value $ |T| $, defined on the basis vectors by 
\begin{equation*}
	|T| (\epsilon_y) = \sum_{z \in F_{y,T}} | \alpha_z | \epsilon_z ,
\end{equation*}
is again in $B(\ell^p (\G_u))$. 

\begin{lemma} \label{lem: translation set is an algebra}
	Suppose $\G$ is an étale groupoid endowed with a locally bounded length function $l$, and let $p \in [1, \infty)$. The set $\C^p [\G_u]$ contains $\lambda_u (C_c (\G))$, and is a subalgebra of $B(\ell^p (\G_u))$.
	\begin{proof}
		We first prove the inclusion $\lambda_u (C_c (\G)) \subset \C^p [\G_u]$. Let $f \in C_c (\G)$ be given. It is easy to see that $\lambda_u (f) (\epsilon_y) = \sum_{z \in \G_{r(y)}} f(z) \epsilon_{zy}$, where the sum is finite because $f$ has compact support. It also follows from this that $\lambda_u (|f |) = | \lambda_u (f) |$, hence $| \lambda_u (f) |$ is again a bounded linear operator on $\ell^p (\G_u)$.  Moreover, since $l$ is locally bounded, 
		\begin{equation*}
			\sup \{ l(x y^{-1}) \colon (\lambda_u (f))_{xy} = f(x y^{-1}) \neq 0 \text{ and } x,y \in \G_u \} \leq \sup\{l(z) \colon z \in \supp (f)\} < \infty .
		\end{equation*}
		Thus, $\lambda_u (C_c (\G)) \subset \C^p [\G_u]$.
		
		Next, we show that $\C^p [\G_u]$ is a linear subspace. Fix any $T,S \in \C^p [\G_u]$ and $\mu \in \C$. We have that $\mu T (\epsilon_y) = \sum_{z \in F_{y,T}} \mu \alpha_z \epsilon_z $, $\text{Prop}(\mu T) \leq \text{Prop}(T)$ (equality if $\mu \neq 0$) and $| \mu T | = | \mu | |T| \in B(\ell^p (\G_u))$, showing that $\mu T \in \C^p [\G_u]$ again. It follows from the inclusions of sets 
		\begin{align*}
			\{ (x,y) \in \G_{u}^{2} \colon (T + S)_{xy} \neq 0 \} &\subset \{ (x,y) \in \G_{u}^{2} \colon T_{xy} \neq 0 \text{ or } S_{xy} \neq 0 \} \\
			&= \{ (x,y) \in \G_{u}^{2} \colon T_{xy} \neq 0 \} \cup \{ (x,y) \in \G_{u}^{2} \colon S_{xy} \neq 0 \},
		\end{align*} 
		that $\text{Prop}(T + S) \leq \text{Prop}(T) + \text{Prop}(S)$. Moreover, we have that 
		\begin{equation*}
			(T + S) (\epsilon_y) = T(\epsilon_y) + S(\epsilon_y) = \sum_{z \in F_{y,T}} \alpha_z \epsilon_z + \sum_{w \in F_{y,S}} \beta_w \epsilon_w = \sum_{x \in F_{y,T} \cup F_{y,S}} (\alpha_x + \beta_x) \epsilon_x ,
		\end{equation*}
		where, for example, $\alpha_x$ is zero if $x \notin F_{y,T} $. In particular, 
		\begin{equation*}
			| T + S | (\epsilon_y) = \sum_{x \in F_{y,T} \cup F_{y,S}} |\alpha_x + \beta_x| \epsilon_x \leq \sum_{z \in F_{y,T}} |\alpha_z | \epsilon_z + \sum_{w \in F_{y,S}} |\beta_w| \epsilon_w = (|T| + |S|)(\epsilon_y).
		\end{equation*}
		Thus, for any positive $\xi \in \ell^p (\G_u)$, 
		\begin{equation*}
			\lVert |T + S | (\xi) \rVert_{\ell^p (\G_u)} \leq \lVert (|T| + |S|) (\xi) \rVert_{\ell^p (\G_u)} ,
		\end{equation*}
		from which it follows that $|T + S|$ is also a bounded linear operator. All of this gives us that $T + S \in \C^p [\G_u]$ again, showing that $\C^p [\G_u]$ is a linear subspace of $B(\ell^p (\G_u))$. 
		
		To see that it is a subalgebra, fix any $T,S \in \C^p [\G_u]$. If $T (\epsilon_y) = \sum_{z \in F_{y,T}} \alpha_z \epsilon_z $ and $S(\epsilon_z) = \sum_{w \in F_{z,S}} \beta_w \epsilon_w$, then 
		\begin{equation*}
			S T (\epsilon_y) = \sum_{z \in F_{y,T}} \sum_{w \in F_{z,S}} \beta_w \alpha_z \epsilon_w = \sum_{z \in F_{y, ST}} \eta_z \epsilon_z.
		\end{equation*}
		It follows that 
		\begin{equation*}
			|S T | (\epsilon_y) = \sum_{z \in F_{y, ST}} |\eta_z | \epsilon_z \leq \sum_{z \in F_{y,T}} \sum_{w \in F_{z,S}} |\beta_w | |\alpha_z | \epsilon_w = |S | |T| (\epsilon_y),
		\end{equation*}
		and so again, given a positive $\xi \in \ell^p (\G_u)$, 
		\begin{equation*}
			\lVert |ST| (\xi) \rVert_{\ell^p (\G_u)} \leq \lVert |S| |T| (\xi) \rVert_{\ell^p (\G_u)},
		\end{equation*}
		so that $|ST| \in B(\ell^p (\G_u))$. Finally, we claim that $\text{Prop}(ST) \leq \text{Prop}(S) + \text{Prop}(T)$. Indeed, notice that 
		\begin{equation*}
			(ST)_{xy} = S (T (\epsilon_y)) (x) = S (\sum_{z \in F_{y,T}} \alpha_z \epsilon_z) (x) = \sum_{z \in F_{y,T}} \alpha_z S(\epsilon_z) (x) .
		\end{equation*}
		Since $S(\epsilon_z) (x) = S_{xz}$ and $\alpha_z = T(\epsilon_y)(z) = T_{zy}$, it follows that 
		\begin{equation*}
			(ST)_{xy} = \sum_{z \in F_{y,T}} S_{xz} T_{zy},
		\end{equation*}
		so if $(ST)_{xy} \neq 0 $, we must have that $T_{zy} \neq 0$ and $S_{xz} \neq 0$, for some $z \in F_{y,T}$. Since $l(x y^{-1}) \leq l(xz^{-1}) + l(z y^{-1})$ and
		\begin{align*}
			\{ (x,y) \in \G_{u}^{2} \colon ST_{xy} \neq 0 \} \subset \{ (x,y) \in \G_{u}^{2} \colon \text{there exists } z \in \G_u \text{ for which } S_{xz} \neq 0 \text{ and } T_{zy} \neq 0 \},
		\end{align*} 
		we have 
		\begin{align*}
			\text{Prop}(ST) =&  \sup \{ l(xy^{-1}) \colon ST_{xy} \neq 0 \} \\
			\leq & \sup \{ l(xy^{-1}) \colon \text{there exists } z \in \G_u \text{ for which } S_{xz} \neq 0 \text{ and } T_{zy} \neq 0 \} \\
			\leq &\sup \{ l(xz^{-1}) + l(z y^{-1}) \colon \text{for } z \in \G_u \text{ such that } S_{xz} \neq 0 \text{ and } T_{zy} \neq 0 \} \\
			\leq &\sup\{ l(xz^{-1}) \colon S_{xz} \neq 0 \} + \sup\{ l(zy^{-1}) \colon T_{zy} \neq 0 \} = \text{Prop}(S) + \text{Prop}(T).				
		\end{align*}
		This shows that $ST \in \C^p [\G_u]$ again, and so $\C^p [\G_u]$ is a subalgebra of $B(\ell^p (\G_u))$.
	\end{proof}
\end{lemma}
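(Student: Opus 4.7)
The plan is to prove the two assertions of the lemma separately, with most of the work being straightforward bookkeeping once the operator $\lambda_u(f)$ has been explicitly computed on basis vectors.

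For the inclusion $\lambda_u(C_c(\G)) \subset \C^p[\G_u]$, I would first compute $\lambda_u(f)(\epsilon_y)(x) = f(xy^{-1})$, expressing $\lambda_u(f)(\epsilon_y)$ as the formal sum $\sum_{x} f(xy^{-1})\epsilon_x$ over $x \in \G_u$. The set of nonzero terms consists of those $x$ with $xy^{-1} \in \supp(f)$, which is finite because $\supp(f)$ is compact in $\G$, the range map is a local homeomorphism, and the intersection $\supp(f) \cdot y \cap \G_u$ is then a discrete compact set. This identifies $\lambda_u(f)$ as a translation operator, and moreover shows $|\lambda_u(f)| = \lambda_u(|f|)$, which is again bounded on $\ell^p(\G_u)$ since $|f| \in C_c(\G)$. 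Finite propagation is immediate from $\mathrm{Prop}(\lambda_u(f)) \leq \sup\{ l(z) : z \in \supp(f)\}$, which is finite by local boundedness of $l$.

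For the algebra assertion, I would check closure under scalars, sums, and products. Scalar multiplication is trivial. For $T + S$, the support set of matrix entries is contained in the union of those of $T$ and $S$, so both the translation property and the propagation bound $\mathrm{Prop}(T+S) \leq \max\{\mathrm{Prop}(T),\mathrm{Prop}(S)\}$ follow, and the pointwise bound $|T+S|(\epsilon_y) \leq (|T|+|S|)(\epsilon_y)$ (triangle inequality in each coordinate) gives $\||T+S|(\xi)\|_p \leq \|(|T|+|S|)(\xi)\|_p$ for nonnegative $\xi$, hence boundedness of $|T+S|$. For the product $ST$, I would apply $S$ term by term to $T(\epsilon_y)$ to obtain a finite double sum, giving the translation property, and the same coordinatewise triangle inequality yields $|ST| \leq |S||T|$ and thus boundedness of $|ST|$.

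The one step that requires a genuinely groupoid-flavored argument is the propagation bound for $ST$. Here I would note that if $(ST)_{xy} = \sum_{z \in F_{y,T}} S_{xz} T_{zy}$ is nonzero, then there must exist some $z \in \G_u$ for which both $S_{xz}$ and $T_{zy}$ are nonzero; in particular $(x,z^{-1})$ and $(z,y^{-1})$ are composable, so $l(xy^{-1}) \leq l(xz^{-1}) + l(zy^{-1}) \leq \mathrm{Prop}(S) + \mathrm{Prop}(T)$. Taking the supremum over such $(x,y)$ gives $\mathrm{Prop}(ST) \leq \mathrm{Prop}(S) + \mathrm{Prop}(T)$, completing the proof. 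The only subtlety I anticipate is being careful with the definition of $|T|$ as an operator (defined first on basis vectors via $|T|(\epsilon_y) = \sum_{z \in F_{y,T}} |\alpha_z|\epsilon_z$) and extending by the boundedness assumption, so that domination of absolute values on basis vectors transfers to operator-norm bounds; this is handled by testing on nonnegative $\xi \in \ell^p(\G_u)$, where both sides make sense as increasing limits.
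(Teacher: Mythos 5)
Your proposal is correct and follows essentially the same route as the paper's proof: the explicit computation of $\lambda_u(f)$ on basis vectors with $|\lambda_u(f)|=\lambda_u(|f|)$, the coordinatewise triangle inequalities $|T+S|\leq |T|+|S|$ and $|ST|\leq |S||T|$ tested on nonnegative vectors, and the propagation bound for products via $l(xy^{-1})\leq l(xz^{-1})+l(zy^{-1})$. The only (harmless) deviation is your sharper bound $\mathrm{Prop}(T+S)\leq\max\{\mathrm{Prop}(T),\mathrm{Prop}(S)\}$ where the paper settles for the sum.
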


The last lemma we shall need is one inspired by \cite[Proposition 4.7]{kTheoryOfBAandRD}. We will denote by $B^p [\G_u]$ the closure of $\C^p [\G_u]$ in $B(\ell^p (\G_u))$. Given a continuous length function $l$, by $M_l$ we shall mean the closed densely defined multiplication operator corresponding to $l$; that is, $M_l (\xi) (x) = l(x) \xi(x)$, for $\xi \in \mathrm{Dom}(M_l) \subset \ell^p (\G_u)$.

\begin{lemma}\label{lem: derivation at u is closed}
	Let $\G$ be an étale groupoid equipped with a continuous length function $l$, fix $p \in [1, \infty)$ and $u \in \G^{(0)}$.  The derivation 
	\begin{equation*}
		\delta_u \colon \mathrm{Dom}(\delta_u) \to B^p [\G_u] , b \mapsto [M_l , b] = M_l b - b M_l ,
	\end{equation*}
	is a closed unbounded derivation. 
	\begin{proof}
		Like in \cite[Proposition 4.7]{kTheoryOfBAandRD}, we shall show that $\delta_u$ may be realized as the infinitesimal generator corresponding to a strongly continuous one parameter group of automorphisms on $B^p [\G_u]$. Notice that for any $t \in \R$, the multiplication operator $e^{i t l}$ is an isometric isomorphism. Since $e^{i t l} (\epsilon_y) = e^{i t l(y)} \epsilon_y $, $\text{Prop}(e^{i t l}) = 0$ and $| e^{i t l} | = I$, we can conclude that $e^{i t l} \in \C^p [\G_u]$. Thus, the one-parameter group of automorphisms 
		\begin{equation*}
			a_t \colon B^p [\G_u] \to B^p [\G_u] , b \mapsto e^{i t l} b e^{-i t l} ,
		\end{equation*}
		is well-defined. If the map $t \mapsto a_t$ is strongly continuous on $B^p [\G_u]$, then the corresponding infinitesimal generator, given by 
		\begin{equation*}
			\delta_a (b) = \lim_{t \to 0} \frac{a_t (b) - b}{t} ,
		\end{equation*}
		is a closed unbounded derivation. Moreover, in this case, adjusting the set-up in \cite[Proposition 2.2]{InfinitesimalGeneratorsOfFlows}, the same proof goes through to the effect that $\delta_a = i \delta_u $, and consequently $\delta_u$ is closed. So we only need to show that $a_t$ is strongly continuous. By density, it suffices to show that given $T \in \C^p [\G_u]$, the map $t \mapsto a_t (T)$ is continuous. Moreover, it suffices to show continuity at zero. To this end, fix any $\xi \in \ell^p (\G_u)$. We may write $\xi = \sum_{y \in \G_u} \alpha_y \epsilon_y$, where $\alpha_y \in \C $. Using that $| e^{i t} -1 | \leq |t| $, for any $t \in \R$, we see that
		\begin{align*}
			\lVert a_t (T) (\xi) - T(\xi) \rVert_{\ell^p (\G_u)}^{p} & = \sum_{x \in \G_u} \Big| e^{it l(x)} \sum_{y \in \G_u} e^{-it l(y)} \alpha_y T(\epsilon_y)(x) - \sum_{y \in \G_u} \alpha_y T(\epsilon_y)(x) \Big|^p \\
			& = \sum_{x \in \G_u} \Big| \sum_{y \in \G_u} (e^{it (l(x) - l(y))} - 1) T_{xy} \alpha_y \Big|^p \\
			& \leq \sum_{x \in \G_u} \Big( \sum_{y \in \G_u}  |t| |l(x) - l(y)| | T_{xy} | | \alpha_y | \Big)^p \\
			& \leq \sum_{x \in \G_u} \Big( \sum_{y \in \G_u} |t| l(xy^{-1}) |T_{xy} | | \alpha_y | \Big)^p \\
			& \leq \text{Prop}(T)^p |t|^p \sum_{x \in \G_u} \Big( \sum_{y \in \G_u} |T_{xy}| |\alpha_y | \Big)^p \\
			& = \text{Prop}(T)^p |t|^p \lVert |T| (|\xi|) \rVert_{\ell^p (\G_u)}^{p} \\
			& \leq \text{Prop}(T)^p |t|^p \lVert | T | \rVert_{B(\ell^p(\G_u))}^{p} \lVert \xi \rVert_{\ell^p (\G_u)}^{p}.
		\end{align*}
		This means that 
		\begin{equation*}
			\lVert a_t (T) - T \rVert_{B(\ell^p (\G_u))} \leq |t | \text{Prop}(T) \lVert |T| \rVert_{B(\ell^p (\G_u) )} ,
		\end{equation*} from which continuity at zero follows. 
	\end{proof}
\end{lemma}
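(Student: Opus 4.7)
The plan is to realize $\delta_u$ as a scalar multiple of the infinitesimal generator of a strongly continuous one-parameter group of isometric automorphisms of $B^p[\G_u]$; since such generators are automatically closed densely defined derivations, this will yield the lemma.

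First, I would introduce the family $U_t := e^{itM_l}$ for $t \in \R$. Because $l$ is real-valued, $U_t$ acts diagonally on the standard basis via $U_t\epsilon_y = e^{itl(y)}\epsilon_y$, so it is an isometric element of $\C^p[\G_u]$ with propagation $0$ and $|U_t| = I$. Conjugation $a_t(b) := U_t b U_{-t}$ therefore defines an isometric one-parameter group on $B(\ell^p(\G_u))$, and since $\C^p[\G_u]$ is a subalgebra that contains each $U_t$, the group restricts to one of isometric automorphisms of $B^p[\G_u]$.

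Second, I would verify strong continuity of $t \mapsto a_t$ on $B^p[\G_u]$. By the isometric group law it suffices to check continuity at $t=0$ on the dense subalgebra $\C^p[\G_u]$. For $T \in \C^p[\G_u]$ and $\xi = \sum_y \alpha_y \epsilon_y \in \ell^p(\G_u)$, the elementary inequality $|e^{is} - 1| \leq |s|$ combined with $|l(x) - l(y)| \leq l(xy^{-1})$ and the finite-propagation and translation-operator properties of $T$ yields an estimate of the form
\begin{equation*}
\lVert a_t(T)\xi - T\xi \rVert_{\ell^p(\G_u)} \leq |t|\,\mathrm{Prop}(T)\,\lVert\, |T|\, \rVert_{B(\ell^p(\G_u))}\,\lVert\xi\rVert_{\ell^p(\G_u)}.
\end{equation*}
This forces $\lVert a_t(T) - T \rVert \to 0$ as $t \to 0$, hence strong continuity on $B^p[\G_u]$.

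Finally, standard semigroup theory then ensures that the infinitesimal generator $\delta_a(b) := \lim_{t\to 0} t^{-1}(a_t(b) - b)$ is a closed unbounded derivation, and a short direct computation identifies $\delta_a$ with $i\delta_u$ on its natural domain. The main obstacle I anticipate is the domain bookkeeping: $\delta_u(b) = [M_l,b]$ is only formally defined because $M_l$ is unbounded, so one must take $\mathrm{Dom}(\delta_u)$ to be the set of $b \in B^p[\G_u]$ for which the commutator extends to a bounded operator on $\ell^p(\G_u)$ and then verify that this domain coincides with that of the generator $\delta_a$, after which closedness transfers from $\delta_a$ to $\delta_u$ automatically.
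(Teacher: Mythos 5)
Your proposal follows essentially the same route as the paper: conjugation by the unitary multiplication operators $e^{itl}$ gives a one-parameter automorphism group of $B^p[\G_u]$, strong continuity is checked at $t=0$ on the dense subalgebra $\C^p[\G_u]$ via the estimate $|e^{is}-1|\leq |s|$ together with $|l(x)-l(y)|\leq l(xy^{-1})$ and the boundedness of $|T|$, and closedness of $\delta_u$ is then inherited from the infinitesimal generator, which equals $i\delta_u$. The domain identification you flag is precisely the point the paper handles by adapting a cited result on infinitesimal generators of flows, so your argument is correct and matches the paper's.
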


Inspired by the approach in \cite[Theorem 4.2]{hou2017spectral} and \cite{JiSmoothDenseSubalgs}, we have the following result.

\begin{theorem} \label{thm: RD_q and RD_p implies isomorphisms in K-theory}
	Let $\G$ be an étale groupoid which has property $RD_p$ and $RD_q$, for $p,q \in (1, \infty)$ Hölder conjugate, with respect to some continuous length function $l$. We may then identify $S_{p}^{l}(\G)$ as a Fréchet subalgebra of $\Fp(\G)$ and $F_{\lambda}^{q}(\G)$. Under these identifications, $S_{p}^{l}(\G)$ is a spectral invariant Fréchet subalgebra of $F_{\lambda}^{p}(\G) $ and $F_{\lambda}^{q}(\G)$, and the inclusions induce isomorphisms on their $K$-theories. In particular, 
	\begin{equation*}
		K_{\ast} (F_{\lambda}^{p}(\G)) \cong K_* (S^l_p(\G)) \cong K_{\ast}(F_{\lambda}^{q}(\G)).
	\end{equation*}
\end{theorem}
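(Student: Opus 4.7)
The plan is to prove spectral invariance of $\A_p := j_p^{-1}(S^l_p(\G))$ in $\Fp(\G)$, and symmetrically of $\A_q := j_q^{-1}(S^l_p(\G))$ in $\Fq(\G)$, using the Ji-style criterion of Lemma \ref{lem: dense spectral invariant Frechet algebra, Ji}, and then to derive the $K$-theory isomorphism from Lemma \ref{lem: Connes lemma that spectral invariance induce isomorphism in K-theory}. The identification of $S^l_p(\G)$ with $\A_p \subset \Fp(\G)$ and $\A_q \subset \Fq(\G)$ as dense Fréchet subalgebras is precisely the content of Proposition \ref{prop: Space of rapidly decreasing functions form a Fréchet algebra}, establishing the first assertion of the theorem.

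For each $u \in \G^{(0)}$, Lemma \ref{lem: derivation at u is closed} supplies a closed unbounded derivation $\delta^{(p)}_u = [M_l, \cdot]$ on $B^p[\G_u]$, and analogously $\delta^{(q)}_u$ on $B^q[\G_u]$. Via the isometric representation $\bigoplus_u \lambda_u \colon \Fp(\G) \hookrightarrow \prod_u B^p[\G_u]$, the coordinatewise assembly $\delta^{(p)} := (\delta^{(p)}_u)_u$ gives a closed derivation on $\prod_u B^p[\G_u]$, inside which $\Fp(\G)$ is a closed Banach subalgebra. Set $A_0 := \bigcap_{k \geq 0} \mathrm{Dom}\bigl((\delta^{(p)})^k\bigr) \cap \Fp(\G)$. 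Lemma \ref{lem: translation set is an algebra} guarantees $C_c(\G) \subseteq A_0$, so $A_0$ is dense in $\Fp(\G)$, and Lemma \ref{lem: dense spectral invariant Frechet algebra, Ji} then produces $A_0$ as a spectral invariant Fréchet subalgebra of $\Fp(\G)$ with seminorms $a \mapsto \|(\delta^{(p)})^k(a)\|_{\Fp(\G)}$. It therefore suffices to identify $A_0 = \A_p$ as Fréchet algebras.

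For the continuous inclusion $\A_p \subseteq A_0$, a direct computation yields the kernel identity $[(\delta^{(p)}_u)^k(\lambda_u(a))]_{xy} = j_p(a)(xy^{-1})(l(x)-l(y))^k$; combined with $|l(x)-l(y)| \leq l(xy^{-1})$ this gives the pointwise majorization $|(\delta^{(p)}_u)^k(\lambda_u(a))| \leq \lambda_u(|j_p(a)| l^k)$, and hence $\|(\delta^{(p)})^k(a)\|_{\Fp(\G)} \leq \||j_p(a)| l^k\|_{\Fp(\G)} \leq C\|a\|_{p,\, k+k_0}$ by property $RD_p$. The reverse inclusion $A_0 \subseteq \A_p$ is the crux. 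Evaluating $(\delta^{(p)}_u)^k(\lambda_u(a))$ at the basis vector at $u$ produces $\bigl(\sum_{x \in \G_u} |j_p(a)(x)|^p l(x)^{pk}\bigr)^{1/p} \leq \|(\delta^{(p)}_u)^k(\lambda_u(a))\|$ uniformly in $u$, bounding the source-fiber contribution to $\|j_p(a)\|_{p,k}$. For the range-fiber contribution one rewrites $\sum_{z \in \G^u} |j_p(a)(z)|^p = \sum_{w \in \G_u} |j_q(a^*)(w)|^p$ via Lemma \ref{lem: involution extends to isometric anti-homomorphism}, and exploits the operator identity $(M_l T - TM_l)^* = -(M_l T^* - T^* M_l)$ to conclude $a \in A_0$ if and only if $a^* \in A_0^{(q)} := \bigcap_{k} \mathrm{Dom}\bigl((\delta^{(q)})^k\bigr) \cap \Fq(\G)$, with equal seminorms. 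Applying the $q$-derivation at the basis vector at $u$ in $\ell^q(\G_u)$ then controls $\sum_{w \in \G_u} |j_q(a^*)(w)|^q l(w)^{qK}$ for every $K$; these $q$-th power bounds are upgraded to the required $p$-th power bounds by combining the contractivity $|j_q(a^*)| \leq \|a\|_{\Fp(\G)}$ from Proposition \ref{prop: identification of Fp functions with C0 functions} with the elementary inequality $l^{pk} \leq 1 + l^{qK}$ valid for $K \geq pk/q$. This yields $\sum_{w \in \G_u} |j_q(a^*)|^p l^{pk} \leq \|a\|^{p-q}\bigl(\|a\|^q + \|(\delta^{(q)})^K(a^*)\|_{\Fq(\G)}^q\bigr) < \infty$, so $j_p(a) \in S^l_p(\G)$ and hence $a \in \A_p$.

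Running the same argument with the roles of $p$ and $q$ exchanged, which is legitimate since both $RD_p$ and $RD_q$ are assumed, gives $\A_q$ spectral invariant in $\Fq(\G)$. Applying Lemma \ref{lem: Connes lemma that spectral invariance induce isomorphism in K-theory} then yields $K_{\ast}(\Fp(\G)) \cong K_{\ast}(\A_p) \cong K_{\ast}(S^l_p(\G)) \cong K_{\ast}(\A_q) \cong K_{\ast}(\Fq(\G))$, the middle two isomorphisms being the Fréchet-algebra isomorphisms from Proposition \ref{prop: Space of rapidly decreasing functions form a Fréchet algebra}. The principal obstacle is the reverse inclusion $A_0 \subseteq \A_p$: the range-fiber $p$-th power sums must be reached indirectly through the involution and the dual $q$-derivation, and the resulting $q$-th power bounds must be upgraded to $p$-th power bounds via sup-norm control and an exponent comparison argument.
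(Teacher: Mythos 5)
Your proposal is correct in substance and follows the same core strategy as the paper's proof: the commutator derivations $\delta_u = [M_l,\cdot]$ of Lemma \ref{lem: derivation at u is closed}, Ji's criterion (Lemma \ref{lem: dense spectral invariant Frechet algebra, Ji}), evaluation of $\delta_u^{k}(\lambda_u(a))$ at the basis vector $\epsilon_u$ to recover the weighted source-fibre sums, passage through the involution and the dual $q$-derivation to reach the range-fibre sums, and an exponent comparison converting $q$-th power bounds into $p$-th power bounds. The one structural difference is where Ji's lemma is applied. You assemble the $\delta_u$ into a single closed derivation on the $\ell^{\infty}$-product $\prod_u B^p[\G_u]$ and apply the lemma once with $A = \Fp(\G)$, so that $A_0$ carries the uniform-in-$u$ seminorms $\sup_u\lVert\delta_u^{k}(\lambda_u(\cdot))\rVert$ by construction. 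The paper instead applies Ji's lemma fibrewise, obtaining spectral invariance of each $S_{u,p}(\G)$ in $A_{u,p}$, and must then run a separate induction (using $\delta_{u,p}^{k}(\lambda_{u,p}(a)\lambda_{u,p}(b))=0$) to show that the derivation norms of the inverse are bounded uniformly over $u$. Your global formulation absorbs that induction into Ji's lemma, at the small cost of checking that the coordinatewise derivation on the product is closed (it is, since each $\delta_u$ is, and convergence in the product is uniform in $u$). Your route to the range fibres via the transpose identity $\delta_{u,q}(T^{t}) = -\delta_{u,p}(T)^{t}$ also differs mildly from the paper, which instead invokes fibrewise spectral invariance of $S_{u,q}(\G)$ applied to $\lambda_{u,q}(a^{\ast})$ and its inverse; both work, though you should verify that domains of the closed derivations correspond under transposition (this follows from $a_t^{(q)}(T^{t}) = (a_{-t}^{(p)}(T))^{t}$ and isometry of transposition).

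Two points need tightening. First, the upgrade $|j_q(a^{\ast})(w)|^{p} \leq \lVert a\rVert^{p-q}|j_q(a^{\ast})(w)|^{q}$ requires $p \geq q$; you should state ``assume without loss of generality $p \geq q$'' at the outset, as the paper does (this is harmless since the hypotheses are symmetric and $S_{p}^{l}(\G) = S_{q}^{l}(\G)$ by Lemma \ref{lem: inclusion of Frechet spaces Slp and equality for RDp, p > 2}). Second, and relatedly, the closing claim that the $\Fq(\G)$-side follows by ``running the same argument with the roles of $p$ and $q$ exchanged'' is not literally true: a verbatim exchange would require upgrading $p$-th power bounds to $q$-th power bounds, which goes the wrong way when $p > q$. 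What actually works --- and what your own machinery already delivers --- is to deduce $\A_q = A_0^{(q)}$ from $\A_p = A_0^{(p)}$ via the two involution correspondences $a \in \A_q \Leftrightarrow a^{\ast} \in \A_p$ and $a \in A_0^{(q)} \Leftrightarrow a^{\ast} \in A_0^{(p)}$, and then invoke Ji's lemma for $A_0^{(q)}$ in $\Fq(\G)$ exactly as on the $p$-side. This is in effect how the paper handles the $q$-side as well, so the fix is cosmetic rather than substantive.
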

\begin{proof}
	For the sake of exposition, we shall only do the argument in the unital case. Assume without loss of generality that $p \geq q$. Let $u \in \G^{(0)}$ be given, and let $\lambda_{u,p} \colon F_{\lambda}^{p}(\G) \to B(\ell^p (\G_u))$ be the corresponding left regular representation, which for $f \in S_{p}^{l} (\G)$ is given by $\lambda_{u,p}  (f) (\xi)(x) = f \ast \xi (x)$, for $x \in \G_u$ and $\xi \in C_c (\G_u)$. Let $A_{u,p}$ denote the Banach algebra given by the closure of $\lambda_{u,p}  (C_c (\G_u))$ in $B(\ell^p(\G_u))$, and let $\delta_{u,p}$ be the closed derivation on $B^p [\G_u]$ from Lemma \ref{lem: derivation at u is closed}. Notice that $\lambda_{u,p} (\Fp(\G)) \subset A_{u,p} \subset B^p [\G_u]$. For $f \in C_c (\G)$ and $\xi \in \ell^p (\G_u)$, we have that 
	\begin{equation*}
		\delta_{u,p} (\lambda_{u,p}  (f)) (\xi) (x) = \sum_{y \in \G_u} f(x y^{-1}) \xi(y) ( l(x) - l(y) ) ,
	\end{equation*}
	and one can show by induction that 
	\begin{equation*}
		\delta_{u,p}^{k} (\lambda_{u,p} (f)) (\xi) (x) = \sum_{y \in \G_u} f(x y^{-1}) \xi(y) ( l(x) - l(y) )^k ,
	\end{equation*}
	for $k \in \N_0 $. This implies that 
	\begin{align*}
		\big\lVert \delta_{u,p}^{k}(\lambda_{u,p} (f)) \xi \big\rVert_{\ell^{p}(\G_u)}^{p} &= \sum_{x \in \G_u} \big| \delta_{u}^{k} (\lambda_{u,p} (f)) (\xi) (x) \big|^{p} \\
		&= \sum_{x \in \G_u} \Big| \sum_{y \in \G_u} f(x y^{-1}) \xi(y) ( l(x) - l(y) )^k \Big|^{p} \\ 
		&\leq \sum_{x \in \G_u} \Big( \sum_{y \in \G_u} | f(x y^{-1}) | l(x y^{-1})^k | \xi(y) | \Big)^{p} \\
		&\leq \sum_{x \in \G_u} \Big( \sum_{y \in \G_u} f^{(k)}(x y^{-1}) | \xi(y) | \Big)^{p} \\ 
		&\leq \big\lVert \lambda_{u,p}  (f^{(k)}) ( | \xi | ) \big\rVert_{\ell^p (\G_u)}^{p} \leq \big\lVert \lambda_{u,p}  (f^{(k)}) \big\rVert_{B(\ell^p (\G_u))}^{p} \lVert \xi \rVert_{\ell^p (\G_u)}^{p},
	\end{align*}
	where $f^{(k)} (x) = |f (x)| (1 + l(x))^k$. Clearly $f^{(k)} \in C_c(\G)$, and so 
	\begin{equation} \label{eq: norm estimate for iterated derivation}
		\big\lVert \delta_{u,p}^{k} (\lambda_{u,p} (f)) \big\rVert_{B(\ell^p (\G_u))} \leq \big\lVert \lambda_{u,p}  (f^{(k)}) \big\rVert_{B(\ell^p (\G_u))} \leq \big\lVert f^{(k)} \big\rVert_{F_{\lambda}^{p}(\G)} \leq c \lVert f \rVert_{p, k + k^{\prime}},	
	\end{equation}
	where $k^{\prime} \in \N_0$ and $c > 0 $ are the constants given by property $RD_p$. Given $f \in S_{p}^{l}(\G)$, there exists a sequence $f_n \in C_c (\G)$ such that $f_n \to f$ under the norm $\lVert \cdot \rVert_{p, k + k^{\prime}}$. Then $f_n \to f$ in $\Fp (\G)$, so that $\lambda_{u,p}(f_n) \to \lambda_{u,p}(f)$. Also, \eqref{eq: norm estimate for iterated derivation} shows that 
	\begin{equation*}
		\big\lVert \delta_{u,p}^{k} (\lambda_{u,p} (f_n)) - \delta_{u,p}^{k} (\lambda_{u,p} (f_m)) \big\rVert_{B(\ell^p (\G_u))} \leq c \lVert f_n - f_m \rVert_{p,k + k^{\prime}} ,
	\end{equation*}
	and so there exists $a \in B^p [\G_u]$ such that $\delta_{u,p}^{k} (\lambda_{u,p} (f_n)) \to a$. Since $\delta_{u,p}^{k}$ is closed, $\lambda_{u,p}(f) \in \mathrm{Dom}(\delta_{u,p}^{k})$ and $\delta_{u,p}^{k} (\lambda_{u,p} (f)) = a$. Putting 
	\begin{equation*}
		S_{u,p}(\G) = \Big( \bigcap_{k = 0}^{\infty} \mathrm{Dom}(\delta_{u,p}^{k}) \Big) \cap A_{u,p} ,
	\end{equation*}
	the above shows that $S_{u,p} (\G)$ contains $\lambda_{u,p} (S_{p}^{l}(\G))$, and so by Lemma \ref{lem: dense spectral invariant Frechet algebra, Ji}, it is dense and hence spectral invariant in $A_{u,p} $. We define $S_{u,q} (\G)$ similarly as $S_{u,p}(\G)$ with the derivation $\delta_{u,q}$ having the same definition on $B^q [\G_u]$ as $\delta_{u,p}$ had on $B^p [\G_u]$. By an analogous argument as above, one also obtains that $\lambda_{u,q} (S_{p}^{l} (\G) ) \subset S_{u,q}(\G)$, and so again by Lemma \ref{lem: dense spectral invariant Frechet algebra, Ji}, $S_{u,q}(\G)$ is spectral invariant in $A_{u,q}$.

	We know that for $a \in F_{\lambda}^{p}(\G)$, $a^{\ast} \in F_{\lambda}^{q}(\G)$. By Lemma \ref{lem: involution extends to isometric anti-homomorphism}, using the identification provided by the contractive injections $j_p \colon F_{\lambda}^{p}(\G) \to C_0 (\G)$ and $j_q \colon F_{\lambda}^{q}(\G) \to C_0 (\G)$ from Proposition \ref{prop: identification of Fp functions with C0 functions}, $a^{\ast}$ is the function on $\G$ given by $a^{\ast} (x) = \overline{a(x^{-1})}$. If $a \in F_{\lambda}^{p}(\G)$ is such that $\lambda_{u,p} (a) \in S_{u,p} (\G)$ and $\lambda_{u,q} (a^{\ast}) \in S_{u,q} (\G)$, then $\lambda_{u,p} (a) \in \mathrm{Dom}(\delta_{u,p}^{k}) \subset B(\ell^p (\G_u))$ and $\lambda_{u,q} (a^{\ast}) \in \mathrm{Dom}(\delta_{u,q}^{k}) \subset B(\ell^q (\G_u))$, for all $k \geq 0$. If $\epsilon_u$ denotes the basis vector corresponding to the unit $u$, then  
	\begin{equation*}
		\sum_{x \in \G_u} | a(x) |^p l(x)^{pk} = \big\lVert \delta_{u,p}^{k} (\lambda_{u,p} (a)) \epsilon_u \big\rVert_{\ell^p (\G_u)}^{p} \leq \big\lVert \delta_{u,p}^{k} (\lambda_{u,p} (a)) \big\rVert_{B(\ell^p(\G_u))}^p < \infty ,
	\end{equation*}
	and 
	\begin{equation*}
		\sum_{x \in \G^{u}} | a(x) |^q l(x)^{qk} = \big\lVert \delta_{u,q}^{k} (\lambda_{u,q} (a^{\ast})) \epsilon_u \big\rVert_{\ell^q (\G_u)}^{q} \leq \big\lVert \delta_{u,q}^{k} (\lambda_{u,q} (a^{\ast})) \big\rVert_{B(\ell^q(\G_u))}^q < \infty .
	\end{equation*}
	Let $a \in S_{p}^{l}(\G)$ and assume that $b \in F_{\lambda}^{p}(\G)$ is the inverse of $a$ in $F_{\lambda}^{p}(\G)$. Then for each $u \in \G^{(0)}$, $\lambda_{u,p} (b)$ is the inverse of $\lambda_{u,p} (a)$ in $A_{u,p}$. Since $\lambda_{u,p} (a) \in S_{u,p}(\G)$ and $S_{u,p}(\G)$ is spectral invariant, $\lambda_{u,p} (b) \in S_{u,p} (\G)$. Thus, for each $k \in \N_0$, we have 
	\begin{equation*}
		\sum_{x \in \G_u} | b(x) |^p l(x)^{pk} = \big\lVert \delta_{u,p}^{k} (\lambda_{u,p} (b)) \epsilon_u \big\rVert_{\ell^p (\G_u)}^{p} \leq \big\lVert \delta_{u,p}^{k} (\lambda_{u,p} (b)) \big\rVert_{B(\ell^p (\G_u))}^{p} < \infty .
	\end{equation*}
	Also, $b^{\ast}$ is the inverse of $a^{\ast}$ in $F_{\lambda}^{q}(\G)$, and so for each $u \in \G^{(0)}$, $\lambda_{u,q} (b^{\ast})$ is the inverse of $\lambda_{u,q} (a^{\ast})$ in $A_{u,q}$. Since $a^{\ast} \in S_{p}^{l}(\G)$, we have that $\lambda_{u,q} (a^{\ast}) \in S_{u,q}(\G)$. Since $S_{u,q}(\G)$ is spectral invariant in $A_{u,q}$, we have that $\lambda_{u,q} (b^{\ast}) \in S_{u,q}(\G)$; hence for each $k \in \N_0$, we have 
	\begin{equation*}
		\sum_{x \in \G^{u}} | b(x) |^q l(x)^{qk} = \big\lVert \delta_{u,q}^{k} (\lambda_{u,q} (b^{\ast})) \epsilon_u \big\rVert_{\ell^q(\G_u)}^{q} \leq \big\lVert \delta_{u,q}^{k} (\lambda_{u,q} (b^{\ast})) \big\rVert_{B(\ell^q (\G_u))}^{q} < \infty .
	\end{equation*}
	Let $\mathcal{F}^{u} \subset \G^{u}$ be any finite subset. Since $p \geq q$, we have that 
	\begin{equation*}
		\Big( \sum_{x \in \mathcal{F}^{u}} | b(x) |^p l(x)^{pk} \Big)^{q/p} \leq \sum_{x \in \mathcal{F}^{u}} | b(x) |^q l(x)^{qk} \leq \sum_{x \in \G^{u}} | b(x) |^q l(x)^{qk} < \infty .
	\end{equation*}
	It is to obtain the above estimate that we need property $RD_p$ for $p$ the largest Hölder exponent. This means in particular that for any $k \in \N_0$, we have 
	\begin{equation*}
		\sum_{x \in \G^{u}} | b(x) |^p l(x)^{pk} < \infty .
	\end{equation*}
	This is not yet enough to conclude that $b \in S_{p}^{l}(\G)$, since we need 
	\begin{equation*}
		\max \Big\lbrace \sup_{u \in \G^{(0)}} \sum_{x \in \G_u} | b(x) |^p l(x)^{pk} , \sup_{u \in \G^{(0)}} \sum_{x \in \G^u} | b(x) |^p l(x)^{pk} \Big\rbrace < \infty ,
	\end{equation*}
	for all $k \in \N_0$. The binomial theorem together with the inequality $(1 + l(x))^{pk} \leq 2^{pk -1}(1 + l(x)^{pk})$ then implies the $\lVert \cdot \rVert_{p,k}$-norm estimate we really need. But this is indeed the case, because it can be shown by induction that 
	\begin{equation*}
		\sup_{u \in \G^{(0)}} \big\lVert \delta_{u,p}^{k} (\lambda_{u,p} (b)) \big\rVert_{B(\ell^p (\G_u))} < \infty,
	\end{equation*}
	and 
	\begin{equation*}
		\sup_{u \in \G^{(0)}} \big\lVert \delta_{u,q}^{k} (\lambda_{u,q} (b^{\ast})) \big\rVert_{B(\ell^q (\G_u))} < \infty,
	\end{equation*}
	for all $k \in \N_0$. The induction argument uses that $\delta_{u,p}$ and $\delta_{u,q}$ are derivations, that $\lambda_{u,p} (a)$ is the inverse of $\lambda_{u,p} (b)$ and $\lambda_{u,q} (a^{\ast})$ is the inverse of $\lambda_{u,q} (b^{\ast})$, for all $u \in \G^{(0)} $, that 
	\begin{equation*}
		\delta_{u,p}^{k} (\lambda_{u,p} (a) \lambda_{u,p} (b)) = 0 \text{ and } \delta_{u,q}^{k} (\lambda_{u,q} (a^{\ast}) \lambda_{u,q} (b^{\ast})) = 0,
	\end{equation*}
	for all $k \geq 1$, and the base cases are valid because we know that 
	\begin{equation*}
		\sup_{u \in \G^{(0)}} \lVert \lambda_{u,p} (b) \rVert_{B(\ell^p(\G_u))} = \lVert b \rVert_{F_{\lambda}^{p}(\G)} < \infty , 
	\end{equation*}
	and that 
	\begin{equation*}
		\sup_{u \in \G^{(0)}} \lVert \lambda_{u,q} (b^{\ast}) \rVert_{B(\ell^q(\G_u))} = \lVert b^{\ast} \rVert_{F_{\lambda}^{q}(\G)} < \infty .
	\end{equation*}
	
	Now, if $a \in S_{p}^{l}(\G)$ with inverse $b \in F_{\lambda}^{q}(\G)$ in $F_{\lambda}^{q}(\G)$, then $a^{\ast} \in S_{p}^{l}(\G)$ with inverse $b^{\ast} \in F_{\lambda}^{p}(\G)$ in $F_{\lambda}^{p}(\G)$. By what we have done above, $b^{\ast} \in S_{p}^{l}(\G)$, but then $b \in S_{p}^{l}(\G)$. It follows by Lemma \ref{lem: Connes lemma that spectral invariance induce isomorphism in K-theory} that 
	\begin{equation*}
		K_{\ast} (\Fp (\G)) \cong K_\ast (S_{p}^{l}(\G)) \cong K_\ast (F_{\lambda}^{q}(\G)) .
	\end{equation*}
\end{proof}

When the groupoid has polynomial growth, we can say even more about the $K$-theory of the reduced $L^p$-operator algebras associated to it.

\begin{theorem} \label{thm: Polynomial growth implies all K-groups are isomorphic}
	Let $\G$ be an étale groupoid endowed with a continuous length function for which it has polynomial growth. Then the groups $ K_{\ast}(F_{\lambda}^{p}(\G))$, for $p \in (1,\infty)$, are all isomorphic.
	\begin{proof}
		By Proposition \ref{prop: polynomial growth implies RD_q for all q}, $\G$ has property $RD_p$ for all $p \in [1, \infty)$; hence we may apply Theorem \ref{thm: RD_q and RD_p implies isomorphisms in K-theory} to conclude that for all $p \in (1, \infty)$, 
		\begin{equation*}
			K_{\ast}(F_{\lambda}^{p}(\G)) \cong K_{\ast}(S_{p}^{l}(\G)) .
		\end{equation*}
		So if we can show that for all $p \in (1, \infty)$, we have $S_{p}^{l}(\G) = S_{1}^{l}(\G)$ as Fréchet algebras, then the result would follow. First of all, one can apply Hölders inequality together with the fact that $\G$ has polynomial growth to get the continuous inclusion $ S_{p}^{l}(\G) \subset S_{1}^{l}(\G)$; to see this, let $c > 0$ and $k \in \N_0$ be the constant from Proposition \ref{prop: polynomial growth implies RD_q for all q} such that 
		\begin{equation*}
			\sup_{u \in \G^{(0)}} \sum_{x \in \G_u} (1 + l(x))^{-qk} \leq c .
		\end{equation*}
		Then for any $f \in S_{p}^{l}(\G)$, $n \in \N_0$ and $u \in \G^{(0)}$, we have 
		\begin{equation*}
			\sum_{x \in \G_u} | f(x) | (1 + l(x))^n = \sum_{x \in \G_u} | f(x) | (1 + l(x))^{k+n} (1 + l(x))^{-k} \leq c^{1/q} \lVert f \rVert_{p,k+n} .
		\end{equation*}
		Similarly, 
		\begin{equation*}
			\sum_{x \in \G^u} | f(x) | (1 + l(x))^n \leq c^{1/q} \lVert f \rVert_{p,k+n} ,
		\end{equation*}
		so that $\lVert f \rVert_{1,n} \leq c^{1/q} \lVert f \rVert_{p,k+n}$. The other inclusion follows by Lemma \ref{lem: inclusion of Frechet spaces Slp and equality for RDp, p > 2}.
	\end{proof}
\end{theorem}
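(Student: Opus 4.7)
The plan is to reduce the theorem to the independence of the Fréchet algebra $S^l_p(\G)$ on $p$, exploiting the two results developed so far. By the proposition asserting that polynomial growth implies property $RD_p$ for every $p \in [1,\infty)$, the groupoid $\G$ simultaneously has $RD_p$ and $RD_q$ for every Hölder-conjugate pair in $(1,\infty)$. Consequently, Theorem \ref{thm: RD_q and RD_p implies isomorphisms in K-theory} already gives
\begin{equation*}
K_{\ast}(F_{\lambda}^{p}(\G)) \cong K_{\ast}(S_{p}^{l}(\G))
\end{equation*}
for every $p \in (1,\infty)$. So the entire theorem reduces to showing that $S_{p}^{l}(\G) = S_{1}^{l}(\G)$ as Fréchet algebras for every $p \in (1,\infty)$; then every $K_\ast(F_\lambda^p(\G))$ is isomorphic to the common value $K_\ast(S_1^l(\G))$.

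The inclusion $S_{1}^{l}(\G) \subset S_{p}^{l}(\G)$, continuous in the Fréchet topologies, is immediate from Lemma \ref{lem: inclusion of Frechet spaces Slp and equality for RDp, p > 2}, which requires only a locally bounded length function and the monotonicity $\|\cdot\|_{p,k} \leq \|\cdot\|_{q,k}$ for $q \leq p$.

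For the reverse inclusion, I would mimic the argument used in Proposition \ref{prop: polynomial growth implies RD_q for all q}: pick $k \in \N_0$ large enough (depending on the Hölder conjugate $q$ of $p$) so that polynomial growth yields a uniform bound
\begin{equation*}
\sup_{u \in \G^{(0)}} \sum_{x \in \G_u} (1+l(x))^{-qk} \leq c < \infty,
\end{equation*}
and likewise for the range fibers. Then, for $f \in S_p^l(\G)$ and any $n \in \N_0$, factor $(1+l)^n = (1+l)^{k+n}(1+l)^{-k}$ and apply Hölder's inequality to estimate
\begin{equation*}
\sum_{x \in \G_u} |f(x)|(1+l(x))^n \leq c^{1/q} \|f\|_{p,k+n},
\end{equation*}
with the analogous estimate over $\G^u$. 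Taking the supremum over $u \in \G^{(0)}$ yields $\|f\|_{1,n} \leq c^{1/q} \|f\|_{p,k+n}$, i.e.\ a continuous inclusion $S_p^l(\G) \hookrightarrow S_1^l(\G)$.

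There is no serious obstacle: the only nontrivial ingredient is the uniform summability $\sup_u \sum_{x\in\G_u}(1+l(x))^{-qk} < \infty$, which is precisely the content of polynomial growth once $qk$ exceeds the growth exponent plus one. Combining the two continuous inclusions gives $S_{p}^{l}(\G) = S_{1}^{l}(\G)$ as Fréchet algebras, and the isomorphisms of $K$-groups follow.
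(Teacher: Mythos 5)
Your proposal is correct and follows essentially the same route as the paper's proof: reduce to the identity $S_p^l(\G)=S_1^l(\G)$ of Fréchet algebras, obtain $S_1^l(\G)\subset S_p^l(\G)$ from Lemma \ref{lem: inclusion of Frechet spaces Slp and equality for RDp, p > 2}, and obtain the reverse inclusion by combining Hölder's inequality with the uniform summability of $(1+l)^{-qk}$ supplied by polynomial growth. The estimates you write down are precisely those in the paper.
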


\section{Examples}\label{sec:Examples}

\subsection{Coarse Groupoids}
In this section, we apply the results on polynomial growth and property $RD_p$ from Section \ref{sec: Length Functions Rapid Decay and Polynomial Growth} and Section \ref{sec: applications of polynomial growth and RDp} to groupoids arising from  bounded geometry coarse structures. Since the graph decomposition result Proposition \ref{prop:graph-decomposition} could be of more general interest, we consider bounded geometry coarse spaces before specializing to the extended metric space setting where we can naturally apply the main results of the paper. We refer the reader to \cite{Roe2003} for material on coarse spaces. 

Let $X$ be a set with coarse structure $\E$. For any controlled set $E \in \E$, we will denote by 
\begin{equation*}
	E_x = \{ y\in X \mid (y,x) \in E \} \quad \text{and } \quad E^x = \{ y \in X \mid (x,y) \in E \},
\end{equation*}
for $x \in X$. Moreover, for $E, F \in \E$, we will denote by
\begin{equation*}
	E \circ F = \{ (x,z) \in X \times X \mid \exists y \in X \text{ such that } (x,y)\in E \text{ and } (y,z) \in F  \}.
\end{equation*}
By $E^n$ we will understand the $n$-iterated product $E \circ \cdots \circ E$. 

As in \cite[Definition 3.25]{Roe2003} and \cite{SkandalisTuYu2002}, we will say that the coarse space $(X,\E)$ is \emph{uniformly locally finite} if
\begin{equation*}
	\sup_{x\in X} \max \{ \vert E_x \vert , \vert E^x \vert  \} < \infty. 
\end{equation*}
for every $E \in \E$.  We define the \emph{growth} of $E \in \E$ in the point $x \in X$ to be the function $\mathrm{gr}_{E,x}\colon \N \to \N$ given by
\begin{equation}\label{eq:coarse-space-growth-function}
	\mathrm{gr}_{E,x} \colon n \mapsto \vert (E^n)_x \vert.
\end{equation}
Note that this is the growth function of \cite[pg. 42]{Roe2003} using the diagonal gauge $\Delta= \{(x,x) \mid x \in X\}$.  
For two functions $f,g \colon \N \to \N$ we write $f \preceq g$ if there exist $a,b >0$ such that $f(n) \leq a g(bn)$ for all sufficiently large $n$. We write $f \approx g$ if $f \preceq g$ and $g \preceq f$. We may then make sense of the growth type of a coarse space $X$. We say $X$ has the \emph{growth type} of $f \colon \N \to \N$ if for every $E \in \E$ and every $x \in X$, $\mathrm{gr}_{E,x} \preceq f$, and for at least one $E$ and one $x$, $f \preceq \mathrm{gr}_{E,x}$. In particular, $X$ has \emph{polynomial growth} if this is true for a polynomial $f$.

We proceed to cover the graph decomposition lemma \cite[Lemma 4.10]{Roe2003}. A \emph{partial bijection} of a coarse space $(X, \E)$ is a triple $(D,R, \tau)$ consisting of two subsets $D$ and $R$ of $X$ together with a bijection $\tau \colon D \to R$. If the graph $\Gamma_\tau := \{ (\tau(x), x) \mid x \in D \} \in \E$, we say $(D,R, \tau)$ is a \emph{partial translation}. We say that a collection $\{ \tau_1, \ldots , \tau_n \}$ of partial bijections of $X$ are \emph{orthogonal} if $\Gamma_{\tau_i} \cap (D_j \times R_j) = \emptyset$ whenever $i \neq j$. Note that this is strictly stronger than the graphs being disjoint. The following lemma contains the parts of \cite[Lemma 4.10]{Roe2003} we need for our purposes.
\begin{lemma}[{ \cite[Lemma 4.10]{Roe2003}}]\label{lemma:Roe-graph-decomposition-lemma-410}
	Let $X$ be a set and let $E \subseteq X \times X$. The following conditions are equivalent:
	\begin{enumerate}
		\item[a)] $E$ is the union of the graphs of a finite orthogonal set of partial bijections;
		\item[b)] $E$ is controlled for the universal bounded geometry coarse structure (on $X$); that is, $\sup_{x \in X} \max \{ \vert E_x \vert , \vert E^x \vert \} < \infty$. 
	\end{enumerate}
\end{lemma}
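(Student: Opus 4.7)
The plan is to treat the two directions separately, with (b) $\Rightarrow$ (a) being the substantial one. The forward direction (a) $\Rightarrow$ (b) is immediate from counting: if $E = \bigcup_{i=1}^n \Gamma_{\tau_i}$, then for any $x \in X$ the column $E_x$ meets each $\Gamma_{\tau_i}$ in at most one point (namely $\tau_i(x)$ when $x \in D_i$), and similarly for the row $E^x$. Hence $\sup_{x \in X} \max\{|E_x|, |E^x|\} \leq n < \infty$.

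For (b) $\Rightarrow$ (a), I would first view $E$ as the edge set of a bipartite multigraph $B$ whose vertex sets are two disjoint copies $X_s$ and $X_r$ of $X$: a pair $(y, x) \in E$ gives an edge joining $x \in X_s$ to $y \in X_r$. The hypothesis $N := \sup_x \max\{|E_x|, |E^x|\} < \infty$ translates exactly to $B$ having maximum vertex degree $N$. K\"onig's bipartite edge coloring theorem then yields a proper $N$-edge-coloring of $B$. Each color class $M_i$ is a matching, and upon identifying the two copies of $X$ it is the graph $\Gamma_{\tau_i}$ of a partial bijection $\tau_i$ with domain $D_i$ and range $R_i$ being the two ``shadows'' of $M_i$ in $X$. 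Thus $E = \bigsqcup_{i=1}^N \Gamma_{\tau_i}$ with pairwise disjoint graphs.

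The remaining task is to upgrade this disjointness to the stronger orthogonality condition. Since orthogonality is strictly stronger than disjointness of graphs (as noted explicitly in the paper), further refinement is required. I would choose a proper vertex coloring $c \colon X \to \{1, \dots, K\}$ of the undirected graph on $X$ whose adjacency is $y \sim x \iff (y, x) \in E \cup E^{-1}$; this graph has maximum degree at most $2N$ and so admits such a coloring with $K \leq 2N+1$ colors. Stratifying $E$ into $E_{ij} := \{(y, x) \in E : c(y) = i, c(x) = j\}$ and applying K\"onig within each stratum, any putative violation $(y, x) \in \Gamma_{\tau} \cap (D_{\tau'} \times R_{\tau'})$ with $\tau$ from stratum $(i,j)$ and $\tau'$ from stratum $(i', j')$ would force $c(y) = i = j'$ and $c(x) = j = i'$, so $(i', j') = (j, i)$. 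Orthogonality then holds between all partial bijections except possibly those built from reverse color-pair strata $(i,j)$ and $(j,i)$, which one handles by a further joint decomposition of $E_{ij} \cup E_{ji}$ on the bipartite graph between $c^{-1}(i)$ and $c^{-1}(j)$.

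The main obstacle is precisely this orthogonality upgrade: the condition $\Gamma_{\tau_i} \cap (D_j \times R_j) = \emptyset$ mixes the graph of one partial bijection with the support sets of another, so it does not follow from any single matching or edge coloring argument. The combinatorial refinement sketched above multiplies the number of partial bijections by a constant depending only on $N$, so the resulting orthogonal family remains finite, establishing (a).
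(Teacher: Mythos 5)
Your direction a) $\Rightarrow$ b) is fine, and K\"onig's theorem does give a decomposition of $E$ into $N$ \emph{disjoint} graphs of partial bijections. The gap is exactly where you flag it — the upgrade from disjointness to orthogonality — and the fix you sketch does not close it. The structural problem is that orthogonality is a constraint on pairs of points of $E$ that need not share any coordinate: $(y,x)\in\Gamma_{\tau}$ violates orthogonality against $\tau'$ as soon as $y$ and $x$ separately lie in the relevant domain/range sets of $\tau'$, which is witnessed by two \emph{other} points of $E$ with all coordinates possibly distinct. Edge colorings only ever separate points of $E$ sharing a coordinate, and your vertex coloring of $X$ only constrains the colors of the two coordinates of a single point of $E$; neither sees these crossed configurations. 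Concretely, take $X=\{a_1,a_2,b_1,b_2\}$ and $E=\{(a_1,b_1),(b_2,a_1),(b_1,a_2)\}$, so $N=1$ and $E$ is already the graph of a single partial bijection, i.e.\ an orthogonal family of size one. Your graph on $X$ is the path $b_2-a_1-b_1-a_2$, so a proper coloring may assign color $i$ to $a_1,a_2$ and color $j$ to $b_1,b_2$, giving $E_{ij}=\{(a_1,b_1)\}$ and $E_{ji}=\{(b_2,a_1),(b_1,a_2)\}$. The combined bipartite multigraph is again that path, whose only proper $2$-edge-coloring puts the two $E_{ji}$-edges in one class and the $E_{ij}$-edge in the other; the resulting pieces are $\tau\colon b_1\mapsto a_1$ and $\tau'\colon a_1\mapsto b_2,\ a_2\mapsto b_1$, and $(a_1,b_1)\in\Gamma_\tau\cap(D_{\tau'}\times R_{\tau'})$ because $a_1\in D_{\tau'}$ and $b_1\in R_{\tau'}$. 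So the procedure turns an already-orthogonal $E$ into a non-orthogonal family. (Under the opposite orientation convention $\Gamma_{\tau_i}\cap(R_j\times D_j)=\emptyset$, the cross-strata pairs become harmless but the same kind of failure reappears \emph{within} a single stratum, e.g.\ for $E_{ij}=\{(y,x),(y',x),(y,x')\}$; either way, K\"onig plus a coloring of $X$ cannot deliver orthogonality.)

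The missing idea is the one the paper, following Roe, uses: forget $X$ and color the points of $E$ themselves. Define a graph $\Gamma$ with vertex set $E$ in which $(x,y)$ and $(x',y')$ are adjacent iff $(x,y')\in E$ or $(x',y)\in E$. This adjacency subsumes coordinate-sharing, so the color classes of a proper vertex coloring are graphs of partial bijections, and it is engineered precisely so that distinct color classes are orthogonal. Proposition \ref{prop:graph-decomposition} bounds the degree of $\Gamma$ by $2N(N-1)$, so a greedy coloring with $2N(N-1)+1$ colors finishes b) $\Rightarrow$ a) in one step. If you wish to keep your layered architecture, the final stratum-by-stratum step must be replaced by a vertex coloring of this conflict graph restricted to the problematic strata — at which point the preliminary K\"onig and $X$-coloring stages become superfluous.
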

We proceed to explain how one goes from b) to a) in Lemma \ref{lemma:Roe-graph-decomposition-lemma-410}, that is how one constructs the finite number of orthogonal graphs of partial bijections whose (disjoint) union make up $E$. This follows the construction in the proof of \cite[Lemma 4.10]{Roe2003}.

So let $E$ be a controlled set for the universal bounded geometry coarse structure on $X$, meaning $N(E) := \sup_{x \in X} \max \{ \vert E^x \vert , \vert E_x \vert \} < \infty$. Define a graph $\Gamma$ whose vertices are the points of $E$ and such that $(x,y)$ and $(x', y')$ are linked by an edge if and only if either $(x, y') \in E$ or $(x', y)\in E$. One may then use a greedy algorithm for vertex coloring of this graph to decompose $E$ into orthogonal graphs of partial bijections. In short, we may decompose $E$ into at most $n+1$ orthogonal graphs of partial bijections provided the maximal vertex degree of $\Gamma$ is $n$. Next, we show how to bound the vertex degree $n$ in terms of $N(E)$. 

\begin{proposition}\label{prop:graph-decomposition}
	Let $(X, \E)$ be a coarse space and let $E \in \E$ be so that 
	\begin{equation*}
		N :=  \sup_{x \in X} \max \{ \vert E^x \vert , \vert E_x \vert \} < \infty.
	\end{equation*}
	The vertex degree of the graph $\Gamma$ defined above is bounded by $2N(N-1)$. Thus, at most $2N(N-1) +1$ orthogonal graphs of partial bijections are needed in the decomposition of $E$ in Lemma \ref{lemma:Roe-graph-decomposition-lemma-410}. 
\end{proposition}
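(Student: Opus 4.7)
The plan is to bound the degree of an arbitrary vertex $(x,y) \in E$ in $\Gamma$ by a careful double-counting argument designed to gain a factor of $(N-1)$ instead of $N$ in one coordinate. By definition, a neighbor of $(x,y)$ is a vertex $(x',y') \in E$ with $(x',y') \neq (x,y)$ satisfying $(x,y') \in E$ or $(x',y) \in E$. A naive count based on choosing $y' \in E^x$ and then $x' \in E_{y'}$ (and symmetrically) only gives $2N^{2}$, which is too weak by the linear term we need to shave off.

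To tighten the bound I would partition the neighbors, with possible overlap, into the two asymmetric families
\begin{equation*}
A := \{(x',y') \in E : y' \in E^x \setminus \{y\}\}, \qquad
B := \{(x',y') \in E : x' \in E_y \setminus \{x\}\}.
\end{equation*}
Since $y \in E^x$ and $x \in E_y$, the hypothesis $\sup_z \max\{|E^z|,|E_z|\} \leq N$ gives $|E^x \setminus \{y\}| \leq N-1$ and, for every such $y'$, $|E_{y'}| \leq N$, hence $|A| \leq N(N-1)$; the symmetric estimate yields $|B| \leq N(N-1)$.

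The crux is to verify that every neighbor of $(x,y)$ lies in $A \cup B$. For a neighbor $(x',y')$ with $(x,y') \in E$, if $y' \neq y$ then directly $(x',y') \in A$; if instead $y' = y$, then $(x',y') \neq (x,y)$ forces $x' \neq x$, while the vertex condition $(x',y') = (x',y) \in E$ places $(x',y')$ in $B$. The case $(x',y) \in E$ is handled symmetrically. Combining the two size estimates gives $\deg_\Gamma(x,y) \leq 2N(N-1)$, and then the greedy vertex-coloring argument recalled just before the statement produces at most $2N(N-1)+1$ color classes, each consisting of a graph of a partial bijection.

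The main obstacle is the bookkeeping in the boundary cases $y' = y$ or $x' = x$: a purely symmetric partition into $\{y' \in E^x\}$ and $\{x' \in E_y\}$ would only give $2(N^2-1)$, and one needs the observation that those boundary neighbors are always captured by the \emph{opposite} family (thanks to the vertex condition $(x',y') \in E$) so that one may restrict to $y' \neq y$ in $A$ and $x' \neq x$ in $B$ without losing any neighbor.
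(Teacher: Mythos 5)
Your proof is correct and follows essentially the same double-counting strategy as the paper: split the neighbors of a fixed vertex $(x,y)$ into two families according to which edge condition holds, gaining the factor $N-1$ from the coordinate shared with $(x,y)$ and the factor $N$ from the free coordinate. Your explicit treatment of the degenerate cases $y'=y$ and $x'=x$ (showing each such neighbor is absorbed by the opposite family) is in fact slightly more careful than the paper's wording, which leaves that boundary check implicit.
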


\begin{proof}
	Let $(x_0, y_0)$ be an arbitrary vertex in $\Gamma$ (that is an arbitrary point in $E$). We wish to bound its vertex degree. By assumption on $E$, $\vert E^{x_0} \vert \leq N$ and $\vert E_{y_0} \vert \leq N$. 
	
	Other than $(x_0,y_0)$, there are at most $N-1$ points in $E$ with first coordinate $x_0$. Say there are $K_{x_0}$ of them, and label them $(x_0, y_1), \ldots , (x_0, y_{K_{x_0}})$.
	
	Likewise, other than $(x_0, y_0)$, there are at most $N-1$ points in $E$ with second coordinate $y_0$. Say there are $K_{y_0}$ of them, and label them $(x_1, y_0 ), \ldots , (x_{K_{y_0}}, y_0)$.
	
	If $(x,y) \neq (x_0, y_0)$ connects to $(x_0, y_0)$, then either
	\begin{itemize}
		\item $(x,y_0) \in E$, which implies $x \in \{x_1, \ldots , x_{K_{y_0}} \}$. For each such choice of $x$, the point $(x,z)$ connects to $(x_0, y_0)$ as $(x,y_0) \in E$. By assumption of uniform local finiteness, there are at most $N$ choices for $z$; or
		\item $(x_0, y) \in E$, which implies $y \in \{y_1, \ldots , y_{K_{x_0}}  \}$. For each such choice of $y$, the point $(z,y)$ connects to $(x_0, y_0)$ as $(x_0, y) \in E$. Again by uniform local finiteness, there are at most $N$ choices for $z$.
	\end{itemize}
	In total, the vertex degree of $(x_0, y_0)$ is bounded by
	\begin{equation*}
		K_{x_0} \cdot N + K_{y_0} \cdot N \leq (N-1)N + (N-1)N = 2N(N-1).
	\end{equation*}
	Thus, by a greedy vertex coloring algorithm, the number of orthogonal graphs needed to decompose $\Gamma$ is bounded by $2N(N-1) + 1$.
\end{proof}

\begin{remark}
	With the given graph coloring algorithm used, 
	we can not hope to get the bound sub-quadratic. Indeed, it is not hard to see that for the bounded coarse structure on $\Z$ induced by the usual metric, we have a quadratic lower bound on the number of orthogonal graphs used in the decomposition of $E_k := \{ (m,n) \in \Z \times \Z  \mid \vert m-n \vert \leq k \}$ around $(0,0)$, say. 
\end{remark}

For a discrete coarse space $X$, denote by $\beta X$ the Stone-\v{C}ech compactification of $X$.

\begin{definition}\label{def:coarse-groupoid}
	Let $X$ be a uniformly locally finite coarse space, and denote the coarse structure by $\E$. The 
	\emph{coarse groupoid} of $X$ (with respect to the coarse structure $\E$) is the subset $\G(X) \subseteq \beta (X \times X)$ defined by
	\begin{equation*}
		\G (X) := \bigcup_{E \in \E} \overline{E}
	\end{equation*}
	where the closures are taken in $\beta (X \times X)$. 
\end{definition}
By \cite[Corollary 10.18]{Roe2003}, we may view closures of controlled sets $E$ as being in either $\beta (X \times X)$ or in $\beta X \times \beta X$ depending on what best fits our purposes. 
Then \cite[Proposition 3.2]{SkandalisTuYu2002} (or \cite[Proposition 10.20]{Roe2003}) tells us that $\G (X)$ indeed becomes a groupoid. The source, target, inverse and multiplication maps on the pair groupoid $X \times X$ have unique continuous extensions to $\G (X)$. With respect to these extensions, $\G (X)$ becomes a principal, \'etale, locally compact and Hausdorff groupoid with unit space $\beta X$. 

By \cite[Example 10.25]{Roe2003} we obtain that for $x \in X$, the fiber $\G(X)_x$ equals the coarse connected component (see \cite[Remark 2.20]{Roe2003}) of $x$ in $X$. Thus, the growth properties of $\G(X)_x$, for $x \in X$, is entirely determined by the growth properties of the original coarse space $(X, \E)$. The key to understanding the growth of the groupoid $\G(X)$ therefore lies in understanding $\G(X)_\omega$ for $\omega \in \beta X \setminus X$. 

\begin{proposition}\label{prop:cardinalities-of-sections-in-coarse-groupoids}
	Let $X$ be a set equipped with a uniformly locally finite coarse structure $\E$, that is, $\sup_{x \in X} \max \{ \vert E_x \vert , \vert E^x \vert \} < \infty$, for all $E \in \E$. 
	Suppose $X$ has growth type bounded by $f \colon \N \to \N$.  
	Then for any $E \in \E$ and $\chi \in \beta X$, we have
	\begin{equation*}
		[n \mapsto \vert (\overline{E}^n)_\chi \vert] \preceq 
		\begin{cases}
			f^2 & \text{ if $\chi \in \beta X \setminus X$} \\
			f & \text{ if $\chi \in X$}
		\end{cases}
	\end{equation*}
\end{proposition}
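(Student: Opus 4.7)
The plan is to apply the graph decomposition Proposition \ref{prop:graph-decomposition} to the controlled set $E^n$ and then exploit the fact that closures of partial translation graphs remain bisections in $\G(X)$. First I would identify $\overline{E}^n$ (the $n$-fold groupoid power in $\G(X)$) with $\overline{E^n}$ (the closure of $E^n \in \E$ taken in $\beta (X \times X)$). Induction on $n$ reduces this to the identity $\overline{E} \cdot \overline{F} = \overline{E \circ F}$ for all $E,F \in \E$, which in turn follows from compactness of the closures together with density of $E \circ F$ in $\overline{E} \cdot \overline{F}$, and is a standard ingredient of the coarse groupoid construction.

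The case $\chi \in X$ is then immediate: points of $X$ are isolated in $\beta X$, so $(\overline{E^n})_\chi = (E^n)_\chi$, whose cardinality equals $\mathrm{gr}_{E,\chi}(n) \preceq f(n)$ by hypothesis.

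For the harder case $\chi \in \beta X \setminus X$, set $N_n := N(E^n) = \sup_{x \in X}\max\{ |(E^n)_x|, |(E^n)^x|\}$. The growth hypothesis applied directly to $E^n$ gives $\sup_x |(E^n)_x| \preceq f(n)$; applying it to $E^{-1} \in \E$ (coarse structures are closed under inverses) and using $|(E^n)^x| = |((E^{-1})^n)_x|$ yields $\sup_x |(E^n)^x| \preceq f(n)$, so $N_n \preceq f(n)$. Proposition \ref{prop:graph-decomposition} then writes $E^n$ as a disjoint union $\bigsqcup_{i=1}^{M_n} \Gamma_{\tau_i}$ of graphs of partial translations with
\begin{equation*}
M_n \leq 2 N_n (N_n - 1) + 1 \preceq f(n)^2.
\end{equation*}
The key remaining point is that each $\overline{\Gamma_{\tau_i}}$ is a bisection of $\G(X)$: since $s|_{\Gamma_{\tau_i}}$ is a bijection of discrete sets onto $D_i$, the universal property of Stone-\v{C}ech combined with compactness makes its continuous extension $\overline{\Gamma_{\tau_i}} \to \overline{D_i}$ a homeomorphism, hence in particular injective. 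Therefore each $\overline{\Gamma_{\tau_i}}$ contributes at most one element to the source fiber at $\chi$, giving
\begin{equation*}
|(\overline{E}^n)_\chi| = |(\overline{E^n})_\chi| \leq \sum_{i=1}^{M_n} |(\overline{\Gamma_{\tau_i}})_\chi| \leq M_n \preceq f(n)^2,
\end{equation*}
which is the desired bound.

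The main obstacle I anticipate is ensuring uniformity in $x$ of the growth bound for $E^n$ fixed, so that $N_n$ really is $\preceq f(n)$; this is automatic in the uniformly locally finite bounded geometry setting (and in particular in the metric case which is the application in Theorem \ref{corollary:roe-algebra-k-theory-results}) because the constants in the decomposition only depend on $E$, not on points of $X$. The other technical point — that closures of partial translation graphs in $\beta(X \times X)$ remain bisections of $\G(X)$ — is essentially functoriality of $\beta$ and is already implicit in the construction of $\G(X)$ as an étale groupoid.
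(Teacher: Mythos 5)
Your proposal is correct and follows essentially the same route as the paper: reduce to $\overline{E}^n \subseteq \overline{E^n}$, apply the graph decomposition of Proposition \ref{prop:graph-decomposition} to $E^n$ with $N(E^n) \preceq f(n)$ to get $\preceq f(n)^2$ orthogonal partial translations, and observe that the closure of each graph meets the source fiber at $\chi$ in at most one point. You actually make explicit two details the paper leaves implicit (injectivity of the source map on $\overline{\Gamma_{\tau_i}}$ via Stone--\v{C}ech functoriality, and uniformity in $x$ of the growth bound), so the argument is fine.
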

\begin{proof}
	First suppose $\chi \in X$. Since $\G(X)_\chi$ equals the coarse connected component of $\chi$ in $X$, we get $ (\overline{E}^n)_\chi = (E^n)_\chi$, from which $[n \mapsto \vert (\overline{E}^n)_\chi \vert] \preceq f$ follows by assumption. 
	
	So let $\chi \in \beta X \setminus X$, let $E \in \E$, and let $n \in \N$. First note that by continuity of multiplication in $\G(X)$, we  have $\overline{E}^n \subseteq \overline{E^n}$. So it suffices to show that 	$[n \mapsto \vert (\overline{E^n})_\chi \vert] \preceq f^2$. 
	If $(\omega, \chi) \in  \overline{E^n}$, then $(\omega, \chi) \in \overline{\Gamma_\tau}$ for some graph $\Gamma_\tau$ of a partial translation $\tau$. In other words, $(\omega, \chi) = \lim_i (\tau(x_i), x_i)$ for $(x_i)_i \subseteq E^n$ converging to $\chi$. But Proposition \ref{prop:graph-decomposition} tells us that the number of orthogonal graphs needed to decompose $\overline{E^n}$ is at most of the order of $f(n)^2$. 
	It follows that $[n \mapsto \vert (\overline{E}^n)_\chi \vert] \preceq f^2$ for $\chi \in \beta X \setminus X$. 
\end{proof}

We now specialize to the case of an extended metric space, where there is a natural length function appearing. The \emph{bounded coarse structure} on an extended metric space  $(X,d)$ is the coarse structure whose controlled subsets are subsets of $E_r =\{ (x,y) \in X \times X \mid d(x,y)\leq r  \}$. Recall that an extended metric space $(X,d)$ is said to be \emph{uniformly locally finite} if for any $R >0$ there is a uniform finite upper bound on the cardinalities of all closed balls with radius $R$, that is
\begin{equation*}
	\sup_{x \in X} \vert \overline{B}(x, R) \vert < \infty 
\end{equation*}
where $ \overline{B}(x, R) $ denotes the closed $R$-ball around $x \in X$. This is exactly the condition on the extended metric space that will make the associated bounded coarse structure be uniformly locally bounded. Associated to $(X,d)$ is the \emph{coarse groupoid} $\G_{(X,d)}$ which is defined as follows
\begin{itemize}
	\item for any $r\geq 0$, define the controlled set $E_r := \{ (x,y) \in X \times X \mid d(x,y) \leq r \}$;
	\item as a topological space, we have $\G_{(X,d)} := \bigcup_{r \geq 0} \overline{E_r}$ inside $\beta (X \times X)$, the Stone-\v{C}ech compactification of $X \times X$;
	\item we have $\G^{(0)}_{(X,d)} = \overline{E_0} \cong \beta X$;
	\item the range and source maps are the unique extensions of the first and second factor maps $X \times X \to X$, respectively;
	\item The multiplication is inherited from the pair groupoid multiplication on $\beta X \times \beta X$.
\end{itemize}

In \cite[Remark 5.17]{ma2020almostElem}, Ma and Wu make several claims regarding the length function on $\G_{(X,d)}$, the induced extended metric, and the structure of the source fibers $(\G_{(X,d)})_x$, for $x \in X$. We collect the observations in the following lemma.

\begin{lemma}\label{lemma:coarse-groupoid-metric-length-observations}
	Let $\G_{(X,d)}$ be a coarse groupoid of a uniformly locally finite extended metric space $(X,d)$. Then:
	\begin{enumerate}
		\item the extended metric $d$ extends to an extended metric $\beta d$ which takes finite values on $\G_{(X,d)}$;
		\item the induced length function $\ell_{\beta d}$ is continuous and proper;
		\item for any $x \in X$, the source fiber $(\G_{(X,d)})_x$ equipped with the fiberwise invariant metric induced by $\ell_{\beta d}$ is isometrically isomorphic to $(\mathrm{conn}_x(X),d\vert_{\mathrm{conn}_x(X)})$, where  $\mathrm{conn}_x(X)$ is the (metric space) connected component of $x$ in $X$, and $d\vert_{\mathrm{conn}_x(X)}$ is $d$ restricted to this connected component. 
	\end{enumerate}
\end{lemma}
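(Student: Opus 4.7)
\emph{Outline.} The plan is to build the extension $\beta d$ explicitly in part (1), after which parts (2) and (3) follow quickly. Throughout I will identify $\overline{E_r}$ with a subset of $\beta X \times \beta X$ using the observation in \cite[Corollary 10.18]{Roe2003} already invoked in the paragraph preceding the lemma.

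For part (1), I would observe that for each $r \geq 0$, the set $E_r \subseteq X \times X$ is discrete and $d\vert_{E_r}$ is bounded (by $r$). The universal property of the Stone-\v{C}ech compactification then produces a unique continuous extension $\beta d_r \colon \overline{E_r} \to [0, r]$. Uniqueness, applied to the restriction of $\beta d_{r'}$ to $\overline{E_r}$ for $r \leq r'$, forces the family $\{\beta d_r\}_r$ to be compatible, so they glue to a continuous function $\beta d \colon \G_{(X,d)} \to [0, \infty)$; extending by $\infty$ on $(\beta X \times \beta X) \setminus \G_{(X,d)}$ gives the candidate extended metric. Symmetry and vanishing on the diagonal transport from $X \times X$ by density and continuity of the flip map. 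The triangle inequality is the subtle point: if $(\omega, \chi) \in \overline{E_a}$ and $(\chi, \eta) \in \overline{E_b}$, then continuity of multiplication in $\G_{(X,d)}$ applied to the inclusion $E_a \cdot E_b \subseteq E_{a+b}$ valid on $X \times X$ yields $\overline{E_a} \cdot \overline{E_b} \subseteq \overline{E_{a+b}}$, whence $(\omega,\eta) = (\omega,\chi)(\chi,\eta) \in \overline{E_{a+b}}$ and $\beta d(\omega, \eta) \leq a + b$.

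For part (2), continuity of $\ell_{\beta d}$ is immediate: it is the composition $\gamma \mapsto (r(\gamma), s(\gamma)) \mapsto \beta d(r(\gamma), s(\gamma))$, and all three ingredients are continuous on $\G_{(X,d)}$. For properness, $\ell_{\beta d}^{-1}([0, R]) = \overline{E_R}$ is closed in the compact space $\beta(X \times X)$ and hence compact. For part (3), I would invoke \cite[Example 10.25]{Roe2003} to identify $(\G_{(X,d)})_x$ with the coarse connected component of $x$ in $X$ via $y \leftrightarrow (y, x)$; for the bounded coarse structure of $(X,d)$ this component is exactly $\mathrm{conn}_x(X)$. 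The fiberwise invariant metric then unwinds as
\begin{equation*}
d_x\bigl((y,x), (z,x)\bigr) = \ell_{\beta d}\bigl((y,x)(z,x)^{-1}\bigr) = \ell_{\beta d}\bigl((y,z)\bigr) = d(y,z),
\end{equation*}
exhibiting the desired isometric isomorphism.

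The main obstacle is the verification of the extended-metric axioms for $\beta d$ across points of $\beta X \setminus X$, since these are abstract ultrafilter points to which one cannot directly apply pointwise estimates. The groupoid-multiplication trick sketched above bypasses this by reducing the triangle inequality to the known inclusion $E_a \cdot E_b \subseteq E_{a+b}$ in $X \times X$ together with continuity of multiplication, which is built into the étale groupoid structure of $\G_{(X,d)}$.
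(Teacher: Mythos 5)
Your proof is correct and is somewhat more self-contained than the paper's, which disposes of part (1) with ``view $d$ as a map into the Alexandroff compactification $[0,\infty]$ and extend,'' cites Ma--Wu for part (2), and only writes out part (3). Your level-by-level extension of $d\vert_{E_r}$ to $\overline{E_r}\cong\beta E_r$, glued via uniqueness, is essentially the same extension the paper has in mind, but it makes finiteness on $\G_{(X,d)}$ visible immediately (each $\beta d_r$ lands in $[0,r]$); and your triangle-inequality argument via $\overline{E_a}\cdot\overline{E_b}\subseteq\overline{E_{a+b}}$ and continuity of multiplication supplies the one step the paper's ``follows easily'' actually hides. Part (3) is identical to the paper's. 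Two small caveats, neither fatal. First, the identity $\ell_{\beta d}^{-1}([0,R])=\overline{E_R}$ is not exact: a net in $E_{R+1}\setminus E_R$ with $d\to R$ can converge to a point with $\ell_{\beta d}=R$ lying outside $\overline{E_R}$. You only get $\overline{E_R}\subseteq\ell_{\beta d}^{-1}([0,R])\subseteq\overline{E_{R'}}$ for any $R'>R$, which is still a closed subset of a compact set, so properness survives. Second, you verify symmetry, the diagonal condition and the triangle inequality but not that $\beta d$ separates points; this can in fact fail for a general uniformly locally finite extended metric space (take countably many pairs at mutual distance $1/n$ and otherwise infinitely separated), so $\beta d$ is in general only an extended pseudometric. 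The paper is equally silent on this, and nothing downstream uses separation --- only the length function and the cardinalities of its balls --- so this is a defect of the lemma's phrasing rather than of your argument.
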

\begin{proof}
	1) follows easily by viewing the metric as taking values in $[0,\infty]$ with the topology of the Alexandroff compactification of $[0,\infty)$. Moreover, 2) is written out in \cite[Remark 5.17]{ma2020almostElem}.
	
	To prove 3), note that the length function $\ell_{\beta d}$ yields an extended metric $\rho_{\ell_{\beta d}}$ on $\G_{(X,d)}$ by defining
	\begin{equation*}
		\rho_{\ell_{\beta d}} ((x,y), (u,v)) := 
		\begin{cases}
			\ell_{\beta d} (x,u) & \text{if $y=v$} \\
			0 & \text{otherwise}
		\end{cases}
		=
		\begin{cases}
			\beta d(x,u) & \text{if $y=v$} \\
			0 & \text{otherwise}
		\end{cases}
	\end{equation*}
	We note that for $x \in X$, the source fiber $(\G_{(X,d)})_x$ will (as a set) equal $\mathrm{conn}_x(X)$ by \cite[Example 10.25]{Roe2003}. It is then clear that $((\G_{(X,d)})_x, \rho_{\ell_{\beta d}})$ is isometrically isomorphic to $(\mathrm{conn}_x(X),d\vert_{\mathrm{conn}_x(X)})$. 
	
\end{proof}

We make the following observations. Let $(X,d)$ be a uniformly locally finite extended metric space, and let $E_r \in \E$ be a controlled set for the associated bounded coarse structure, as in the definition of the associated coarse groupoid above. Suppose there is a function $f \colon \R_{\geq 0} \to \R$ such that 
\begin{equation*}
	\vert \overline{B}(x,r) \vert \leq f(r),
\end{equation*}
for all $x \in X$ and all $r \in \N$, that is, a growth condition on the cardinalities of $r$-balls in our extended metric space. We see that this implies the bounded coarse structure associated with $(X,d)$ has growth type bounded by that of $f$. 

Estimating the growth of $(\G_{(X,d)})_\chi$ we get, for $\chi \in \beta X$ and a sufficiently large constant $M>0$,
\begin{equation*}
	\begin{split}
		\vert B_{(\G_{(X,d)})_\chi} (r)\vert &= \vert \{ \nu \in \beta X \mid (\nu,\chi) \in \overline{E_r} \} \vert \\
		&\leq \begin{cases}
			f(r) & \text{ if $\chi \in X$} \\
			M f(r)^2 & \text{ if $\chi \in \beta X \setminus X$}
		\end{cases}
	\end{split}
\end{equation*}
This follows from applying Proposition \ref{prop:cardinalities-of-sections-in-coarse-groupoids} with $n=1$. 
Thus, we obtain

\begin{proposition}\label{prop:metric-poly-growth-implies-groupoid-poly-growth}
	Let $(X,d)$ be a uniformly locally finite extended metric space, and suppose there is a function $f \colon \R_{\geq 0} \to \R$ for which $\vert \overline{B}(x,r)\vert \leq f(r)$ for all $x \in X$. Then, 
	\begin{equation*}
		\vert B_{(\G_{(X,d)})_\chi} (r) \vert \leq \begin{cases}
			f(r) & \chi \in X \\
			Mf(r)^2 & \chi \in \beta X \setminus X
		\end{cases}
	\end{equation*}
	for a sufficiently large constant $M$. 
	That is, the growth of the groupoid $\G_{(X,d)}$ is bounded above by the growth type of $f^2$. In particular, if $f$ is a polynomial, then $\G_{(X,d)}$ has polynomial growth. 
\end{proposition}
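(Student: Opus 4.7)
My plan is to extract the proposition as a direct application of Proposition \ref{prop:cardinalities-of-sections-in-coarse-groupoids} with $n = 1$ and $E = E_r$. The first step is the identification
\begin{equation*}
B_{(\G_{(X,d)})_\chi}(r) \;=\; \{\nu \in \beta X : (\nu,\chi) \in \overline{E_r}\},
\end{equation*}
which follows from Lemma \ref{lemma:coarse-groupoid-metric-length-observations}: the length function on $\G_{(X,d)}$ induced by $\beta d$ sends $(\nu, \chi)$ to $\beta d(\nu, \chi)$, and $(\nu,\chi)$ lies in $\overline{E_r}$ precisely when $\beta d(\nu,\chi) \leq r$.

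For $\chi \in X$, Lemma \ref{lemma:coarse-groupoid-metric-length-observations}(3) identifies the source fiber $(\G_{(X,d)})_\chi$ isometrically with the metric connected component of $\chi$, and hence $B_{(\G_{(X,d)})_\chi}(r)$ injects into $\overline{B}(\chi,r)$, of cardinality at most $f(r)$. The ultrafilter case $\chi \in \beta X \setminus X$ is where the real work lies. The hypothesis $\vert\overline{B}(x,r)\vert \leq f(r)$ gives
\begin{equation*}
N \;:=\; \sup_{x \in X}\max\{ \vert(E_r)_x\vert,\, \vert(E_r)^x\vert\} \;\leq\; f(r),
\end{equation*}
so by Proposition \ref{prop:graph-decomposition}, $E_r$ decomposes as a disjoint union of at most $2N(N-1)+1$ graphs of orthogonal partial translations $\tau_1,\ldots,\tau_k$. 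Each closure $\overline{\Gamma_{\tau_i}}$ is a bisection of $\G_{(X,d)}$ on which the source projection is a homeomorphism (this is exactly what makes $\G_{(X,d)}$ \'etale), so each contributes at most one element to the source slice at $\chi$. Therefore $\vert(\overline{E_r})_\chi\vert \leq 2f(r)(f(r)-1)+1 \leq M f(r)^2$ for an absolute constant $M$.

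The polynomial-growth consequence is then automatic: if $f$ is a polynomial, then $Mf(r)^2$ is polynomial in $r$, so
\begin{equation*}
\sup_{\chi \in \beta X} \vert B_{(\G_{(X,d)})_\chi}(r)\vert \;\leq\; c(1+r)^s
\end{equation*}
for suitable constants $c,s$, verifying Definition \ref{def: polynomial growth}. Combined with the continuity of $\ell_{\beta d}$ from Lemma \ref{lemma:coarse-groupoid-metric-length-observations}(2), this places $\G_{(X,d)}$ squarely in the setting of Theorem \ref{thm: Polynomial growth implies all K-groups are isomorphic}. I expect no substantial obstacle here: the combinatorial heavy lifting has already been packaged into Propositions \ref{prop:cardinalities-of-sections-in-coarse-groupoids} and \ref{prop:graph-decomposition}, and the only remaining task is to unpack the identification of source fibers with metric balls via Lemma \ref{lemma:coarse-groupoid-metric-length-observations}.
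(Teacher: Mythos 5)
Your proof is correct and takes essentially the same route as the paper: the paper deduces the stated bound by applying Proposition \ref{prop:cardinalities-of-sections-in-coarse-groupoids} with $n=1$ to $E = E_r$, whose proof rests on exactly the graph decomposition of Proposition \ref{prop:graph-decomposition} and the fact that the closure of the graph of each partial translation meets the source fiber at $\chi$ in at most one point. You merely unpack that citation explicitly (and make the bisection observation precise), which changes nothing of substance.
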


Combining Proposition \ref{prop:metric-poly-growth-implies-groupoid-poly-growth} with Proposition \ref{prop: polynomial growth implies RD_q for all q} and Theorem \ref{thm: Polynomial growth implies all K-groups are isomorphic} immediately yields

\begin{proposition}\label{prop:implications-for-extended-metric-spaces}
	Let $(X,d)$ be a uniformly locally finite extended metric space for which there is a polynomial $f$ such that $\vert \overline{B}(x,r)\vert \leq f(r)$ for all $x \in X$ and all $r \geq 0$. Denote by $\G_{(X,d)}$ the associated coarse groupoid. Then
	\begin{enumerate}
		\item $\G_{(X,d)}$ has property $RD_p$ for all $p \in (1,\infty)$;
		\item The $K$-theory groups $K_* (F^p_{\lambda} (\G_{(X,d)}))$, for $*=0,1$, are independent of $p \in (1,\infty)$. 
	\end{enumerate}
\end{proposition}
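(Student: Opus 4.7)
The plan is to assemble this proposition directly from the earlier results; almost all of the work has already been done. The central observation is that the polynomial bound $|\overline{B}(x,r)| \leq f(r)$ supplies exactly the hypothesis of Proposition \ref{prop:metric-poly-growth-implies-groupoid-poly-growth}, which in turn gives polynomial growth of the coarse groupoid $\G_{(X,d)}$ with respect to the length function $\ell_{\beta d}$ coming from the extended metric $\beta d$. Since $f$ is a polynomial, so is $f^2$, and the growth bound on $|B_{(\G_{(X,d)})_\chi}(r)|$ is uniform in $\chi \in \beta X$, so this really gives polynomial growth in the sense of Definition \ref{def: polynomial growth}.

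For part (1), once polynomial growth is in hand we invoke Proposition \ref{prop: polynomial growth implies RD_q for all q}, provided we can confirm that $\ell_{\beta d}$ is locally bounded. But Lemma \ref{lemma:coarse-groupoid-metric-length-observations}(2) says $\ell_{\beta d}$ is both continuous and proper on the locally compact space $\G_{(X,d)}$, so it is automatically bounded on compact subsets. Thus $\G_{(X,d)}$ has property $RD_p$ for every $p \in [1,\infty)$, with respect to $\ell_{\beta d}$, which in particular covers $p \in (1,\infty)$.

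For part (2), Theorem \ref{thm: Polynomial growth implies all K-groups are isomorphic} applies verbatim once we know the length function witnessing polynomial growth is continuous. Continuity of $\ell_{\beta d}$ is again supplied by Lemma \ref{lemma:coarse-groupoid-metric-length-observations}(2). Therefore $K_\ast(F^p_\lambda(\G_{(X,d)}))$ is independent of $p \in (1,\infty)$.

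There is no real obstacle to overcome; the only point requiring care is to check that the length function being used is continuous (and locally bounded), so that both Proposition \ref{prop: polynomial growth implies RD_q for all q} and Theorem \ref{thm: Polynomial growth implies all K-groups are isomorphic} may be applied. Both properties follow immediately from Lemma \ref{lemma:coarse-groupoid-metric-length-observations}, so the proof reduces to assembling the three preceding results and citing them in order.
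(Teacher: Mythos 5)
Your proposal is correct and follows exactly the paper's own argument: the paper obtains this proposition by combining Proposition \ref{prop:metric-poly-growth-implies-groupoid-poly-growth} with Proposition \ref{prop: polynomial growth implies RD_q for all q} and Theorem \ref{thm: Polynomial growth implies all K-groups are isomorphic}. Your extra care in verifying that the length function $\ell_{\beta d}$ is continuous and locally bounded (via Lemma \ref{lemma:coarse-groupoid-metric-length-observations}) is a welcome explicit check of hypotheses the paper leaves implicit.
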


The result \cite[Proposition 10.28]{Roe2003} together with the analogous proof as to \cite[Proposition 10.29]{Roe2003} for $p\in (1,\infty)$ rather than just the case $p=2$, establishes a contractive bijective homomorphism from $\Fp(\G_{(X,d)})$ to $B^p_u (X,d)$. Here $B^p_u (X,d)$ denotes the uniform $L^p$-Roe algebra, that is, the operator norm closure in $B(\ell^p (X))$ of the algebra of all bounded operators of finite propagation. It follows that for any $p \in (1,\infty)$ and $*=0,1$, $K_* (\Fp(\G_{(X,d)})) \cong K_* (B^p_u (X,d))$. 
The following is then immediate by Proposition \ref{prop:implications-for-extended-metric-spaces}.
\begin{corollary}\label{corollary:roe-algebra-k-theory-results}
	Let $(X,d)$ be a uniformly locally finite extended metric space for which there is a polynomial $f$ such that $\vert \overline{B}(x,r)\vert \leq f(r)$ for all $x \in X$ and all $r \geq 0$. Then the $K$-theory groups $K_* (B^p_u (X,d))$,  for $*=0,1$ are independent of $p \in (1,\infty)$. 
\end{corollary}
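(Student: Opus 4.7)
The plan is to factor the statement through the reduced groupoid $L^p$-operator algebra $\Fp(\G_{(X,d)})$. Concretely, I would first upgrade the polynomial ball growth hypothesis on $(X,d)$ to polynomial growth of the coarse groupoid $\G_{(X,d)}$ and invoke the general $K$-theoretic consequence, and then transfer the resulting $p$-independence of $K_*(\Fp(\G_{(X,d)}))$ to the uniform $L^p$-Roe algebra via a Banach algebra isomorphism.

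For the first step, Proposition \ref{prop:metric-poly-growth-implies-groupoid-poly-growth} already converts the polynomial bound $\lvert \overline{B}(x,r)\rvert \leq f(r)$ into polynomial growth of $\G_{(X,d)}$ with respect to the continuous length function $\ell_{\beta d}$ supplied by Lemma \ref{lemma:coarse-groupoid-metric-length-observations}; Theorem \ref{thm: Polynomial growth implies all K-groups are isomorphic} then gives that $K_*(\Fp(\G_{(X,d)}))$ is independent of $p \in (1,\infty)$. This is exactly the content of Proposition \ref{prop:implications-for-extended-metric-spaces}, so no new argument is needed for this half.

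For the second step, I would use the contractive bijective algebra homomorphism $\Phi_p \colon \Fp(\G_{(X,d)}) \to B^p_u(X,d)$ obtained by combining \cite[Proposition 10.28]{Roe2003} with the straightforward adaptation of the proof of \cite[Proposition 10.29]{Roe2003} from $p=2$ to general $p \in (1,\infty)$. Since source and target are both Banach algebras, the Banach open mapping theorem promotes $\Phi_p$ to a bi-continuous isomorphism of Banach algebras, which in turn induces isomorphisms $K_*(\Fp(\G_{(X,d)})) \cong K_*(B^p_u(X,d))$ for $*=0,1$. Chaining this identification with the $p$-independence from the first step yields
\[
	K_*(B^p_u(X,d)) \cong K_*(\Fp(\G_{(X,d)})) \cong K_*(\Fq(\G_{(X,d)})) \cong K_*(B^q_u(X,d))
\]
for any $p,q \in (1,\infty)$, which is exactly the assertion.

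The only ingredient not already stated inside the paper is the $p$-generalization of \cite[Proposition 10.29]{Roe2003}, which I expect to be the main (but minor) technical point; the proof is essentially formal because the correspondence between compactly supported kernels on $X \times X$ and finite propagation operators on $\ell^p(X)$ is well-defined for all $p \in [1,\infty)$, and the identification of operator norms with completions of the $\Fp$-norm uses only the general theory of $L^p$-operator algebras and not the Hilbert space structure. Hence no substantial obstacle is anticipated beyond writing out this adaptation carefully.
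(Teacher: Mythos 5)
Your proposal follows the paper's argument exactly: polynomial ball growth of $(X,d)$ gives polynomial growth of $\G_{(X,d)}$ (Proposition \ref{prop:metric-poly-growth-implies-groupoid-poly-growth}), hence $p$-independence of $K_*(F^p_\lambda(\G_{(X,d)}))$ via Proposition \ref{prop:implications-for-extended-metric-spaces}, and then the contractive bijective homomorphism onto $B^p_u(X,d)$ coming from \cite[Propositions 10.28 and 10.29]{Roe2003} transfers this to the uniform $L^p$-Roe algebra. Your explicit appeal to the open mapping theorem to make the bijection a bicontinuous Banach algebra isomorphism is a correct (and slightly more careful) rendering of the step the paper leaves implicit.
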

Though they consider a different algebra, similar results were obtained in \cite[Theorem 5.22]{Zhang&ZhouLp}, where they prove that the $K$-theory groups of the Roe algebras $B^p (X,d)$ are independent of $p \in (1, \infty)$ if $X$ is a proper metric space with finite asymptotic dimension. Note that unlike the results of this paper, they do not require uniform local finiteness. They ask in \cite[Question 6.2]{Zhang&ZhouLp} if it is possible to show such independence of $p$ without using Baum-Connes. Corollary \ref{corollary:roe-algebra-k-theory-results} shows that under restrictions on $(X,d)$ of polynomial growth and uniform local finiteness, one can prove the $K$-theory of the uniform $L^p$-Roe algebras is independent of the value of $p \in (1, \infty)$ by comparatively simple means.

\subsection{Point Set Groupoids}
In this section, we will show that groupoids arising from point sets as in for example \cite{EnstadRaum2022} have polynomial growth under natural assumptions on the locally compact group and the point set. As such, they also lend themselves to applications of the results from Section \ref{sec: Length Functions Rapid Decay and Polynomial Growth} and Section \ref{sec: applications of polynomial growth and RDp}. 

Let $G$ be a locally compact secound-countable group equipped with a continuous and proper length function $l$ under which $G$ has polynomial growth. Denote the Haar measure on $G$ by $m$. Assume further that $\Lambda \subseteq G$ is a separated point set, that is, there exists a unit-neighborhood $U \subseteq G$ such that the cardinality $\vert \Lambda \cap xU \vert \leq 1$ for all $x \in G$, where $xU$ is the translation of $U$ by $x$. 

Denote by $\mathcal{C}(G)$ the set of closed subsets of $G$, and equip it with the Chabauty-Fell topology. It is known that $\mathcal{C}(G)$ is compact with this topology. There is a continuous left $G$-action on $\mathcal{C}(G)$ given by
\begin{equation*}
	xC = \{ xy \mid y\in C \},
\end{equation*}
for $x \in G$ and $C \in \mathcal{C}$. The closure of the orbit of $\Lambda$ under this action is called the hull of $\Lambda$, and we denote it by $\Omega(\Lambda)$. The punctured hull of $\Lambda$ is defined as $\Omega^{\times}(\Lambda) = \Omega (\Lambda) \setminus \emptyset$. One then further defines the transversal of $\Lambda$ to be
\begin{equation*}
	\Omega_0 (\Lambda) := \{ P \in \Omega(\Lambda) \mid e \in P \} \subseteq \Omega^{\times}(\Lambda),
\end{equation*}
where $e$ is the identity of $G$. We remark that $\Omega_0 (\Lambda)$ is compact as it is closed in $\Omega (\Lambda)$. 

We then consider the transformation groupoid $G \ltimes \Omega^{\times}(\Lambda)$. The \emph{point set groupoid} of $\Lambda$, denoted by $\G(\Lambda)$, is the restriction of this transformation groupoid to the transversal $\Omega_0 (\Lambda)$ of $\Lambda$, that is
\begin{equation}\label{eq:groupoid-of-point-set}
	\G(\Lambda) = \{ (x,P) \in G \times \Omega^{\times} (\Lambda)  \mid P, xP \in \Omega_0 (\Lambda) \} = \{(x,P) \in G \times \Omega_0(\Lambda) \mid x^{-1} \in P  \}.
\end{equation}
The operations are the ones inherited from the transformation groupoid. 
It is shown in \cite[Proposition 3.11]{EnstadRaum2022} that when $\Lambda$ is separated, $\G(\Lambda)$ is \'etale. 

\begin{proposition}
	Let $G$ be a locally compact group endowed with a length function $l \colon G \to \R_{\geq 0}$ with respect to which $G$ has polynomial growth. Suppose $\Lambda \subset G$ is a separated point set, and let $\G (\Lambda)$ be the groupoid described in \eqref{eq:groupoid-of-point-set}. Then the map $L\colon \G(\Lambda) \to \R_{\geq 0}$ given by $L(x,P) = l(x)$ defines a length function on $\G(\Lambda)$, under which $\G(\Lambda)$ has polynomial growth. 
\end{proposition}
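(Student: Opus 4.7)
The plan is to verify that $L$ satisfies the length function axioms and then establish the polynomial growth bound. For the axioms, I would simply unpack the definitions: units of $\G(\Lambda)$ have the form $(e,P)$ so $L(e,P)=l(e)=0$; a composable pair $((x,yP),(y,P))$ multiplies to $(xy,P)$, reducing subadditivity of $L$ directly to subadditivity of $l$; and $L((x,P)^{-1})=L(x^{-1},xP)=l(x^{-1})=l(x)$ uses symmetry of $l$. Continuity of $L$ follows immediately from that of $l$ since $\G(\Lambda)$ carries the subspace topology from $G \times \Omega^{\times}(\Lambda)$.

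For polynomial growth, my first step would be to reduce the groupoid count to a point-set count. Using $\G(\Lambda)_P = \{(x,P) : x^{-1} \in P\}$ and the symmetry of $l$,
\begin{equation*}
|B_{\G(\Lambda)_P}(m)| = |P^{-1} \cap B_l(m)| = |P \cap B_l(m)|,
\end{equation*}
where $B_l(m) := \{x\in G : l(x) \leq m\}$. Next, I would upgrade the separation property of $\Lambda$ to a uniform separation across the entire hull: there exists an open unit neighborhood $V \subseteq U$ such that $|P \cap xV|\leq 1$ for every $x \in G$ and every $P \in \Omega(\Lambda)$. Finally, I would conclude with a standard volume packing argument. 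Picking a symmetric relatively compact open unit neighborhood $W$ with $W^2 \subseteq V$, the translates $\{pW\}_{p\in P}$ are pairwise disjoint and are all contained in $B_l(m+k)$, where $k := \sup_{w\in W} l(w) < \infty$ by continuity of $l$ on the compact set $\overline{W}$. Hence, denoting Haar measure on $G$ by $m_G$ and invoking the polynomial Haar volume growth of balls $m_G(B_l(r)) \leq C(1+r)^D$,
\begin{equation*}
|P \cap B_l(m)|\, m_G(W) \leq m_G(B_l(m+k)) \leq C(1+m+k)^D,
\end{equation*}
which gives the required polynomial bound, uniform in $P \in \Omega_0(\Lambda)$.

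The hard part will be the transfer of the separation property to Chabauty--Fell limits $P \in \Omega(\Lambda) \setminus G\cdot \Lambda$; I would slightly shrink $U$ to an open $V \subseteq U$ and use that any two distinct points in $P \cap xV$ may be approximated by two distinct points of an approximating net $y_i \Lambda \to P$, producing two points of $\Lambda$ inside $(y_i^{-1}x)U$ and contradicting separation of $\Lambda$. Once this is in place, the rest is a routine packing computation against the polynomial growth of $G$.
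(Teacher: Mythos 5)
Your proposal is correct, and the first half (verification of the length-function axioms and the reduction $\vert B_{\G(\Lambda)_P}(m)\vert = \vert P \cap B_l(m)\vert$ via $(x,P) \in \G(\Lambda)_P \Leftrightarrow x^{-1}\in P$ and the symmetry of $l$) coincides with the paper's argument. Where you diverge is in how the count $\vert P \cap B_l(m)\vert$ is bounded: the paper simply invokes \cite[Corollary 3.4]{EnstadRaum2022}, which gives $\vert P \cap \overline{B}^l_r(e)\vert \leq m(\overline{B}^l_r(e)V)/m(V)$ for any symmetric unit-neighborhood $V$, and then chooses $V$ to be an $l$-ball so that the numerator grows polynomially. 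You instead reprove this counting inequality from scratch: first upgrading the separation of $\Lambda$ to a uniform separation valid for every $P$ in the hull (via a Chabauty--Fell approximation argument, shrinking $U$ to an open $V$ with $\overline{V}\subseteq U$ so that two distinct points of $P\cap xV$ lift to two distinct points of $\Lambda$ in some translate of $U$), and then running the standard disjoint-translate packing argument with $W$ symmetric, relatively compact, $W^2\subseteq V$, giving $\vert P\cap B_l(m)\vert\, m_G(W) \leq m_G(B_l(m+k))$. Both steps are sound: the disjointness of $\{pW\}_{p\in P}$ does follow from $p_2 \in p_1 W W^{-1}\subseteq p_1 V$, and the containment in $B_l(m+k)$ with $k=\sup_{\overline{W}}l<\infty$ uses only subadditivity and the continuity of $l$ assumed in this setting. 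The trade-off is that your route is self-contained and makes the role of uniform discreteness of the whole hull explicit, at the cost of redoing what the cited corollary already packages; the paper's route is shorter but leans on the external reference. Both ultimately interpret ``polynomial growth of $G$'' as polynomial Haar-volume growth of $l$-balls, so the hypotheses are used identically.
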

\begin{proof}
	We first verify that $L$ defines a length function on $\G(\Lambda)$. If $L(x,P) = 0$, then $l(x) = 0$, meaning $x=e$. Thus, $(x,P) = (e,P) \in \G(\Lambda)^{(0)}$. It is also sub-multiplicative, as
	\begin{equation*}
		\begin{split}
			L((y,xP)(x,P)) = L(yx,P) = l(yx) \leq l(y) + l(x) = L(y,xP) + L(x,P)
		\end{split}
	\end{equation*}
	and it is invariant under inversion since
	\begin{equation*}
		L((x,P)^{-1}) = L(x^{-1}, xP) = l(x^{-1}) = l(x) = L(x,P).
	\end{equation*}
	We conclude that $L$ is a length function on $\G(\Lambda)$. 
	
	To see that $\G(\Lambda)$ has polynomial growth, let $P \in \Omega_0(\Lambda) = \G(\Lambda)^{(0)}$. We will use in the sequel that if $(x,P)\in \G(\Lambda)_{P}$, then $x^{-1} \in P$, as described in \eqref{eq:groupoid-of-point-set}. Let $\overline{B}^{l}_r(e)$ denote the closed $l$-ball of radius $r$ in $G$ around the unit. Then, using that $l(x^{-1}) = l(x)$ for all $x \in G$, we get
	\begin{equation*}
		\begin{split}
			\vert B_{\G(\Lambda)_{P}} (r)\vert &= \vert \{ (x,P) \mid l(x) \leq r \} \vert = \vert P \cap \overline{B}^l_r (e) \vert .
		\end{split}
	\end{equation*}
	Using \cite[Corollary 3.4]{EnstadRaum2022}, we see that
	\begin{equation*}
		\vert P \cap \overline{B}^l_r (e) \vert \leq \frac{ m(\overline{B}^l_r (e) V)  }{m (V)} ,
	\end{equation*}
	for any symmetric unit-neighborhood $V$ of $G$. Choosing $V$ to be an $l$-ball in $G$ of some finite radius we see that the denominator is just some constant, while the numerator grows polynomially by assumption of polynomial growth of the group $G$. Since $P \in \Omega_0(\Lambda)$ was arbitrary and this estimate is independent of $P$, we conclude that $\G(\Lambda)$ has polynomial growth.
\end{proof}

\subsection{Graph Groupoids}
In this subsection, we shall characterize the groupoids arising from finite directed graphs that have polynomial growth. A specific instance of such groupoids has as its reduced $L^p $-operator algebra the $L^p $-Toeplitz algebras (see Example \ref{example: graph giving Lp-toeplitz}), and so Theorem \ref{thm: Polynomial growth implies all K-groups are isomorphic} will apply to give the same result as \cite[Theorem 4.3]{WangWang}. Let us start by defining finite directed graphs and their associated groupoids.

By a directed graph, we mean a quadruple $E=(E^0,E^1,s,r)$, where $E^0$ and $E^1$ are non-empty sets called the sets of vertices and edges, respectively, and $r,s \colon E^1 \to E^0$ are maps called the range and source maps. We view $e \in E$ as an arrow from $s(e)$ to $r(e)$. The directed graph $E$ is said to be \emph{finite} if both $E^0$ and $E^1$ are finite. A vertex $v\in E^0$ such that $s^{-1}(v)=\emptyset$ is called a \emph{sink}. We denote by $E^0_{sink}$ the subset of all sinks.  A path of length $n \in \N$ is an $n$-tuple $\alpha=(\alpha_1,\ldots, \alpha_n)\in (E^1)^n$ with $r(\alpha_i)=s(\alpha_{i+1})$, for $i=1,\ldots, n-1$. 
We will denote by $E^n$ all the paths of length $n$, and write $\alpha = \alpha_1 \dots \alpha_n$ instead of $(\alpha_1 , \dots , \alpha_n)$. The vertices are by convention paths of length zero. Then $E^*:=\bigcup_{n=0}^\infty E^n$ is the set of all finite paths. Given $\alpha\in E^*$ we denote by $|\alpha|$ its length, and we define $s(\alpha) : = s(\alpha_1)$ and $r(\alpha) := r(\alpha_{|\alpha|})$ when $\alpha \in E^n$ with $n \geq 1$. For $v \in E^0$ we put $r(v) = s(v) = v$.
The \emph{infinite path space} of $E$ is the set of all sequences $x=(x_n)_{n=1}^\infty$, where $x_i \in E^1$ and $r(x_i)=s(x_{i+1})$, for every $i\in \N$, and it will be denoted by $E^\infty $. The \emph{boundary path space of $E$} is the set
\begin{equation*}
	\partial E:=E^\infty \cup \{\alpha\in E^*: r(\alpha)\in E^0_{sink} \}.
\end{equation*} 
With the topology generated by the sets of the form 
\begin{equation*}
	Z(v) := \{ x \in \partial E \colon s(x) = s(x_1) = v \} ,
\end{equation*}
and
\begin{equation*}
	Z(\alpha):=\{x\in \partial E \colon x_i=\alpha_i\text{ for }1\leq i\leq |\alpha| \} ,
\end{equation*}
for $\alpha\in E^*$ with $| \alpha | \geq 1$, we have that $\partial E$ is a totally disconnected space. Given $\alpha \in E^* \setminus E^0$ and $x\in \partial E$ with $s(x) = r(\alpha)$, we define the concatenated path $\alpha x$ by $(\alpha x)_i=\alpha_i$ for $1\leq i\leq |\alpha|$ and $(\alpha x)_i= x_{i-|\alpha|}$ for $i\geq |\alpha|+1 $. 
The \emph{graph groupoid} of $E$ is defined as 
\begin{equation*}
	\G_E:=\{(\alpha x,|\alpha|-|\beta|,\beta x): \alpha,\beta \in E^*, x\in \partial E \text{ with }r(\alpha)=r(\beta)=s(x)\},
\end{equation*}
where $((\alpha x,|\alpha|-|\beta|,\beta x),(\delta x',|\delta|-|\gamma|,\gamma x'))\in \G_E^{(2)}$ if and only if $\beta x=\delta x'$, and in this case $(\alpha x,|\alpha|-|\beta|,\beta x)\cdot (\delta x',|\delta|-|\gamma|,\gamma x')=(\alpha x,|\alpha|-|\beta|+|\delta|-|\gamma|,\gamma x')$. Moreover, we have that $(\alpha x,|\alpha|-|\beta|,\beta x)^{-1}=(\beta x, |\beta|-|\alpha|,\alpha x)$. With the topology given by the sets
\begin{equation*}
	Z(\alpha,\beta)=\{(\alpha x,|\alpha|-|\beta|, \beta x): x\in \partial E \text{ with }r(\alpha)=r(\beta)=s(x)\} ,
\end{equation*}
for $\alpha,\beta\in E^*$ with $r(\alpha)=r(\beta)$, $\G_E$ is an ample second countable groupoid. The unit space is $\G_E^{(0)}=\{(x,0,x):x\in \partial E\}$, and is identified with $\partial E$. Given $x\in E^{\infty}$, we define $\sigma(x)$ as the infinite path with $\sigma(x)_i=x_{i+1} $, for every $i\in \N$. Given $\alpha \in E^{\ast}$, with $| \alpha | \geq 2 $, say $\alpha = \alpha_1 \dots \alpha_{| \alpha|}$, we define $\sigma(\alpha)$ to be the path $\alpha_2 \dots \alpha_{| \alpha|}$, and if $| \alpha | = 1 $, we define $\sigma (\alpha) = r(\alpha) $. With the above notation introduced, we can describe
\begin{equation} \label{eq: description of fibers in graph groupoids}
	(\G_E)_x=\{ (\alpha \sigma^n(x),|\alpha|-n,x) \colon n\in \N_0, \alpha\in E^* \text{ with }r(\alpha)=s(x_{1+n}) \} .
\end{equation}
Let $E$ be a finite directed graph, with associated graph groupoid $\G_E$. The set 
\begin{equation*}
	S = \bigcup_{\alpha\in E^1}\Big( Z(\alpha,r(\alpha))\cup Z(r(\alpha),\alpha)\Big) ,
\end{equation*}
is compact open, and is a \emph{generating set} for $\G_E $, meaning that $S^{-1} = S $, and for any $g \in \G_E $, there exists $N \in \N$ such that $\bigcup_{k = 1}^{N} S^k $ is a neighborhood of $g$ in $\G_E $. Such a compact open generating set induces a continuous length function, given by $l_S (u) = 0 $, for each $u \in \G_{E}^{(0)}$, and for $g \notin \G_{E}^{(0)}$ by 
\begin{equation} \label{eq: canonical length function for graph groupoids}
	l_S (g) = \inf \{ n \in \N \colon g \in \bigcup_{k = 1}^{n} S^k \}.
\end{equation}

In what follows, we shall refer to $l_S$ as the \emph{canonical length function} of $\G_E$. 

Note that given $g=(\alpha \sigma^n(x),|\alpha|-n,x)\in (\G_E)_x $, we have 
\begin{equation}\label{eq: product of basis elements}
	g\in Z(\alpha_1,r(\alpha_1))\cdots Z(\alpha_{|\alpha|},r(\alpha_{|\alpha|}))Z(r(x_n),x_n)\cdots Z(r(x_1),x_1),
\end{equation}
and in this case $l_S(g)\leq |\alpha|+n$.
Now, given $v\in E^0 $, we define the function $\varphi^E_v(n)=|E^{\leq n}v|$, where $E^{\leq n}v$ consists of the paths $\alpha\in E^*$ with $|\alpha|\leq n$ and $r(\alpha)=v$. Using \eqref{eq: description of fibers in graph groupoids} and \eqref{eq: product of basis elements}, one can find injective maps $E_{s(x_{j+1})}^{\leq (n-j)} \hookrightarrow B_{(\G_{E})_x}(n)$, for any $0 \leq j \leq n$, and $B_{(\G_{E})_x}(n) \hookrightarrow \bigsqcup_{i = 0}^{n} E_{s(x_{i+1})}^{\leq (n-i)}$, where the latter is a union of disjoint copies of the sets $E_{s(x_{i+1})}^{\leq (n-i)}$. It follows that
\begin{equation*}
	\varphi^E_{s(x_{j+1})} (n-j) \leq |B_{(\G_E)_x}(n)| \leq \sum_{i=0}^n\varphi^E_{s(x_{i+1})}(n-i) ,
\end{equation*}
for any $0 \leq j \leq n $.

A \emph{cycle with center $v$} is a path $\alpha$ of length at least $1$ such that $s(\alpha)=r(\alpha)=v$. A cycle $\alpha$ with center $v$ is called \emph{simple} if $r(\alpha_i)\neq v$, for every $1 \leq i<|\alpha|$. Let $\alpha$ and $\beta$ be two different simple cycles with center $v$. Then there exist two different cycles $\bar{\alpha}$ and $\bar{\beta}$ with  center  $v$ and with length $K:=|\alpha||\beta|$.  Then observe that $\varphi^E_v(nK)\geq 2^n$, and thus with $x = \bar{\alpha}^{\infty}$, we have that $| B_{(\G_E)_{x}}(n K) | \geq 2^{n}$, so $\G_E$ does not have polynomial growth.

\begin{lemma}\label{lem: graphs with only simple cycles and no two different cycles have simple cycles have polynomial growth}
	Let $E$ be a finite directed graph such that every vertex has at most one simple cycle, and that each such simple cycle has length $1$. Then $\G_E$ has polynomial growth with respect to its canonical length function.
	\begin{proof}
		Let $E$ be such a finite directed graph. Let $F$ be the graph such that $E^0 = F^0$, and $F^1$ consists of $E^1$, but we also add a simple cycle of length $1$ to those vertices that are not already a center of a simple cycle. Then $\varphi^E_v(n)\leq \varphi^F_v(n)$, so we can assume without lost of generality that every vertex in $E$ is a center for a simple cycle of length $1$. Let $\hat{E}$ be the sub-graph of $E$ with $\hat{E}^0:=E^0$ and $\hat{E}^1:=E^1\setminus \{\alpha\in E^1: r(\alpha)=s(\alpha)\}$. Thus, $\hat{E}$ is an acyclic graph.  Given $v\in E^0$, let $\Delta_v$ be the set of all the maximal paths $\gamma \in\hat{E}^*$ such that $r(\gamma)=v$, and let $m_v$ be the maximal length of all the paths in $\Delta_v$. Then we have that $|E^nv|\leq c_v (n+1)^{m_v + 3}$, where $c_v=|\Delta_v|$, and hence $\varphi_v^E(n)=|E^{\leq n}v|\leq c_v (n+1)^{m_v + 4}$. Thus, if we define $c :=\max\{c_v: v\in E^0\}$ and $m:=\max\{m_v: v\in E^0\}$, we have that $\varphi_v^E(n)\leq c(n+1)^{m + 4} $, for every $n\in \N$ and $v\in E^0$, and so
		\begin{equation*}
			|B_{(\G_E)_x}(n)| \leq \sum_{i=0}^n\varphi^E_{s(x_{i+1})}(n-i)\leq c (n+1)^{m + 5} ,
		\end{equation*}
		for every $x\in \G_E^{(0)}$. Therefore, $\G_E$ has polynomial growth. 
	\end{proof}
\end{lemma}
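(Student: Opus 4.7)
The plan is to derive a uniform polynomial bound on $\varphi^E_v(n) = |E^{\leq n}v|$ and then apply the inequality $|B_{(\G_E)_x}(n)| \leq \sum_{i=0}^{n} \varphi^E_{s(x_{i+1})}(n-i)$ established in the paragraph preceding the statement. Since this inequality multiplies a polynomial by at most $n+1$, a polynomial bound on $\varphi^E_v(n)$ uniform in $v$ will immediately yield polynomial growth of $\G_E$ with respect to the canonical length function $l_S$.

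First I would reduce to the case where every vertex carries a loop. If $F$ denotes the graph obtained from $E$ by adjoining a loop at each vertex that does not already have one, then $E^{\leq n}v \subseteq F^{\leq n}v$ and so $\varphi^E_v(n) \leq \varphi^F_v(n)$. The graph $F$ still satisfies the hypothesis, since adding isolated loops introduces no new simple cycles of length $\geq 2$. So I may assume from the outset that every vertex of $E$ carries a unique loop.

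Next, let $\hat{E}$ denote $E$ with all loops removed. The crucial structural observation is that $\hat{E}$ is acyclic: any shortest cycle at a vertex $v$ in $\hat{E}$ would be simple in $\hat{E}$, and since simplicity depends only on the condition $r(\alpha_i) \neq v$ for $1 \leq i < |\alpha|$, it would also be simple in $E$; but its length would be $\geq 2$, contradicting the hypothesis that simple cycles in $E$ have length $1$. Given this, every $\alpha \in E^n v$ decomposes uniquely as $\alpha = \ell^{(0)} \gamma_1 \ell^{(1)} \gamma_2 \cdots \gamma_k \ell^{(k)}$, where $\gamma := \gamma_1 \cdots \gamma_k \in \hat{E}^k$ with $r(\gamma) = v$, and each $\ell^{(j)}$ is a (possibly empty) block of consecutive uses of the loop at the appropriate vertex. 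Since each vertex has at most one loop, each $\ell^{(j)}$ is determined by its length alone.

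Finally I would count: the number of $\alpha \in E^n v$ restricting to a given $\gamma \in \hat{E}^k v$ equals the number of weak compositions of $n-k$ into $k+1$ parts, namely $\binom{n}{k}$. Because $\hat{E}$ is finite and acyclic, every path ending at $v$ is a suffix of some maximal $\hat{E}$-path in $\Delta_v$, so $|\hat{E}^k v| \leq c_v := |\Delta_v|$ for every $k$, and the lengths are bounded by $m_v$. Summing gives $\varphi^E_v(n) \leq c_v \sum_{k=0}^{m_v} \binom{n}{k} \leq c_v(m_v+1)(n+1)^{m_v}$; taking $c := \max_v c_v$ and $m := \max_v m_v$ (finite, since $E$ is finite) yields the desired uniform polynomial bound. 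The main conceptual step is the acyclicity of $\hat{E}$; once that is in hand, the remainder is a routine counting argument using the uniqueness of loops.
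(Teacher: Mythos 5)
Your proof is correct and follows essentially the same strategy as the paper's: reduce to the case where every vertex carries a loop, pass to the loop-free subgraph $\hat{E}$, observe it is acyclic, and bound path counts via the maximal paths $\Delta_v$ and their maximal length $m_v$; in fact your explicit block decomposition and binomial-coefficient count makes precise the estimate $|E^n v|\leq c_v(n+1)^{m_v+3}$ that the paper merely asserts. The only slip is that $\varphi^E_v(n)=|E^{\leq n}v|$ sums $|E^j v|$ over all $j\leq n$, so your final bound should carry an extra factor of $n+1$ (exponent $m_v+1$ rather than $m_v$), which of course does not affect the conclusion of polynomial growth.
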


\begin{proposition}\label{Growth_Graphs}
	Let $\G_E$ be the graph groupoid associated to a finite graph $E$. Then $\G_E$ has polynomial growth with respect to its canonical length function if and only if every vertex of $E$ has at most one simple cycle. 
\end{proposition}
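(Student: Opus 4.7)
This is exactly the observation already made in the paragraph preceding Lemma \ref{lem: graphs with only simple cycles and no two different cycles have simple cycles have polynomial growth}. If $v \in E^0$ carries two distinct simple cycles $\alpha,\beta$, set $K := |\alpha||\beta|$ and $\bar\alpha := \alpha^{|\beta|}$, $\bar\beta := \beta^{|\alpha|}$. The $2^n$ concatenations of $n$ symbols from $\{\bar\alpha,\bar\beta\}$ are then pairwise distinct closed paths at $v$ of length $nK$, so $\varphi^E_v(nK) \geq 2^n$; taking $x := \bar\alpha^\infty \in \partial E$ then forces $|B_{(\G_E)_x}(nK)| \geq 2^n$, which rules out polynomial growth of $\G_E$.

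\textbf{Strategy for the if direction.} Assume each vertex of $E$ admits at most one simple cycle. Since
\[
|B_{(\G_E)_x}(n)| \leq \sum_{i=0}^n \varphi^E_{s(x_{i+1})}(n-i),
\]
it suffices to exhibit constants $C,d$ with $\varphi^E_v(n) \leq C(n+1)^d$ uniformly in $v \in E^0$. Let $\hat{E}$ be the subgraph with vertex set $E^0$ and $\hat{E}^1 := E^1 \setminus \{e : e \text{ lies on some simple cycle}\}$; any cycle in $\hat{E}$ would contain a simple cycle of $E$ consisting entirely of $\hat{E}^1$-edges, so $\hat{E}$ is acyclic. Define the quotient graph $E'$ by collapsing each simple cycle $C$ of $E$ to a single vertex $w_C$ carrying one self-loop, leaving the vertices of $E$ not lying on any simple cycle unchanged, and taking the non-self-loop edges of $E'$ to be the edges of $\hat{E}^1$ with source and range projected accordingly. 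Note that the simple cycles of $E$ are then pairwise vertex-disjoint, by the one-cycle-per-vertex hypothesis.

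\textbf{Acyclicity of $E' \setminus \{\text{self-loops}\}$.} Suppose toward a contradiction that this reduced graph contains a simple cycle $\bar{e}_1 \cdots \bar{e}_l$ (necessarily $l \geq 2$, since self-loops have been excluded), with underlying $\hat{E}^1$-edges $e_1, \ldots, e_l$. For each $i$, the vertices $r(e_i)$ and $s(e_{i+1})$ belong to a common equivalence class; if that class corresponds to a simple cycle $C_i$ of $E$, insert between them the unique arc along $C_i$ of length strictly less than $|C_i|$. The resulting closed walk $W$ in $E$ visits pairwise distinct vertices across different segments (since the simple cycles of $E$ are vertex-disjoint) and pairwise distinct vertices within each segment (the inserted arcs do not complete the cycles). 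Hence $W$ is a simple cycle of $E$ that contains the edge $e_1 \in \hat{E}^1$, contradicting the definition of $\hat{E}^1$.

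\textbf{Concluding count.} Any path $\alpha = \alpha_1 \cdots \alpha_n \in E^n$ admits a unique decomposition $\alpha = \sigma_0 \gamma_1 \sigma_1 \cdots \gamma_r \sigma_r$, where $\gamma_1, \ldots, \gamma_r$ are the $\hat{E}^1$-edges of $\alpha$ in order and each $\sigma_i$ is a (possibly empty) run of consecutive cycle-edges. The skeleton $(\gamma_1, \ldots, \gamma_r)$ projects injectively to a path of length $r$ in $E' \setminus \{\text{self-loops}\}$, so the acyclicity just proved bounds both $r$ by a constant $m$ and the total number of skeletons by a constant $M$, depending only on $E$. With the skeleton and target endpoint $v$ fixed, the tuple $(k_0, \ldots, k_r) = (|\sigma_0|, \ldots, |\sigma_r|) \in \N_0^{r+1}$ must satisfy $\sum_{i=0}^r k_i = n - r$ (plus a mod-$|C|$ constraint ensuring the segments glue correctly and $\alpha$ ends at $v$), which allows at most $\binom{n}{r} \leq n^m$ choices. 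Summing over skeletons and over lengths up to $n$ yields $\varphi^E_v(n) \leq C(n+1)^{m+1}$ uniformly in $v$, and polynomial growth of $\G_E$ follows. The main obstacle is the acyclicity step, which is precisely the point where the one-simple-cycle hypothesis is essentially used.
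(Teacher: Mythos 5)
Your proof is correct, and while the ``only if'' direction coincides with the paper's (both rest on the observation, made just before Lemma \ref{lem: graphs with only simple cycles and no two different cycles have simple cycles have polynomial growth}, that two simple cycles at a vertex force $\varphi^E_v(nK)\geq 2^n$), your ``if'' direction takes a genuinely different route. The paper restricts $\G_E$ to a full clopen subset of the unit space to obtain a Kakutani-equivalent groupoid $\G_F$ in which every simple cycle has length $1$ (citing an external graph-classification result), applies Lemma \ref{lem: graphs with only simple cycles and no two different cycles have simple cycles have polynomial growth} to $\G_F$, and then transfers polynomial growth back via Lemma \ref{lem: polynomial growth stable under kakutani equivalence}. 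You instead work directly on $E$: you remove the edges lying on simple cycles, prove that the resulting ``skeleton'' graph (equivalently, the cycle-collapsed quotient $E'$ with self-loops deleted) is acyclic --- this is the step where the one-simple-cycle hypothesis enters, via the vertex-disjointness of simple cycles and the reassembly of a putative cycle of $E'$ into a simple cycle of $E$ through a forbidden edge --- and then count paths by skeleton plus run-lengths along the (deterministic) simple cycles, getting $\varphi^E_v(n)\leq C(n+1)^{m+1}$ uniformly in $v$. Your argument is self-contained (no appeal to \cite[Proposition 5.4(3)]{GraphClassification} or to stability of polynomial growth under Kakutani equivalence) and has the additional merit of bounding $|B_{(\G_E)_x}(n)|$ directly for the canonical length function $l_S$, whereas the Kakutani route a priori yields polynomial growth only with respect to \emph{some} length function and requires the remark (after Definition \ref{def: RD definition}) that word-length functions polynomially dominate all locally bounded length functions to conclude for $l_S$; the price is that your counting argument is longer and its final bookkeeping (uniqueness of the decomposition, the bound $M$ on the number of skeletons, the congruence constraints on the $k_i$) is only sketched, though all of it goes through.
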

\begin{proof}
	We have seen that if there exists a vertex $v$ with at least two different simple cycles, then $|B_{(\G_E)_x}(n)|$ has exponential growth for some $x\in \G_E^{(0)}$. Now, let us suppose that every vertex of $E$ has at most one simple cycle. Let $U$ be a maximal subset of vertices of $E^0$ such that two vertices of $U$ do not belong to the same cycle. Put $X:=\bigcup_{v\in U} Z(v)$. Then $(\G_E)|_X$ is isomorphic to $\G_F $, where $F$ is a finite graph such that every vertex of $F$ has at most one simple cycle and every simple cycle has length $1$ (see \cite[Proposition 5.4 (3)]{GraphClassification}). By Lemma \ref{lem: graphs with only simple cycles and no two different cycles have simple cycles have polynomial growth}, $\G_F$ has polynomial growth. Since $\G_E$ is Kakutani equivalent to $\G_F $, and both are ample groupoids with $\sigma$-compact unit spaces, it follows by Lemma \ref{lem: polynomial growth stable under kakutani equivalence} that $\G_E$ has polynomial growth.  
\end{proof}

\begin{example} \label{example: graph giving Lp-toeplitz}
	Let $E$ be the following graph:
	\[\begin{tikzpicture}[vertex/.style={circle, draw = black, fill = black, inner sep=0pt,minimum size=5pt}, implies/.style={double,double equal sign distance,-implies}]
		\node[vertex] (v) at (2,0) [label=above left:$v$] {};
		\node[vertex] (w) at (0,0) [label=below left:$w$] {};
		
		\path (v) edge[thick, decoration={markings, mark=at position 0.99 with {\arrow{triangle 45}}}, postaction={decorate} ] node[above left] {$e$} (w)
		edge[thick, loop, min distance = 20mm, looseness = 10, out = 45, in = 315, decoration={markings, mark=at position 0.99 with {\arrow{triangle 45}}}, postaction={decorate}]  node[above right] {$f$} (v)  ; 
	\end{tikzpicture}\]
	and let $\G_E$ the associated graph groupoid, which by Proposition \ref{Growth_Graphs} has polynomial growth. Let $p\in (1,\infty)$ and let $S\in B(\ell^p(\N))$ be the operator such that $S(e_i)=e_{i+1}$ for every $i\in \N$. Let $\mathcal{T}_p$ be the $L^p$-operator algebra generated by $S$ and its revert operator $T$. We claim that $F^p_\lambda(\G_E)$ is isometrically isomorphic to $\mathcal{T}_p$. Indeed, let $a=\chi_{Z(e,w)}+\chi_{Z(f,v)}$ and $b=\chi_{Z(w,e)}+\chi_{Z(v,f)}$ be two functions in $C_c(\G_E)$. It is straightforward to check that  $a$ and $b$ generate $C_c(\G_E)$. Now, the map $\Lambda: C_c(\G_E)\to \mathcal{T}_p$ given by $\Lambda(a)=S$ and $\Lambda(b)=T$ is an algebra homomorphism. Let $\lambda_x \colon C_c(\G_E)\to B(\ell^p((\G_E)_x))$ be the left regular representation restricted to $x\in \G_E^{(0)}$, and let $x=f^\infty\in \G_E^{(0)}$. Observe that $(\G_E)_x=\{(f^{\infty},n,f^{\infty}):n\in\Z\}$ and that   $\lambda_x(a)(e_{(f^{\infty},n,f^{\infty})})=e_{(f^{\infty},n+1,f^{\infty})}$  and $\lambda_x(b)(e_{(f^{\infty},n,f^{\infty})})=e_{(f^{\infty},n-1,f^{\infty})}$. Now let $x\in \G_E^{(0)}\setminus \{f^\infty\}=\{w,e,fe,f^2e,\ldots\}$. Then we have that $(\G_E)_x=\{(\alpha,|\alpha|-|x|,x): \alpha\in E^* \text{ such that }r(\alpha)=w \}$. Identifying $(\G_E)_x$ with $\N$ via the map $(\alpha,|\alpha|-|x|,x)\mapsto |\alpha|+1$, $\ell^p((\G_E)_x)$ is isometrically isomorphic to $\ell^p(\N)$, and with this identification we have that $\lambda_x(a)=S$ and $\lambda_x(b)=T$. But then given $\xi\in C_c(\G_E)$, we have that  
	\begin{equation*}
		\lVert\xi\rVert_{F^p_\lambda(\G_E)}=\sup\{\lVert\lambda_x(\xi)\rVert : x\in \G_E^{(0)} \}=\max\{ \lVert\lambda_{f^\infty}(\xi)\rVert, \lVert\lambda_{w}(\xi)\rVert \}.
	\end{equation*}
	But $\lVert\lambda_{w}(\xi)\rVert=\lVert\Lambda(\xi)\rVert_{\mathcal{T}_p}$. Now by \cite[Theorem 3.7]{WangWang}, we have that $\lVert\lambda_{f^\infty}(\xi)\rVert=\inf \{\lVert\Lambda(\xi)-R \rVert_{\mathcal{T}_p}: R\in K(\ell^p(\N))\}$, but then $\lVert\lambda_{f^\infty}(\xi)\rVert\leq \lVert\Lambda(\xi)\rVert_{\mathcal{T}_p}$. Therefore, $\lVert\xi\rVert_{F^p_\lambda(\G_E)}=\lVert\Lambda(\xi)\rVert_{\mathcal{T}_p}$. Thus, $\Lambda$ extends to an isometric isomorphism $\Lambda:F^p_\lambda(\G_E)\to \mathcal{T}_p$, as desired. 
	
	By Theorem \ref{thm: Polynomial growth implies all K-groups are isomorphic}, we have that the groups $K_*(\mathcal{T}_p)$ are independent of $p \in (1, \infty)$, for $\ast = 0,1$. Since we know already that $K_{0}(\mathcal{T}_2) \cong \Z$ and $K_1 (\mathcal{T}_2) = 0 $, we must have $K_{0} (\mathcal{T}_p) \cong \Z$ and $K_1 (\mathcal{T}_p) = 0 $, for all $p \in (1, \infty)$. Therefore, the above provides an alternative way to prove \cite[Theorem 4.3]{WangWang}.
\end{example}

\appendix %Uncomment this if we want the section bnumber to be Appendix A. instead of A.
\setcounter{section}{1}
\setcounter{lemma}{0}

\section*{Appendix A. The twisted case} \label{appendix:twisted-case}

In this section, we introduce reduced twisted groupoid $L^p$-operator algebras associated to normalized continuous $2$-cocycles and briefly outline how the main results of this article carry over to this more general setting. Note however that a $2$-cocycle twist is not the most general notion of a twist over a groupoid; see for example \ \cite[Chapter 11.1]{SimsNotesOnGroupoids} for details. 

\begin{definition}\label{def: normalized 2 cocycle}
	A normalized continuous $2$-cocycle on an étale groupoid $\G$ is a continuous map $\sigma \colon \G^{(2)} \to \T$ satisfying 
	\begin{itemize}
		\item[(1)] $\sigma(r(x) , x) = \sigma(x, s(x)) = 1$, for all $x \in \G$; 
		\item[(2)] $\sigma(x,y) \sigma(xy, z) = \sigma(x,yz) \sigma (y,z)$, for all $(x,y),(y,z) \in \G^{(2)}$.
	\end{itemize}
\end{definition}

\begin{definition}\label{def: cohomologous cocycles}
	Two normalized continuous $2$-cocycles $\omega$ and $\sigma$ are said to be cohomologous if there is a continuous map $\gamma \colon \G \to \T$, such that for all $(x,y) \in \G^{(2)}$, we have
	\begin{equation*}
		\sigma(x,y) \overline{\omega(x,y)} = \gamma(x) \gamma(y) \overline{\gamma(xy)} .
	\end{equation*}
\end{definition}

Given a normalized continuous $2$-cocycle $\sigma$ on an étale groupoid $\G$, we define the $\sigma$-twisted convolution algebra $C_c (\G, \sigma)$ as the associative $\C$-algebra which as a set is just $C_c (\G)$, but with $\sigma$-twisted multiplication given by 
\begin{equation*}
	f \ast_{\sigma} g (x) = \sum_{y \in \G_{s(x)}} f(xy^{-1}) g(y) \sigma(xy^{-1}, y) = \sum_{y \in \G^{r(x)}} f(y) g(y^{-1}x) \sigma(y, y^{-1}x) .
\end{equation*}
Under the $I$-norm, $C_c (\G, \sigma)$ is a normed algebra. Given $u \in \G^{(0)}$, the $\sigma$-twisted left regular representation at $u$ is denoted $\lambda_{u}^{\sigma}$ and is the contractive algebra homomorphism $\lambda_{u}^{\sigma} \colon C_c (\G, \sigma) \to B(\ell^p (\G_u))$, given by 
\begin{equation*}
	\lambda_{u}^{\sigma}(f) (\xi) (x) = \sum_{y \in \G_{u}} f(xy^{-1}) \xi(y) \sigma(x y^{-1}, y) ,
\end{equation*}
for $\xi \in \ell^p (\G_u)$ and $x \in \G_u$. The reduced $\sigma$-twisted groupoid $L^p$-operator algebra associated to $\G$ is denoted $\Fp (\G, \sigma)$ and is by definition the closure of $C_c(\G, \sigma)$ under the norm 
\begin{equation*}
	\lVert f \rVert_{\Fp(\G, \sigma)} := \sup_{u \in \G^{(0)}} \lVert \lambda_{u}^{\sigma} (f) \rVert_{B(\ell^p (\G_u))}.
\end{equation*}
Since $\bigoplus_{u \in \G^{(0)}} \lambda_{u}^{\sigma}$ is an isometric representation of $\Fp (\G, \sigma)$ on the $L^p$-space $\bigoplus_{u \in \G^{(0)}} \ell^{p}(\G_u)$, $\Fp(\G, \sigma)$ is an $L^p$-operator algebra. It is unital if and only if $\G^{(0)}$ is compact, in which case the indicator function of the unit space is the identity element. When $\sigma$ is the trivial $2$-cocycle, that is, the $2$-cocycle such that $\sigma(x,y) = 1$ for all $(x,y) \in \G^{(2)}$, twisted convolution is just the usual convolution, and $\Fp (\G, \sigma) = \Fp (\G)$. 

It is straightforward to show that $ \lVert f \rVert_{\infty} \leq \lVert f \rVert_{\Fp(\G,\sigma)}$, for all $f \in C_c (\G, \sigma)$, and so the identity on $C_c (\G, \sigma)$ extends to a contractive linear map $j_{p}^{\sigma} \colon \Fp (\G, \sigma) \to C_0 (\G)$. In fact, the map $j_{p}^{\sigma}$, like in the case of the trivial twist, takes the form 
\begin{equation*}
	j_{p}^{\sigma} (a)(x) = \langle \lambda_{s(x)}^{\sigma} (a) (\delta_{s(x)}) , \delta_{x} \rangle,
\end{equation*}
for any $a \in \Fp (\G, \sigma)$ and $x \in \G$, where $\langle \cdot, \cdot \rangle$ is the usual duality product defined in Section \ref{sec: preliminaries}. The statements in \cite[Proposition 4.7 and Proposition 4.9]{CGTRigidityResultsForLpOp} generalizes readily to give the analogous ones in the twisted case; that is, the map $j_{p}^{\sigma}$ is injective, and moreover 
\begin{equation*}
	j_{p}^{\sigma} (ab) (x) = j_{p}^{\sigma} (a) \ast_{\sigma} j_{p}^{\sigma} (b) (x) ,
\end{equation*}
for all $a,b \in \Fp(\G, \sigma)$ and $x \in \G$. Also, Lemma \ref{lem: involution extends to isometric anti-homomorphism} generalizes to the statement that the twisted involution $^\ast$, given by $f^{\ast} (x) = \overline{f(x^{-1})} \overline{\sigma(x^{-1},x)}$ for $f \in C_c (\G)$, extends to an isometric anti-isomorphism $^\ast \colon \Fp(\G, \sigma) \to F_{\lambda}^{q}(\G, \sigma)$, and $j_{q}^{\sigma}(a^{\ast}) = j_{p}^{\sigma}(a)^{\ast}$, for any $a \in \Fp(\G, \sigma)$.
\begin{definition}\label{def: twisted RD def}
	Let $\sigma$ be a normalized continuous $2$-cocycle on an étale groupoid $\G$, let $l$ be a locally bounded length function on $\G$, and let $p \in [1, \infty)$. We say $\G$ has \emph{$\sigma$-twisted property $RD_p$ with respect to $l$} if there exist $c > 0$ and $k \in \N_0$ such that 
	\begin{equation*}
		\lVert f \rVert_{\Fp(\G, \sigma)} \leq c \lVert f \rVert_{p,k} ,
	\end{equation*}
	for all $f \in C_c (\G, \sigma)$. We say $\G$ has \emph{$\sigma$-twisted property $RD_p$} if it has $\sigma$-twisted property $RD_p$ with respect to some locally bounded length function.
\end{definition}
It is straightforward to show the inequality $\lVert f \rVert_{\Fp(\G, \sigma)} \leq \lVert |f| \rVert_{\Fp (\G)}$, so that property $RD_p$ implies $\sigma$-twisted property $RD_p$ for any normalized continuous $2$-cocycle $\sigma$. Also, it follows readily from the definitions that if $\omega$ and $\sigma$ are cohomologous $2$-cocycles, then $\G$ has $\sigma$-twisted property $RD_p$ if and only if $\G$ has $\omega$-twisted property $RD_p$. Thus, if $\sigma$ is a continuous $2$-cocycle that is cohomologous to the trivial one, then $\sigma$-twisted property $RD_p$ implies $\omega$-twisted property $RD_p$ for all continuous $2$-cocycles $\omega$. It is not known to the authors whether $\sigma$-twisted property $RD_p$, for a continuous $2$-cocycle $\sigma$ that is not in the cohomology class of the trivial one, implies property $RD_p$.

Supposing an étale groupoid $\G$ is equipped with a locally bounded length function $l$ and $\sigma$ is a continuous $2$-cocycle, one can show in a completely analogous manner as in \cite[Lemma 3.3]{hou2017spectral} that $\G$ has $\sigma$-twisted property $RD_p$ if and only if $S_{p}^{l}(\G)$ is continuously included in $\Fp(\G, \sigma)$ via ${j_{p}^{\sigma}}^{-1}$. Moreover, endowing $S_{p}^{l}(\G)$ with $\sigma$-twisted convolution, $S_{p}^{l}(\G)$ becomes a Fréchet algebra if $\G$ has property $RD_p$, and, like in Proposition \ref{prop: Space of rapidly decreasing functions form a Fréchet algebra}, it may be identified as a Fréchet subalgebra of $\Fp (\G, \sigma)$ and $F_{\lambda}^{q}(\G, \sigma)$ via ${j_{p}^{\sigma}}^{-1}$ and ${j_{q}^{\sigma}}^{-1}$ respectively. It is not clear to the authors that $S_{p}^{l}(\G)$ is a Fréchet algebra under twisted convolution if $\G$ has \emph{twisted property $RD_p$}; the proof in Proposition \ref{prop: Space of rapidly decreasing functions form a Fréchet algebra} does not work in the twisted case, because it relies on an inequality of positive functions under the usual convolution product.
In Lemma \ref{lem: translation set is an algebra} the algebra $\C^p [\G_u]$ clearly contains $\lambda_{u}^{\sigma} (C_c (\G , \sigma) )$, and Lemma \ref{lem: dense spectral invariant Frechet algebra, Ji} and Lemma \ref{lem: derivation at u is closed} remain the same in the twisted case. The arguments in Theorem \ref{thm: RD_q and RD_p implies isomorphisms in K-theory} and Theorem \ref{thm: Polynomial growth implies all K-groups are isomorphic} extends in the natural manner to give the next two results. By $S_{p}^{l}(\G, \sigma)$ we mean $S_{p}^{l}(\G)$ endowed with the twisted convolution product.

\begin{proposition}\label{prop: twisted version of RDp and RDq imply isomorphism in K-theory}
	Let $\G$ be an étale groupoid which has property $RD_p$ and $RD_q$, for $p,q \in (1, \infty)$ Hölder conjugate, with respect to some continuous length function $l$, and let $\sigma$ be a continuous $2$-cocycle on $\G$. We may identify $S_{p}^{l}(\G, \sigma)$ as a Fréchet subalgebra of $\Fp(\G , \sigma)$ and $F_{\lambda}^{q}(\G, \sigma)$. Under these identifications, $S_{p}^{l}(\G, \sigma)$ is a spectral invariant Fréchet subalgebra of $F_{\lambda}^{p}(\G, \sigma) $ and $F_{\lambda}^{q}(\G, \sigma)$, and the inclusions induce isomorphisms on their $K$-theories; in particular, 
	\begin{equation*}
		K_{\ast} (F_{\lambda}^{p}(\G, \sigma)) \cong K_{\ast} (S_{p}^{l}(\G, \sigma))  \cong K_{\ast}(F_{\lambda}^{q}(\G, \sigma)).
	\end{equation*}
\end{proposition}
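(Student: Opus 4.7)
The plan is to mirror the proof of Theorem \ref{thm: RD_q and RD_p implies isomorphisms in K-theory} almost verbatim, carefully tracking where the cocycle $\sigma$ enters, and invoking the observation (already made in the appendix) that untwisted property $RD_p$ implies $\sigma$-twisted property $RD_p$ together with the fact that $|\sigma|\equiv 1$. First I would set the stage: as noted in the paragraph preceding the proposition, since $\G$ has $RD_p$ and $RD_q$, it has the corresponding $\sigma$-twisted properties, so ${j_p^\sigma}^{-1}$ and ${j_q^\sigma}^{-1}$ continuously embed $S_p^l(\G)$ into $\Fp(\G,\sigma)$ and $F_\lambda^q(\G,\sigma)$ respectively. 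The Fréchet algebra structure on $S_p^l(\G,\sigma)$ follows by repeating the proof of Proposition \ref{prop: Space of rapidly decreasing functions form a Fréchet algebra}: the key estimate $(f\ast_\sigma g)(x)(1+l(x))^n$ is bounded in absolute value by $(|f|(1+l)^n)\ast(|g|(1+l)^n)(x)$ because $|\sigma(xy^{-1},y)|=1$, and from there the argument only uses untwisted $RD_p$ and $RD_q$ via $F_\lambda^p(\G)$ and $F_\lambda^q(\G)$.

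Next I would recycle the whole derivation machinery of Theorem \ref{thm: RD_q and RD_p implies isomorphisms in K-theory} with no changes at the level of $B^p[\G_u]$: the closed derivation $\delta_{u,p}$ from Lemma \ref{lem: derivation at u is closed} is defined purely from the multiplication operator $M_l$ and does not see the cocycle. What does change is the representation; writing $\lambda_{u,p}^\sigma\colon \Fp(\G,\sigma)\to B(\ell^p(\G_u))$, a short induction yields
\begin{equation*}
    (\delta_{u,p})^k(\lambda_{u,p}^\sigma(f))(\xi)(x) = \sum_{y\in\G_u} f(xy^{-1})\xi(y)\sigma(xy^{-1},y)(l(x)-l(y))^k ,
\end{equation*}
and since $|\sigma|=1$ the pointwise estimate is identical to the untwisted case, giving
\begin{equation*}
    \bigl\|(\delta_{u,p})^k(\lambda_{u,p}^\sigma(f))\bigr\|_{B(\ell^p(\G_u))} \leq \bigl\|\lambda_{u,p}^\sigma(f^{(k)})\bigr\|_{B(\ell^p(\G_u))} \leq \|f^{(k)}\|_{\Fp(\G,\sigma)} \leq c\|f\|_{p,k+k'}
\end{equation*}
with $f^{(k)} = |f|(1+l)^k$, using $\sigma$-twisted $RD_p$ in the last step. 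Defining $S_{u,p}^\sigma(\G) = (\bigcap_k \mathrm{Dom}(\delta_{u,p}^k))\cap A_{u,p}^\sigma$, where $A_{u,p}^\sigma$ is the closure of $\lambda_{u,p}^\sigma(C_c(\G,\sigma))$ in $B(\ell^p(\G_u))$, this inequality shows $\lambda_{u,p}^\sigma(S_p^l(\G,\sigma)) \subset S_{u,p}^\sigma(\G)$ densely, so Lemma \ref{lem: dense spectral invariant Frechet algebra, Ji} makes $S_{u,p}^\sigma(\G)$ a spectral invariant Fréchet subalgebra of $A_{u,p}^\sigma$; the same construction on the $q$-side produces $S_{u,q}^\sigma(\G) \subset A_{u,q}^\sigma$.

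Finally I would close the argument as in Theorem \ref{thm: RD_q and RD_p implies isomorphisms in K-theory}: given $a\in S_p^l(\G,\sigma)$ invertible in $\Fp(\G,\sigma)$ or $F_\lambda^q(\G,\sigma)$ with inverse $b$, the fiberwise spectral invariances force $\lambda_{u,p}^\sigma(b)\in S_{u,p}^\sigma(\G)$ for every $u\in\G^{(0)}$, which controls the sums $\sum_{x\in\G_u}|b(x)|^p l(x)^{pk}$; applying the same reasoning to $a^*$ and $b^*$ via the $\sigma$-twisted involution (which exchanges $\Fp(\G,\sigma)$ and $F_\lambda^q(\G,\sigma)$ isometrically, preserves $\|\cdot\|_{p,k}$ because $|\sigma|=1$, and satisfies $j_q^\sigma(a^*)=j_p^\sigma(a)^*$) controls the sums over $\G^u$. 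The uniformity in $u$ of these norm bounds is obtained by the same inductive argument on $k$, using only that $\delta_{u,p}$ is a derivation, $\lambda_{u,p}^\sigma$ is an algebra homomorphism, and $\sup_u\|\lambda_{u,p}^\sigma(b)\|=\|b\|_{\Fp(\G,\sigma)}<\infty$; no new ingredient is needed because the cocycle appears only through unit-modulus scalars. Hence $b\in S_p^l(\G,\sigma)$, and Lemma \ref{lem: Connes lemma that spectral invariance induce isomorphism in K-theory} yields the claimed $K$-theory isomorphisms. The main obstacle — and the reason the hypotheses are stated in untwisted form — is the one the authors flag just before the proposition: without untwisted $RD_p$ one does not have the pointwise domination $|f\ast_\sigma g|\leq |f|\ast|g|$ at the right strength to make $S_p^l(\G,\sigma)$ into a Fréchet algebra, so every step has to be reduced to an untwisted $RD_p$-estimate rather than a direct twisted one.
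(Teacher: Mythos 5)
Your proposal follows the paper's intended argument exactly: the paper gives no separate proof of this proposition, saying only that the arguments of Theorem \ref{thm: RD_q and RD_p implies isomorphisms in K-theory} extend in the natural manner, and your write-up is precisely that extension, correctly isolating the points that matter --- the derivation $\delta_{u,p}$ of Lemma \ref{lem: derivation at u is closed} does not see the cocycle, the twisted involution has the needed properties, and every estimate reduces to an untwisted one because $|\sigma|\equiv 1$ (which is also why the hypotheses are stated as untwisted $RD_p$ and $RD_q$, as you note). One small slip: in your displayed chain the intermediate term should be the \emph{untwisted} operator norm $\lVert \lambda_{u,p}(f^{(k)})\rVert_{B(\ell^p(\G_u))}$ rather than $\lVert \lambda_{u,p}^{\sigma}(f^{(k)})\rVert_{B(\ell^p(\G_u))}$, since the pointwise domination $|\delta_{u,p}^k(\lambda_{u,p}^\sigma(f))(\xi)(x)|\le \lambda_{u,p}(f^{(k)})(|\xi|)(x)$ lands on an untwisted convolution of positive functions and there is no general comparison between $\lVert\lambda_{u,p}(g)\rVert$ and $\lVert\lambda_{u,p}^\sigma(g)\rVert$ even for positive $g$; one then concludes with untwisted $RD_p$ via $\lVert f^{(k)}\rVert_{F_\lambda^p(\G)}\le c\lVert f\rVert_{p,k+k'}$, so the validity of the argument is unaffected.
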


\begin{proposition} \label{prop: twisted version of polynomial growth imply all K-groups are isomorphic}
	Let $\G$ be an étale groupoid and let $\sigma$ be a continuous $2$-cocycle on $\G$. If $\G$ has polynomial growth with respect to a continuous length function, then the groups $ K_{\ast}(F_{\lambda}^{p}(\G, \sigma))$ are independent of $p \in (1,\infty)$.
\end{proposition}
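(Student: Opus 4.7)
The plan is to follow the proof of Theorem \ref{thm: Polynomial growth implies all K-groups are isomorphic} almost verbatim, making only cosmetic adjustments to account for the continuous $2$-cocycle $\sigma$. Let $l$ be a continuous length function with respect to which $\G$ has polynomial growth. First, I would invoke Proposition \ref{prop: polynomial growth implies RD_q for all q} to conclude that $\G$ has property $RD_p$ with respect to $l$ for every $p \in [1, \infty)$, and in particular for every Hölder conjugate pair $(p, q)$ with $p, q \in (1, \infty)$. Applying the twisted analog Proposition \ref{prop: twisted version of RDp and RDq imply isomorphism in K-theory} then gives
\[
K_{\ast}(F_{\lambda}^{p}(\G, \sigma)) \cong K_{\ast}(S_{p}^{l}(\G, \sigma)),
\]
for every $p \in (1, \infty)$. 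Hence it suffices to prove that the Fréchet algebras $S_{p}^{l}(\G, \sigma)$ all coincide as $p$ varies over $(1, \infty)$.

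Next I would observe that the underlying Fréchet space $S_{p}^{l}(\G)$ is defined without reference to any product structure; it is determined entirely by the norms $\{ \lVert \cdot \rVert_{p,k} \}_{k \in \N_{0}}$. For fixed $p \in (1, \infty)$ with Hölder conjugate $q$, I can reproduce the argument appearing in Theorem \ref{thm: Polynomial growth implies all K-groups are isomorphic}: the inclusion $S_{p}^{l}(\G) \subseteq S_{1}^{l}(\G)$ follows from Hölder's inequality together with the polynomial bound
\[
\sup_{u \in \G^{(0)}} \sum_{x \in \G_u} (1 + l(x))^{-qk} \leq c
\]
provided by Proposition \ref{prop: polynomial growth implies RD_q for all q}, while the reverse continuous inclusion $S_{1}^{l}(\G) \subseteq S_{p}^{l}(\G)$ is contained in Lemma \ref{lem: inclusion of Frechet spaces Slp and equality for RDp, p > 2}. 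Neither step refers to the multiplication on $C_c(\G)$, so both carry over verbatim to the twisted situation.

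Having identified all the underlying Fréchet spaces, I would equip this common space with the twisted convolution $\ast_{\sigma}$. The twisted analog of Proposition \ref{prop: Space of rapidly decreasing functions form a Fréchet algebra}, used already inside the twisted version of our main theorem, ensures that $\ast_{\sigma}$ is jointly continuous with respect to each of the seminorm families $\{ \lVert \cdot \rVert_{p,k} \}_{k \in \N_{0}}$. Consequently $S_{p}^{l}(\G, \sigma)$ and $S_{r}^{l}(\G, \sigma)$ coincide as Fréchet algebras for all $p, r \in (1, \infty)$, which combined with the isomorphism displayed above yields independence of the $K$-groups $K_{\ast}(F_{\lambda}^{p}(\G, \sigma))$ from $p \in (1, \infty)$.

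I do not expect a genuine obstacle. The only point that required separate attention in the untwisted case, namely joint continuity of the convolution on $S^{l}_{p}(\G)$, is already absorbed into the cited twisted versions of Proposition \ref{prop: Space of rapidly decreasing functions form a Fréchet algebra} and Proposition \ref{prop: twisted version of RDp and RDq imply isomorphism in K-theory}; the independence-of-$p$ argument is purely a statement about the underlying Fréchet \emph{space} and does not interact with the cocycle at all.
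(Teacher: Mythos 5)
Your proposal is correct and is precisely the ``natural extension'' of the proof of Theorem \ref{thm: Polynomial growth implies all K-groups are isomorphic} that the paper alludes to without writing out: untwisted $RD_p$ for all $p$ from polynomial growth, the twisted analogue of Theorem \ref{thm: RD_q and RD_p implies isomorphisms in K-theory} to pass to $S_p^l(\G,\sigma)$, and the product-independent identification $S_p^l(\G)=S_1^l(\G)$ of the underlying Fréchet spaces. You also correctly rely on untwisted (rather than merely twisted) $RD_p$ to get joint continuity of $\ast_\sigma$, which is exactly the caveat the appendix flags, so there is nothing to add.
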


\begin{example}{($L^p$-Noncommutative Torus)}
	Fix a real skew-symmetric $n\times n$ matrix $\Theta$, and set 
	\begin{equation*}
		\begin{split}
			\sigma \colon \Z^n \times \Z^n &\to \T \\
			(v,w) &\mapsto e^{\pi i v \cdot \Theta w}
		\end{split}
	\end{equation*} 
	for all $v,w \in \Z^n$. Then $\sigma$ is a (continuous) $2$-cocycle on the group $\Z^n$. Due to amenability of $\Z^n$, the noncommutative $n$-torus determined by $\Theta$ can be realized as $C^*_r (\Z^n , \sigma)$. We may then define the $L^p$-noncommutative $n$-torus determined by $\Theta$ as $\Fp (\Z^n, \sigma)$. It is well-known that $K_0 (C^*_r (\Z^n , \sigma)) \cong \Z^{2^{n-1}} \cong K_1 (C^*_r (\Z^n , \sigma))$, see \cite{RieffelCaseStudy}. Since $\Z^n$ has polynomial growth, it follows from Proposition \ref{prop: twisted version of polynomial growth imply all K-groups are isomorphic} that
	\begin{equation*}
		\begin{split}
			K_{0} (F_{\lambda}^{p}(\Z^n, \sigma)) \cong K_{0}(C^*_r (\Z^n, \sigma)) \cong \Z^{2^{n-1}}
		\end{split}
	\end{equation*}
	and 
	\begin{equation*}
		\begin{split}
			K_{1} (F_{\lambda}^{p}(\Z^n, \sigma)) \cong K_{1}(C^*_r (\Z^n, \sigma)) \cong \Z^{2^{n-1}}
		\end{split}
	\end{equation*}
	for all $p \in (1, \infty)$. Note that we are not claiming that the isomorphism for $K_0$ is order-preserving.
\end{example}

\printbibliography

\end{document}